\numberwithin{equation}{section}
\newcommand{\N}{{\mathbb N}}
\newcommand{\R}{{\mathbb R}}
\newcommand{\RR}{{\mathbb R}^2}
\newcommand{\eps}{\varepsilon}
\renewcommand{\div}{\operatorname{div}}
\newcommand{\curl}{\operatorname{curl}}
\newcommand{\loc}{\operatorname{{loc}}}
\newcommand{\supp}{\operatorname{{supp}}}
\renewcommand{\leq}{\leqslant}
\renewcommand{\geq}{\geqslant}
\newtheorem{theorem}{Theorem}[section]
\newtheorem{proposition}[theorem]{Proposition}
\newtheorem{lemma}[theorem]{Lemma}
\newtheorem{corollary}[theorem]{Corollary}
\theoremstyle{definition}
\newtheorem{definition}[theorem]{Definition}
\theoremstyle{remark}
\newtheorem{remark}[theorem]{Remark}
\newtheorem*{remark*}{Remark}
\title{The vortex-wave system with gyroscopic effects}
\author[C. Lacave]{Christophe Lacave}
\address[C. Lacave]{CNRS-Universit\'e Grenoble Alpes \\ Institut Fourier UMR 5582\\ 100, rue des math\'ematiques, 38610 Gi\`eres, France} 
\email{christophe.lacave@univ-grenoble-alpes.fr}
 \author[E. Miot]{Evelyne Miot}
\address[E. Miot]{CNRS-Universit\'e Grenoble Alpes \\ Institut Fourier UMR 5582\\ 100, rue des math\'ematiques, 38610 Gi\`eres, France} \email{evelyne.miot@univ-grenoble-alpes.fr}
\keywords{Euler equations, small body, uniqueness}
\subjclass{35Q35, 76B03}
\date{\today}
\begin{abstract}
In this paper, we study the well-posedness for a coupled PDE/ODE system describing the interaction of several massive point vortices moving within a vorticity backgound in a 2D ideal incompressible fluid. The points are driven by the velocity induced by the background vorticity, by the other vortices, and by a Kutta-Joukowski-type lift force creating an additional gyroscopic effect. This system reduces to the so-called vortex-wave system, introduced by Marchioro and Pulvirenti \cite{MarPul1, MarPul}, when the point vortices are massless. 

On the one hand, we establish existence of a weak solution before the first collision. We show moreover that the background vorticity is 
transported by the flow associated to the total velocity field. 
On the other hand, we establish uniqueness in the case where the vorticity is initially constant in a neighborhood of the point vortices. When all the densities of the point vortices have the same sign, no collision occurs in finite time and our results are then global in time. Our proofs strongly rely on the definition of a suitable energy functional.
\end{abstract}
\begin{document}

\maketitle

\section{Introduction}

The purpose of this article is to investigate the well-posedness of the following PDE/ODE system:
\begin{equation}\label{syst:1}
\begin{cases}
\displaystyle \partial_t \omega+ \div (v \omega )=0,\\
\displaystyle v=u + \sum_{k=1}^N \frac{\gamma_{k}}{2\pi} \frac{(x-h_{k})^\perp}{|x-h_{k}|^2},\quad u = K\ast \omega, \quad K(x)=\frac{1}{2\pi} \frac{x^\perp}{|x|^2},\\
\displaystyle m_{k}\ddot{h}_{k}=\gamma_{k}\Big(\dot{h}_{k}- u(t,h_{k}) -\sum_{j\neq k} \frac{\gamma_{j}}{2\pi} \frac{(h_{k}-h_{j})^\perp}{|h_{k}-h_{j}|^2}\Big)^\perp \text{ for } k=1,\dots,N,
\end{cases}
\end{equation}
where 
$$\omega : [0,T]\times \RR\to \R, \quad h_{k}:[0,T]\to \RR \text{ for } k=1,\dots,N,$$ 
and where 
$$(m_{k},\gamma_{k}) \in \R^+_{*}\times \R \text{ for } k=1,\dots,N.$$
We supplement \eqref{syst:1} with the initial conditions
\begin{equation}\label{syst:2}
\begin{split}
&\omega(0,\cdot)=\omega_0\in L^\infty(\R^2) \text{ compactly supported in some }B(0,R_0),\\
& (h_{k},h'_{k})(0)=(h_{k,0},\ell_{k,0})\text{ for } k=1,\dots,N, \text{ with $h_{k,0}$ distincts}.
\end{split}
\end{equation}
\medskip

System \eqref{syst:1} for $N=1$ was derived by Glass, Lacave and Sueur \cite{GLS1} as an asymptotical system for the dynamics of a body immersed in a 2D perfect incompressible fluid, when the size of the body vanishes whereas the mass is assumed to be constant. The position of the body at time $t$ is represented by the position $h(t)$, the fluid is described by its divergence-free velocity $ u(t,x)$ and vorticity $\omega(t,x)=\curl u(t,x)$. Under suitable decay assumptions, the divergence free condition enables to recover the velocity explicitly in terms of the vorticity by the Biot-Savart law \cite{MarPul}: $ u=x^\perp/(2\pi |x|^2)\ast \omega$. The quantities $m$ and $\gamma$ are reminiscent of the mass of the body and of the circulation of the velocity around the body, respectively. The second order differential equation verified by $h$ means that the body is accelerated by a force that is orthogonal to the difference between the body speed and the fluid velocity at this point. This gyroscopic force is similar to the well-known Kutta-Joukowski-type lift force revealed in the case of a single body in an irrotational unbounded flow, see for instance \cite{Lamb,MarPul,Thomson}. Therefore, a byproduct of \cite{GLS1} is the existence of a global weak solution of \eqref{syst:1} when $N=1$.

In the case $N> 1$, it is not known whether the previous convergence result holds. The main goal of this paper is to establish the existence and the uniqueness (under an additional assumption on the initial data, see below) of solutions for any $N\geq 1$. In particular, we will prove that the trajectories of the points $h_k$ never collide if all the circulations $\gamma_{k}$ have the same sign. Such a result is important for example to justify the 2D spray inviscid model established by Moussa and Sueur \cite{MoussaSueur}, which was derived as a mean-field limit $N\to \infty$ of \eqref{syst:1}. We refer to that article for a comparison of the recent spray models introduced in the literature.

Before giving the precise statements of our theorems, we mention that \eqref{syst:1} reduces to the so-called vortex-wave system when setting $m_k=0$:
\begin{equation}\label{syst:wv}
\begin{cases}
\displaystyle \partial_t \omega+ \div (v \omega )=0,\\
\displaystyle v= u + \sum_{k=1}^N \frac{\gamma_{k}}{2\pi} \frac{(x-h_{k})^\perp}{|x-h_{k}|^2},\quad u = K\ast \omega, \quad K(x)=\frac{1}{2\pi} \frac{x^\perp}{|x|^2},\\
\displaystyle \dot{h}_{k} = u(t,h_{k}) +\sum_{j\neq k} \frac{\gamma_{j}}{2\pi} \frac{(h_{k}-h_{j})^\perp}{|h_{k}-h_{j}|^2} \text{ for } k=1,\dots,N.
\end{cases}
\end{equation}
And indeed, for $N=1$, Glass, Lacave and Sueur showed in \cite{GLS2} that the asymptotical dynamics of a small solid with vanishing mass evolving in a 2D incompressible fluid is governed by the vortex-wave system. The vortex-wave system was previously derived by Marchioro and Pulvirenti \cite{MarPul1, MarPul} to describe the interaction of a background vorticity $\omega$ interacting with one or several point vortices $h_k$ with circulations $\gamma_k$. Very recently, Nguyen and Nguyen have also justified the vortex-wave system as the inviscid limit of the Navier-Stokes equations \cite{Toan}. For System~\eqref{syst:wv}, existence of a weak solution (according to Definition~\ref{def:1} below) is proved up to the first collision time between the vortex trajectories. Concerning uniqueness, it is open in general, and it holds in the particular case when the vorticity $\omega$ is initially constant near the point vortices (namely the condition appearing in Theorem~\ref{theorem:main-several} below), as suggested in \cite{MarPul1,MarPul} and proved in \cite{LM,MiotThesis}. It is also proved in \cite{MarPul1} that if all the $\gamma_k$ have the same sign then no collision occurs in finite time therefore global existence holds. 

As for the spray model, these results are the first key to get a time of existence that is independent of $N$, in order to consider the homogenized limit (or mean-field limit) $N\to \infty$, for instance, used by Schochet \cite{Schochet} to justify the vortex method in $\R^2$. The main goal of this paper is to establish the corresponding existence and uniqueness results for the vortex-wave system with gyroscopic effects \eqref{syst:1}. From now on we will refer to the points $h_k$ in \eqref{syst:1} as ``massive'' point vortices.

\subsection*{Main results}

The first part of our analysis focuses on the existence issue for \eqref{syst:1}.

\begin{definition}\label{def:1}
 Let $T>0$. We say that $(\omega,\{h_k\}_{1\leq k\leq N})$ is a weak solution of \eqref{syst:1} on $[0,T]$, with initial data given by \eqref{syst:2}, if:
\begin{itemize}
\item $\omega\in L^\infty([0,T],L^1\cap L^\infty(\R^2))\cap C([0,T],L^\infty(\R^2)-w^\ast)$, $h_k \in C^2([0,T])$ for $k=1,\ldots,N$,
\item the PDE in \eqref{syst:1} is satisfied in the sense of distributions, and the ODEs in \eqref{syst:1} are satisfied in the classical sense. 
\end{itemize}
\end{definition}

\begin{theorem}\label{theorem:main-several-existence}
Let $\omega_0$ and $(\{h_{k,0}\},\{\ell_{k,0})\})$ be as in \eqref{syst:2}. There exists $T_{*}>0$ such that for any $T\in (0,T_{*})$, 
there exists a weak solution $(\omega,\{h_k\})$ to \eqref{syst:1} on $[0,T]$. Moreover, if we assume that $\gamma_k$ have the same sign for all $k=1,\ldots,N$, then $T_{*}=+\infty$. 
\end{theorem}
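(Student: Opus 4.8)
The plan is to obtain $(\omega,\{h_k\})$ as a limit of solutions of a regularized system, the decisive tool being a conserved energy functional that, when all the $\gamma_k$ have the same sign, rules out any degeneracy of the dynamics on an arbitrary time interval. For $\eps>0$ one mollifies $\omega_0$ into $\omega_0^\eps\in C^\infty_c$ with $\|\omega_0^\eps\|_{L^p}\leq 2\|\omega_0\|_{L^p}$ for $1\leq p\leq\infty$, and replaces the kernel $z\mapsto z^\perp/|z|^2$ occurring in the point-vortex part of $v$ and in the ODEs by a smooth bounded odd truncation $K^\eps$ coinciding with it on $\{|z|\geq\eps\}$ and satisfying $|K^\eps(z)|\leq 1/|z|$. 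Then $v^\eps=K\ast\omega^\eps+\sum_k\tfrac{\gamma_k}{2\pi}K^\eps(\cdot-h_k^\eps)$ is Lipschitz in space, so its flow is well defined and transports $\omega^\eps$, while the right-hand sides of the (now smooth) ODEs are locally Lipschitz; the coupled system thus has a global solution $(\omega^\eps,\{h_k^\eps\})$. Since $v^\eps$ is divergence free, $\|\omega^\eps(t)\|_{L^p}=\|\omega_0^\eps\|_{L^p}$ and $\|u^\eps(t)\|_{L^\infty}\leq c_0:=C\|\omega_0\|_{L^1}^{1/2}\|\omega_0\|_{L^\infty}^{1/2}$ for all $t$. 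As $|K^\eps(z)|\leq 1/|z|$ one also has $\int|\omega^\eps(t,x)|\,|x-h_k^\eps(t)|^{-1}\,dx\leq C\|\omega_0\|_{L^1}^{1/2}\|\omega_0\|_{L^\infty}^{1/2}$, hence, from $\partial_t|\omega^\eps|+\div(v^\eps|\omega^\eps|)=0$, the first moment $M_1(t):=\int|\omega^\eps(t,x)||x|\,dx$ obeys $M_1'(t)\leq\int|v^\eps||\omega^\eps|\leq C$, i.e. it grows at most linearly, uniformly in $\eps$. For the points, $\big|\tfrac{d}{dt}|h_i^\eps-h_j^\eps|\big|\leq|\dot h_i^\eps|+|\dot h_j^\eps|$ and $|\ddot h_k^\eps|\leq\tfrac{|\gamma_k|}{m_k}\big(|\dot h_k^\eps|+c_0+\sum_{j\neq k}\tfrac{|\gamma_j|}{2\pi|h_k^\eps-h_j^\eps|}\big)$, so on any interval where $\min_{i\neq j}|h_i^\eps-h_j^\eps|\geq\rho$ a Gr\"onwall argument bounds $|\dot h_k^\eps|,|h_k^\eps|$ in terms of $\rho$ and the data, and a standard continuity argument then yields $\tau_0>0$, depending only on the data, such that $\min_{i\neq j}|h_i^\eps-h_j^\eps|\geq\tfrac12\min_{i\neq j}|h_{i,0}-h_{j,0}|$ on $[0,\tau_0]$ for all small $\eps$; on $[0,\tau_0]$ all of $\|\omega^\eps\|_{L^1\cap L^\infty}$, $M_1$, $|h_k^\eps|$, $|\dot h_k^\eps|$, $|\ddot h_k^\eps|$ are then bounded uniformly in $\eps$.

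\emph{The energy functional.} With $G_0(z)=-\tfrac1{2\pi}\log|z|$ (so that $K=-\nabla^\perp G_0$) and $G_0^\eps$ a regularization matched to $K^\eps$, set
\[
\mathcal E^\eps(t)=\sum_{k}\frac{m_k}{2}|\dot h_k^\eps(t)|^2+\frac12\iint\omega^\eps(t,x)\omega^\eps(t,y)G_0^\eps(x-y)\,dx\,dy+\sum_k\gamma_k\int\omega^\eps(t,x)G_0^\eps(x-h_k^\eps(t))\,dx+\sum_{i<j}\gamma_i\gamma_j\,G_0^\eps(h_i^\eps(t)-h_j^\eps(t)).
\]
Differentiating, using the transport equation together with the identity $\nabla^\perp\Phi^\eps\cdot\nabla\Phi^\eps=0$ (with $\Phi^\eps$ the regularized stream potential, so that only the point-vortex part of $v^\eps$ contributes to the derivative of the fluid self-energy) and the ODEs written as $m_k\ddot h_k^\eps\cdot\dot h_k^\eps=-\gamma_k(w_k^\eps)^\perp\cdot\dot h_k^\eps=-\gamma_k\dot h_k^\eps\cdot\nabla\Psi_k^\eps(h_k^\eps)$, where $w_k^\eps$ is the regular part of $v^\eps$ at $h_k^\eps$ and $\Psi_k^\eps(x)=(G_0^\eps\ast\omega^\eps)(x)+\sum_{j\neq k}\gamma_jG_0^\eps(x-h_j^\eps)$, all the cross terms cancel, so $\mathcal E^\eps$ is conserved (for a well-chosen regularization; otherwise the same computation gives $|\mathcal E^\eps(t)-\mathcal E^\eps(0)|=O(\eps)$ on bounded intervals, which is all that is needed). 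Since $\mathcal E^\eps(0)\to\mathcal E(0)<\infty$, $\mathcal E^\eps(t)$ is bounded uniformly in $\eps$ and $t$ on bounded intervals. Suppose now that all $\gamma_k$ share the same sign, so $\gamma_i\gamma_j>0$ whenever $i\neq j$. Then $-\sum_{i<j}\gamma_i\gamma_jG_0^\eps(h_i^\eps-h_j^\eps)=\sum_{i<j}\tfrac{\gamma_i\gamma_j}{2\pi}\log|h_i^\eps-h_j^\eps|\leq C+C\max_k|h_k^\eps|$, while the two fluid integrals in $\mathcal E^\eps$ are bounded by $C(1+t)+C\sum_k|h_k^\eps|$ using $M_1$ and $\|\omega^\eps\|_{L^1\cap L^\infty}$, and $\tfrac{m_k}2|\dot h_k^\eps|^2\geq0$. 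Isolating $|\dot h_k^\eps|^2$ from the energy identity gives $|\dot h_k^\eps|^2\leq C(t)(1+\sqrt Y)$ with $Y(t):=\sum_k|h_k^\eps(t)|^2$, and since $\tfrac{d}{dt}|h_k^\eps|^2\leq|h_k^\eps|^2+|\dot h_k^\eps|^2$ we obtain $Y'\leq C(t)(1+Y)$, hence $Y$, and thus every $|h_k^\eps(t)|$, is bounded uniformly in $\eps$ on each $[0,T]$. Feeding this back, $|\dot h_k^\eps(t)|$ and the fluid integrals become bounded; finally, isolating a single pair $\gamma_i\gamma_jG_0^\eps(h_i^\eps-h_j^\eps)$ in the energy identity and bounding all remaining terms from above (for the other vortex pairs one uses $|h_a^\eps-h_b^\eps|\leq 2\sqrt Y\leq C$) gives $-\log|h_i^\eps(t)-h_j^\eps(t)|\leq C(T)$, i.e. $\min_{i\neq j}|h_i^\eps(t)-h_j^\eps(t)|\geq e^{-C(T)}>0$ on $[0,T]$, uniformly in $\eps$. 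Hence, in the same-sign case, all the uniform bounds of the previous step hold on every $[0,T]$, with $|\ddot h_k^\eps|\leq C(T)$ as well.

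\emph{Passage to the limit and conclusion.} On $[0,\tau_0]$ in general, and on $[0,T]$ for every $T$ in the same-sign case, $\{h_k^\eps\}$ is bounded in $C^2$ and $\{\omega^\eps\}$ in $L^\infty(L^1\cap L^\infty)$; moreover $u^\eps=K\ast\omega^\eps$ is uniformly log-Lipschitz in space and, via $\partial_t\omega^\eps=-\div(v^\eps\omega^\eps)$, equicontinuous in time. Extracting a subsequence, $h_k^\eps\to h_k$ in $C^1$ (with the limits still pairwise distinct and $\dot h_k$ Lipschitz), $\omega^\eps\rightharpoonup\omega$ weak-$\ast$ in $L^\infty(L^\infty)$, and $u^\eps\to u:=K\ast\omega$ strongly in $C_\loc$. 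One then passes to the limit in the ODEs — their right-hand sides converge uniformly because $h_k^\eps\to h_k$ uniformly with distinct limits, $u^\eps(\cdot,h_k^\eps)\to u(\cdot,h_k)$, and $K^\eps(h_k^\eps-h_j^\eps)\to(h_k-h_j)^\perp/|h_k-h_j|^2$ — so $h_k\in C^2$ and the ODEs hold classically; and in the PDE, writing $v^\eps\to v$ strongly in $L^1_\loc$ (dominated convergence for the point-vortex part, since $|h_k^\eps|$ is bounded and $|K^\eps|\leq1/|\cdot|$) and pairing with $\omega^\eps\rightharpoonup\omega$ weak-$\ast$. The limit $(\omega,\{h_k\})$ satisfies Definition~\ref{def:1}, with $\omega\in C([0,T],L^\infty-w^\ast)$ again from $\partial_t\omega=-\div(v\omega)$. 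Letting $T_*$ be the supremum of the times up to which such a solution with pairwise-distinct points can be built yields $T_*>0$, and the previous step shows $T_*=+\infty$ when all $\gamma_k$ have the same sign. The main difficulty lies in the energy identity: establishing the exact (or $O(\eps)$) cancellation between the time derivatives of the fluid self-energy, of the fluid–vortex and vortex–vortex interaction energies, and of the kinetic energy $\sum_k\tfrac{m_k}{2}|\dot h_k^\eps|^2$ — which dictates a compatible regularization of $K$ and $G_0$ — and then exploiting it through the correctly ordered chain of a priori bounds (moment $\Rightarrow$ positions by Gr\"onwall $\Rightarrow$ velocities $\Rightarrow$ separations) that turns conservation of energy into the $\eps$-uniform non-collision estimate underlying the global statement.
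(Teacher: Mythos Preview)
Your proof is essentially correct, but it proceeds by a genuinely different route from the paper.

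\textbf{The paper's approach.} The paper uses an \emph{iterative} scheme rather than a regularization: given $(\omega_{n-1},\{h_{k,n-1}\})$, one solves the \emph{linear} transport equation for $\omega_n$ with velocity built from $u_{n-1}$ and $\{h_{k,n-1}\}$, and independently the ODE system for $\{h_{k,n}\}$ with $u_{n-1}$ frozen. For the same-sign case, the key functional is
\[
\mathcal{H}_n(t)=\sum_{j\neq k}\frac{\gamma_j\gamma_k}{2\pi}\ln|h_{j,n}-h_{k,n}|-\sum_k m_k|\dot h_{k,n}|^2,
\]
which involves only the point vortices --- no fluid terms. It is \emph{not} conserved, but the ODE gives $\dot{\mathcal H}_n=2\sum_k\gamma_k\dot h_{k,n}\cdot u_{n-1}(h_{k,n})^\perp$, hence $|\dot{\mathcal H}_n|\leq C\sum_k|\dot h_{k,n}|$ directly. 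The bootstrap $|\dot h_{k,n}|\lesssim|\mathcal H_n|^{1/2}+\big(\max|h_{k,n}|\big)^{1/2}$ then closes without any moment bound on $\omega_n$ and with no $\eps$-error to absorb.

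\textbf{Comparison and a point to watch.} Your regularization of the full nonlinear system and your use of the \emph{total} energy $\mathcal E^\eps$ (fluid self-energy, fluid--vortex and vortex--vortex interaction, kinetic) are more physical --- this is precisely the conserved quantity the paper records later as $\mathcal H_0$ in its Proposition~5.1 --- but this forces you to control the fluid integrals via the first moment $M_1$, an extra step the paper avoids. More importantly, with your chosen regularization ($u^\eps=K\ast\omega^\eps$ but $K^\eps$ in the point-vortex terms), your identity $w_k^\eps=-\nabla^\perp\Psi_k^\eps(h_k^\eps)$ is only true up to $(K-K^\eps)\ast\omega^\eps=O(\eps)$, so the derivative of $\mathcal E^\eps$ contains a term of order $\eps\sum_k|\dot h_k^\eps|$. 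Since $|\dot h_k^\eps|$ is exactly what you are trying to bound from $\mathcal E^\eps$, the claim ``$|\mathcal E^\eps(t)-\mathcal E^\eps(0)|=O(\eps)$ on bounded intervals'' needs a continuity argument (or simply take $u^\eps=K^\eps\ast\omega^\eps$ so the cancellation is exact). Either fix is routine, but as written this step is where a reader would hesitate. The paper's simpler functional sidesteps the issue entirely.
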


\begin{remark*}
The maximal time $T_{*}$ such that Theorem~\ref{theorem:main-several-existence} holds corresponds to the first collision between some of the massive points, and we will prove that no collision occurs in finite time if all the $\gamma_k$ have the same sign.
\end{remark*}

\begin{remark*}
If the initial vorticity $\omega_0$ was only assumed to be in $L^p_c(\RR)$ for some $p>2$, then one could still prove (global if all $\gamma_k$ have the same sign) existence of a weak solution to \eqref{syst:1} such that $\omega\in L^\infty( L^p)$. However in this case no uniqueness result is known.
\end{remark*} 

As already said, the same existence result is known to hold for the vortex-wave system \eqref{syst:wv}, see \cite{MarPul1}. The proof of Theorem~\ref{theorem:main-several-existence}, given in Section~\ref{sec:existence}, follows the same method as in \cite{MarPul1}, namely passing to the limit in an iterative scheme after establishing uniform estimates on the solution $(\omega_n, \{h_{k,n}\})$. To do so, we introduce a functional $\mathcal{H}_n$ in \eqref{def-Hn}. This functional is well-adapted to System~\eqref{syst:1} because it controls both the minimal distance between the vortex trajectories and the velocities; moreover, it can be shown that its time derivative is uniformly bounded. Except for the estimates we perform for this new functional $\mathcal{H}_n$, the proof of Theorem~\ref{theorem:main-several-existence} is quite straightforward and is not the main point of this paper.
 
Our next result is that any weak solution as in Theorem~\ref{theorem:main-several-existence} is actually transported by the regular Lagrangian flow relative to the total velocity field. We refer to the recent papers \cite{ambrosio, ambrosio-survey, crippa-delellis, bresiliens-miot} for the subsequent definition of regular Lagrangian flow:
 \begin{definition}\label{def:lagrangian-flow}
 Let $T>0$ and let $v\in L_{\text{loc}}^1([0,T]\times \R^2)$. We say that $X:[0,T]\times \R^2\times\R^2$ is a regular Lagrangian flow relative to $v$ if
 \begin{itemize}
 \item For a.e. $x\in \R^2$, the map $t\mapsto X(t,x)$ is an absolutely continuous solution to the ODE $\frac{d}{dt} X(t,x)=v(t,X(t,x))$ with $X(0,x)=x$, i.e. a continuous function verifying $X(t,x)=x+\int_{0}^t v(s,X(s,x)) \, ds$ for all $t\in [0,T]$;
 \item There exists a constant $L>0$ independent of $t$ such that
 $$\mathcal{L}^2 (X(t,\cdot)^{-1}(A)) \leq L\mathcal{L}^2 (A), \quad \forall t\in [0,T], \forall A \text{ Borel set of }\R^2,$$
 where $\mathcal{L}^2$ is the Lebesgue measure on $\R^2$.
 \end{itemize}

 \end{definition}
Such a definition is intended to generalize the classical notion of flow associated to smooth vector fields. It was proved by Ambrosio \cite{ambrosio} that such flow exists and is unique under BV space regularity for the vector field. In \cite{LM, bresiliens-miot}, 
a similar result was established for vector fields composed of a smooth part and of a part with a specific localized singularity that is explicit. In the present setting, where the total velocity field in \eqref{syst:1} contains singularities created by the point vortices, we will rely on those last results to establish the following general result.
\begin{theorem}\label{theorem:main-several-existence-lagrangian}
Let $\{h_{k}\}$ be any given maps belonging to $W^{2,\infty}([0,T];\RR)$ without collision: 
$$\min_{k\neq p} \min_{t\in [0,T]} |h_{k}(t)-h_{p}(t)|\geq\rho>0.$$
For $\omega_{0} \in L^\infty_{c}(\R^2)$, let $\omega$ be a weak solution on $[0,T]$ (in the sense of Definition~\ref{def:1}) to 
\begin{equation}\label{syst:1-2}
\begin{cases}
\displaystyle \partial_t \omega+ \div (v \omega )=0,\\
\displaystyle v= u + \sum_{k=1}^N \frac{\gamma_{k}}{2\pi} \frac{(x-h_{k})^\perp}{|x-h_{k}|^2},\quad u = K\ast \omega, \quad K(x)=\frac{1}{2\pi} \frac{x^\perp}{|x|^2},\\
\end{cases}
\end{equation}
such that $\omega(0,\cdot)=\omega_0$. Then, there exists a unique regular Lagrangian flow $X$ relative to the total velocity field $v$ and $\omega$ is transported by this flow: 
\[\omega(t,\cdot)=X(t,\cdot)_\#\omega_0.\] 

Moreover, the vorticity $\omega(t,\cdot)$ is compactly supported in $B(0,R_T)$ for all $t\in [0,T]$, where $R_T$ depends on $T$, on $\|h_k\|_{L^{\infty}([0,T])}$ and on the initial data.

Furthermore, we have the additional non collision information: 
$$ \text{for a.e. } x\in \RR,\quad X(t,x)\neq h_k(t), \quad\forall t\in [0,T], \quad \forall k=1,\ldots,N.$$

Finally, if we assume 
\begin{equation}\label{constant}
 \omega_0=\alpha_k\quad \text{on }B(h_{k}(0),\delta_0),\quad \forall k=1,\ldots,N
\end{equation}
for some $\alpha_k\in \R$ and $\delta_0>0$, there exists a positive $\delta$ depending only on 
$T$, $\delta_0$, $\|\omega_0\|_{L^\infty}$, $\|h_k\|_{W^{2,\infty}([0,T])}$ and $R_0$, such that
$$ \omega(t,\cdot)=\alpha_k\quad \text{on }B(h_k(t),\delta),\quad \forall t\in [0,T].$$
\end{theorem}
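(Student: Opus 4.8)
The plan is to regard \eqref{syst:1-2} as a linear transport equation driven by the prescribed divergence-free field $v=u+v_S$, where $u=K\ast\omega$ and $v_S(t,x)=\sum_{k}\frac{\gamma_k}{2\pi}\frac{(x-h_k(t))^\perp}{|x-h_k(t)|^2}$ is the explicit, localized singular part. Since $\omega\in L^\infty([0,T],L^1\cap L^\infty(\RR))$, classical Biot--Savart estimates show that $u$ is bounded and log-Lipschitz in space, uniformly in $t\in[0,T]$, while $v_S$ is smooth off the curves $t\mapsto h_k(t)$, which are Lipschitz (being in $W^{2,\infty}$) and pairwise at distance $\geq\rho$. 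Thus $v$ lies in the class treated in \cite{LM,bresiliens-miot} — a log-Lipschitz field plus finitely many explicit point-vortex singularities along Lipschitz trajectories — and I would invoke their theory (with routine bookkeeping to accommodate several moving singularities) to obtain the unique regular Lagrangian flow $X$ and the renormalization/transport property, whence $\omega(t,\cdot)=X(t,\cdot)_\#\omega_0$.

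For the remaining statements I would use repeatedly that $\frac{(x-h_k)^\perp}{|x-h_k|^2}$ is orthogonal to $x-h_k$, so that along a.e. trajectory $\frac{d}{dt}|X(t,x)-h_k(t)|^2=2(X-h_k)\cdot\big(u+\sum_{j\neq k}\frac{\gamma_j}{2\pi}\frac{(X-h_j)^\perp}{|X-h_j|^2}-\dot h_k\big)(t,X)$; in particular $t\mapsto|X(t,x)-h_k(t)|$ is $M$-Lipschitz as long as $X(t,x)$ stays in a fixed neighbourhood of $h_k$, with $M$ depending only on $\|u\|_{L^\infty}$, the $\gamma_j$, $\rho$ and $\|\dot h_k\|_{L^\infty}$. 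Combined with $|\dot X|\leq C$ when $X$ is bounded away from all the $h_j$, this forces at most linear growth of $|X(t,x)|$ and hence $\supp\omega(t,\cdot)\subseteq B(0,R_T)$. For the a.e.\ non-collision statement I would argue by covering: if $X(t^*,x)=h_k(t^*)$ then the $M$-Lipschitz bound gives $|X(s,x)-h_k(s)|\leq M|s-t^*|$ on an interval of fixed positive length around $t^*$; slicing $[0,T]$ into $n$ subintervals and using the measure-compression bound of Definition~\ref{def:lagrangian-flow} to estimate each preimage $X(t_j,\cdot)^{-1}(B(h_k(t_j),CM/n))$, one bounds the Lebesgue measure of the colliding set by $O(1/n)$, hence by $0$.

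The heart of the proof is the propagation of constancy, and here the naive estimate is not enough. It suffices to show that, for a suitably small $\delta_0'\leq\delta_0$, one has $|X(t,x)-h_k(t)|\geq\delta>0$ for all $t\in[0,T]$ whenever $|x-h_k(0)|\geq\delta_0'$, because then $X(t,\cdot)^{-1}(B(h_k(t),\delta))\subseteq B(h_k(0),\delta_0)$, so $\omega(t,\cdot)=X(t,\cdot)_\#\omega_0=\alpha_k$ on $B(h_k(t),\delta)$. The difficulty is that the previous paragraph only gives $|X(t,x)-h_k(t)|\geq|x-h_k(0)|-Mt$, which may vanish: contrary to the vortex-wave system \cite{LM}, the effective drift $\beta_k(t):=\big(u+\sum_{j\neq k}\frac{\gamma_j}{2\pi}\frac{(h_k-h_j)^\perp}{|h_k-h_j|^2}\big)(t,h_k(t))-\dot h_k(t)$ does not vanish for general curves $h_k$, so one must use the fast rotation produced by the point vortex. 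Concretely, I would track the modified quantity $G(t):=|X(t,x)-h_k(t)|^2\big(1+\frac{4\pi}{\gamma_k}(X(t,x)-h_k(t))^\perp\cdot\beta_k(t)\big)$, a near-identity correction of $|X-h_k|^2$ tailored so that the $O(|\beta_k|)$ term of $\frac{d}{dt}|X-h_k|^2$ is cancelled by the action of the singular term on the correction. One is then left, in $\frac{d}{dt}G$, with terms controlled by $C\,G\,(1+|\log G|)$, using the log-Lipschitz bound on $u$, the smoothness near $h_k$ of the other point-vortex fields, and boundedness of $\dot\beta_k$; the latter requires controlling $\nabla u$ and $\partial_t u$ at $h_k(t)$, which follows from the constancy of $\omega$ near every $h_j$: on those balls $u$ solves $\div u=0,\ \curl u=\alpha_j$ and $\partial_t u$ solves $\div\,\partial_t u=0,\ \curl\,\partial_t u=0$, hence both are harmonic (up to an explicit linear term) with interior bounds controlled by the radius. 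Integrating $\frac{d}{dt}G\geq-C\,G(1+|\log G|)$ yields $G(t)\geq c(t)\,G(0)^{e^{Ct}}$, hence $|X(t,x)-h_k(t)|\geq\delta:=c(T)(\delta_0')^{e^{CT}}>0$ (which inevitably degenerates as $T\to\infty$, as allowed); trajectories coming in from far away are handled by restarting the estimate at the first time they reach distance $\delta_0'$. Since the estimate for vortex $k$ uses the constancy near all the $h_j$, I would close the argument by a continuity/bootstrap in time on $[0,T]$, after fixing $\delta_0'$ small enough — in terms of $T$, the $\gamma_j$, $\rho$, $\|\omega_0\|_{L^\infty}$, $R_0$ and $\|h_k\|_{W^{2,\infty}}$ — for the correction and all error terms to remain genuinely subordinate. (If some $\gamma_k=0$ there is no singularity at $h_k$ and the constancy follows directly from Osgood-type estimates for the log-Lipschitz flow.) The main obstacle is precisely this: removing the degeneracy of the naive distance estimate through the rotation-averaging mechanism encoded in $G$, while simultaneously bootstrapping the very constancy that controls the error terms in that estimate.
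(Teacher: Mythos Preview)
Your treatment of the first three claims is sound. For the non-collision statement your covering argument is a genuine alternative to the paper's route: the paper controls $\ln|X-h_k|$ directly through the local energy $F_k(t)=\sum_{j}\frac{\gamma_j}{2\pi}\ln|X(t,x)-h_j(t)|+\varphi_\varepsilon(t,X(t,x))+\langle X(t,x),\dot h_k^\perp(t)\rangle$ (with $\varphi_\varepsilon$ a smoothed stream function for $u$), proving that $|F_k'|$ is bounded up to $\varepsilon$-errors so that $|X-h_k|$ cannot reach $0$. Your measure-theoretic argument bypasses this functional and is a legitimate shortcut for that particular step.

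The genuine gap is in the propagation of constancy. Your correction $G$ forces you to differentiate $\beta_k(t)$, hence to evaluate $\nabla u$ and $\partial_t u$ \emph{pointwise} at $h_k(t)$; for $\omega\in L^\infty$ these are not bounded in general ($\nabla u$ is only $\mathrm{BMO}$), so you must invoke harmonicity on the constancy balls --- the very conclusion you are proving. The bootstrap you sketch does not close: the interior harmonic estimate scales like $|\nabla u(h_k)|\lesssim 1/r(t)$ where $r(t)$ is the current constancy radius, so the Osgood constant in your differential inequality for $G$ blows up like $1/r$; the resulting lower bound $|X-h_k|\gtrsim(\delta_0')^{\exp(C\int_0^t r^{-1})}$ must then dominate $r(t)$, an inequality that collapses in finite time regardless of how small $\delta_0'$ is chosen. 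The paper avoids this circularity entirely, and this is exactly the point of the functional $F_k$ (and its single-vortex analogue $\widetilde F_k$ used in the constancy step): by including the stream function $\varphi_\varepsilon$ instead of the velocity, the only time-derivative term appearing in $F_k'$ is $\partial_t\varphi_\varepsilon$, which is bounded in $L^\infty$ directly from the weak formulation (Proposition~\ref{prop:ineq:partialt-several}), with no pointwise control on $\nabla u$ or $\partial_t u$ and no appeal to constancy. The constancy is then a pure consequence of the a priori estimate $|\widetilde F_k'|\leq C(1+|x|+\varepsilon|X-h_k|^{-1})$ (Propositions~\ref{prop:loc}--\ref{prop:loca-1}), not an input to it. The missing idea in your proposal is to work at the level of the stream function rather than the velocity, so that the rotation-averaging you seek becomes an \emph{exact} cancellation instead of a perturbative correction that presupposes the conclusion.
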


We emphasize that Theorem~\ref{theorem:main-several-existence-lagrangian} does not rely on the equation verified by the point vortices and thus it 
holds not only for \eqref{syst:1} but also for any system \eqref{syst:1-2}.

\medskip

We finally turn to the uniqueness issue of \eqref{syst:2}. 
\begin{theorem}\label{theorem:main-several}
Let $\omega_0$ and $(\{h_{k,0}\},\{\ell_{k,0})\})$ be as in \eqref{syst:2}. Assume moreover that 
$$ \omega_0=\alpha_k\quad \text{on }B(h_{k,0},\delta_0),\quad \forall k=1,\ldots,N$$
for some $\alpha_k\in \R$ and $\delta_0>0$. Then for any $T>0$, there exists at most one weak solution 
$(\omega,\{h_{k}\})$ to \eqref{syst:1} on $[0,T]$ with this initial condition.
\end{theorem}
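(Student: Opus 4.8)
The plan is to prove uniqueness by a Gronwall-type argument comparing two weak solutions $(\omega^1,\{h_k^1\})$ and $(\omega^2,\{h_k^2\})$ with the same initial data. The key quantity to control is the combination of the ODE energy — involving $m_k|\dot h_k^1-\dot h_k^2|^2$ and $|h_k^1-h_k^2|^2$ — together with a Wasserstein-type distance between the vorticities $\omega^1(t,\cdot)$ and $\omega^2(t,\cdot)$ measured along their respective Lagrangian flows. Before doing this, I would first record the structural facts we get for free from Theorem~\ref{theorem:main-several-existence-lagrangian}: each $\omega^i$ is transported by a regular Lagrangian flow $X^i$, stays compactly supported in a fixed ball on $[0,T]$, the trajectories $X^i(t,x)$ stay away from the point vortices for a.e.\ $x$, and — crucially — because $\omega_0$ is constant on a neighborhood of each $h_{k,0}$, each $\omega^i(t,\cdot)$ equals $\alpha_k$ on a ball $B(h_k^i(t),\delta)$ with $\delta$ uniform in time. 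This last point is what makes the velocity field effectively log-Lipschitz near the singularities: the singular kernel $(x-h_k)^\perp/|x-h_k|^2$ is smooth on the support of the "non-constant part" of the vorticity, and the constant part produces a velocity contribution that is itself smooth near $h_k$.

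Next I would set up the comparison. For the fluid part, write $\omega^i = X^i(t,\cdot)_\#\omega_0$ and define $Q(t) = \int_{\R^2} |X^1(t,x)-X^2(t,x)|\,|\omega_0(x)|\,dx$ (or a smoothed/quadratic variant, possibly $\int \log(1+|X^1-X^2|^2/\delta^2)$ to absorb the log-Lipschitz loss). Differentiating in $t$ and using the ODEs $\dot X^i = v^i(t,X^i)$, the increment splits into (a) the difference $v^1(t,X^1(t,x))-v^1(t,X^2(t,x))$, controlled by the log-Lipschitz modulus of continuity of $v^1$ — this is where the uniform-$\delta$ property of Theorem~\ref{theorem:main-several-existence-lagrangian} is essential, since it guarantees $X^i(t,x)$ never enters the "bad" region where the kernel is genuinely singular relative to the non-constant vorticity; and (b) the difference $v^1(t,X^2(t,x))-v^2(t,X^2(t,x))$, which decomposes further into the Biot–Savart part $(K*\omega^1)-(K*\omega^2)$ — estimated by $Q(t)$ via the standard Loeper/Marchioro–Pulvirenti potential-theoretic inequality — and the point-vortex part $\sum_k \frac{\gamma_k}{2\pi}\big(\frac{(\cdot-h_k^1)^\perp}{|\cdot-h_k^1|^2}-\frac{(\cdot-h_k^2)^\perp}{|\cdot-h_k^2|^2}\big)$, which is Lipschitz in $h_k$ on the region where $X^2(t,x)$ lives (bounded below distance $\delta$ from both $h_k^1$ and $h_k^2$) and so is controlled by $\sum_k|h_k^1(t)-h_k^2(t)|$.

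For the ODE part, I would test the second-order equations in \eqref{syst:1} against $\dot h_k^1 - \dot h_k^2$. Because the right-hand side is of the form $\gamma_k(\dot h_k - F_k)^\perp$ with $F_k(t) = u(t,h_k) + \sum_{j\neq k}\frac{\gamma_j}{2\pi}\frac{(h_k-h_j)^\perp}{|h_k-h_j|^2}$, and $(\,\cdot\,)^\perp$ is orthogonal, the "$\gamma_k\dot h_k^\perp$" gyroscopic term contributes nothing to $\frac{d}{dt}\frac{m_k}{2}|\dot h_k^1-\dot h_k^2|^2$ after taking the scalar product — this is exactly the point of the energy structure emphasized in the introduction. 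What remains is $-\gamma_k\,(F_k^1 - F_k^2)^\perp \cdot (\dot h_k^1 - \dot h_k^2)$; the difference $F_k^1-F_k^2$ is controlled by $\|u^1(t,\cdot)-u^2(t,\cdot)\|_{L^\infty(\text{near }h_k)}$ plus the Lipschitz-in-position bounds on $u^1$ and on the inter-vortex kernel. The term $\|u^1-u^2\|_{L^\infty}$ near the points is again estimated by (a log-Lipschitz-type functional of) $Q(t)$. Collecting everything, one obtains a differential inequality of the form $\frac{d}{dt}\big(Q + \sum_k m_k|\dot h_k^1-\dot h_k^2|^2 + \sum_k|h_k^1-h_k^2|^2\big) \leq C\,\Phi\big(Q + \cdots\big)$ where $\Phi(r) = r(1+\log_-^{} r)$ is the Osgood modulus; since the quantity vanishes at $t=0$, Osgood's lemma forces it to vanish on $[0,T]$, giving uniqueness.

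The main obstacle, as always in vortex-type uniqueness, is handling the interaction of the singular point-vortex kernels with the Biot–Savart field at the level of the flow map: one must be sure that the log-loss from the Biot–Savart part and the near-singularity behavior close to the $h_k$ do not combine into something worse than an Osgood modulus, and that the contribution of the point-vortex singularities to the flow is genuinely Lipschitz (not merely log-Lipschitz) on the relevant region — which is guaranteed precisely because of the uniform separation $\delta$ provided by the constant-vorticity hypothesis and Theorem~\ref{theorem:main-several-existence-lagrangian}. A secondary technical point is justifying the differentiation of $Q(t)$ given only absolute continuity of $t\mapsto X^i(t,x)$ and the mere weak-$*$ continuity of $\omega$; this is where one uses the regular Lagrangian flow framework of \cite{LM,bresiliens-miot} to make the manipulations rigorous, rather than working directly with the transport PDE.
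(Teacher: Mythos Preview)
Your Lagrangian strategy is different from the paper's, which works entirely at the Eulerian level with
\[
D(t)=\|u^1(t,\cdot)-u^2(t,\cdot)\|_{L^2}^2+\sum_k|h_k^1-h_k^2|^2+\sum_k|\dot h_k^1-\dot h_k^2|^2,
\]
imports the estimate on $\|u^1-u^2\|_{L^2}^2$ from \cite{LM}, and obtains the crucial pointwise bound $|(u^1-u^2)(t,h_k^i)|\leq C\|u^1-u^2\|_{L^2}$ from the fact that $\omega^1-\omega^2$ vanishes on $B(h_k^1(t),\delta)\cap B(h_k^2(t),\delta)$, so that $u^1-u^2$ is \emph{harmonic} there. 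That harmonicity step is the heart of the argument.

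Your approach, as written, has a genuine gap. You assert that ``$X^2(t,x)$ lives bounded below distance $\delta$ from both $h_k^1$ and $h_k^2$'', so that the point-vortex kernel is Lipschitz along the trajectories. This is false for $x$ in the support of $\omega_0$: trajectories starting in $B(h_{k,0},\delta_0)$ can and do approach $h_k^i(t)$ arbitrarily closely --- Corollary~\ref{coro:inf-several} only rules out collision. The constant-vorticity conclusion of Theorem~\ref{theorem:main-several-existence-lagrangian} says $\omega^i(t,\cdot)=\alpha_k$ on $B(h_k^i(t),\delta)$; it does \emph{not} say that fluid particles avoid that ball. Hence your term (b) --- the difference of point-vortex kernels at $X^2(t,x)$ --- is not uniformly Lipschitz in $h_k$, and the estimate on $dQ/dt$ breaks down on the part of $\omega_0$ sitting near the vortices. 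For the same reason, your claimed control of $|(u^1-u^2)(t,h_k^2)|$ by ``a log-Lipschitz functional of $Q$'' is not justified: the standard Loeper/Marchioro--Pulvirenti bounds do not give pointwise control at a point where the kernel is singular.

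You could try to repair this by restricting $Q$ to $x\notin\bigcup_k B(h_{k,0},\delta_0)$, whose trajectories do stay uniformly away from the vortices by the second part of Proposition~\ref{prop:dmin-several}. But then $Q$ no longer controls the full Biot--Savart difference, since the constant pieces $X^i_\#(\omega_0\mathds{1}_{B(h_{k,0},\delta_0)})$ also differ. Closing the estimate then forces you back to the observation that $\omega^1-\omega^2$ vanishes near the vortices --- i.e.\ exactly the harmonicity argument the paper uses. So the Lagrangian route does not avoid the Eulerian ingredient; it is cleaner to work with $\|u^1-u^2\|_{L^2}$ from the outset.
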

The proof of Theorem~\ref{theorem:main-several} is a straightforward adaptation of the uniqueness proof given for the vortex-wave system in \cite{LM} when the vorticity is constant for all time in the neighborhood of the point vortices.
Hence the main difficulty in order to prove uniqueness under Assumption~\eqref{constant} is to prove the last point of Theorem~\ref{theorem:main-several-existence-lagrangian}.

\medskip

Theorem~\ref{theorem:main-several} together with Theorem~\ref{theorem:main-several-existence} thus implies global existence and uniqueness if all the $\gamma_{k}$ have the same sign, and existence and uniqueness up to the first collision otherwise.

\medskip

The plan of this paper is as follows. In the next section, we prove Theorem~\ref{theorem:main-several-existence} 
after collecting a few well-known properties. 
Then in Section~\ref{sec:lagrangian} we establish Theorem~\ref{theorem:main-several-existence-lagrangian}. Finally, in Section~\ref{sec:final-proof} we show how it implies Theorem~\ref{theorem:main-several} by adapting the arguments of \cite{LM, MiotThesis}. For simplicity we focus for this on the case of one point, but the case of $N\geq 1$ points is similar.
The last section is devoted to some additional properties satisfied by solutions of System~\eqref{syst:1}.

\medskip

With respect to the above-mentioned previous works, the main novelty for the proofs here is the use of a new local energy functional
\begin{equation}\label{defi:energy-intro}
F_k(t)=\sum_{j=1}^N \frac{\gamma_j}{2\pi} \ln |X(t,x)-h_j(t)|+\varphi(t,X(t,x))+\langle X(t,x),\dot{h}_k^\perp(t)\rangle,
\end{equation}defined as long as $X(t,x)\neq h_j(t)$, where $\varphi$ is the stream function associated to $u$ (namely $u=\nabla^\perp \varphi$, see \eqref{def:stream}). It turns out that the two last terms in the definition \eqref{defi:energy-intro} are uniformly bounded. 
Hence controlling the distances between the fluid particles and the massive point vortices 
(thus controlling the behavior of $\omega(t,\cdot)$ near those points) is made possible by proving that $F_k(t)$ is bounded. 
In the case of one point vortex, the following formal computation on the derivative of $F(t):=F_1(t)$ shows that the most singular terms cancel, which motivates our definition \eqref{defi:energy-intro}\footnote{We set $X=X(t,x)$ and $h=h(t)$ for more clarity.}:
\begin{equation*}
\begin{split}
F'=&\left\langle \frac{\gamma}{2\pi}\frac{X-h}{|X-h|^2},u(t,X)+\gamma K(X-h)-\dot{h}\right\rangle \\
&+\partial_t \varphi(t,X)+\langle \dot{X},\nabla\varphi(t,X)\rangle+\left\langle \dot{X}, \dot{h}^\perp \rangle+\langle X,\ddot{h}^\perp\right\rangle\\
=&\left\langle \frac{\gamma}{2\pi}\frac{X-h}{|X-h|^2},u(t,X)-\dot{h}\right\rangle +\partial_t \varphi(t,X)-\langle \dot{X},u^\perp(t,X)\rangle+\langle \dot{X}, \dot{h}^\perp \rangle+\langle X,\ddot{h}^\perp\rangle.
\end{split}
\end{equation*}
Since 
\begin{equation*}
\frac{\gamma}{2\pi}\frac{X-h}{|X-h|^2}=-\dot{X}^\perp+u^\perp(t,X),
\end{equation*}
we observe that the singular terms in the previous expression actually cancel. Finally, we get 
\begin{equation*}
F'(t)= -\langle u^\perp(t,X),\dot{h}\rangle +\partial_t \varphi(t,X)+\langle X,\ddot{h}^\perp\rangle.
\end{equation*} 
Thus it only remains to notice that this expression only involves bounded terms so that $|F'(t)|\leq C$ on $[0,T]$, as wanted. The rigorous proof of this bound for several points will be established in Proposition~\ref{prop:Fi}.

\medskip

\noindent
\textbf{Notations. } From now on $C$ will refer to a constant depending only on 
$T$, on $\rho$, on $\|h_k\|_{W^{2,\infty}([0,T])}$, and on the initial data ($R_0$, $m_k$, $\gamma_k$, $h_{k,0}$, $\ell_{k,0}$ and $\|\omega_0\|_{L^\infty}$), but not on $\delta_0$. It will possibly changing value from one line to another.

\medskip

\noindent
{\bf Acknowledgements.} The authors are partially supported by the Agence Nationale de la Recherche, Project SINGFLOWS, grant ANR-18-CE40-0027-01. C.L. was also partially supported by the CNRS, program Tellus, and the ANR project IFSMACS, grant ANR-15-CE40-0010. E.M. acknowledges French ANR project INFAMIE ANR-15-CE40-01.
Both authors thank warmly Olivier Glass and Franck Sueur for interesting discussions. They also thank warmly the anonymous referee for suggesting a simplification of the proof 
of Proposition~\ref{prop:ineg:flot}, which appears in the present paper.

\section{Proof of Theorem~\ref{theorem:main-several-existence}}

\label{sec:existence}

\subsection{Some general regularity properties}

We start with the following well-known property, see \cite[Appendix 2.3]{MarPul} for instance.
\begin{proposition}\label{prop:reg-u}
Let $\Omega\in L^\infty([0,T],L^1\cap L^\infty( \RR))$. Let $U=K\ast \Omega$. Then we have 
$$\|U\|_{L^\infty}\leq C\|\Omega\|_{L^\infty(L^1)}^{1/2}\|\Omega\|_{L^\infty}^{1/2}.$$ Moreover, $U$ is log-Lipschitz uniformly in time:
\begin{equation*}
\|U(\cdot,x)-U(\cdot,y)\|_{L^\infty(0,T)}\leq C(\|\Omega\|_{L^\infty(L^1\cap L^\infty)}) |x-y|(1+|\ln|x-y||),\quad \forall (x,y)\in \RR\times \RR.
\end{equation*}
\end{proposition}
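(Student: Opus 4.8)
The plan is to establish the two bounds separately, in each case splitting the convolution $U = K*\Omega$ at a length scale tuned to the data, exactly as in the classical 2D Euler estimates. Since $|K(z)| = 1/(2\pi|z|)$, for the $L^\infty$ bound I fix $(t,x)$, pick $r>0$ to be chosen, and write
\[
|U(t,x)| \le \frac{1}{2\pi}\int_{|x-y|<r}\frac{|\Omega(t,y)|}{|x-y|}\,dy + \frac{1}{2\pi}\int_{|x-y|\ge r}\frac{|\Omega(t,y)|}{|x-y|}\,dy.
\]
The first integral is at most $\|\Omega(t)\|_{L^\infty}\int_{|z|<r}\frac{dz}{2\pi|z|} = r\,\|\Omega(t)\|_{L^\infty}$, while the second is at most $\frac{1}{2\pi r}\,\|\Omega(t)\|_{L^1}$. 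Choosing $r = (\|\Omega(t)\|_{L^1}/\|\Omega(t)\|_{L^\infty})^{1/2}$ makes the two contributions comparable and yields $|U(t,x)| \le C\|\Omega(t)\|_{L^1}^{1/2}\|\Omega(t)\|_{L^\infty}^{1/2}$; taking the supremum over $(t,x)$ gives the first assertion.

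For the log-Lipschitz bound I fix $t$ and $x\ne y$, set $d := |x-y|$, and first treat the case $d\le 1/3$. I split $\RR = A\cup B$ with $A := \{z:\ |x-z|\le 3d\}$ and $B := \{z:\ |x-z|> 3d\}$. On $A$ one has also $|y-z|\le 4d$, so bounding crudely $|K(x-z)-K(y-z)|\le |K(x-z)|+|K(y-z)|$ and integrating the two singular kernels against $\|\Omega(t)\|_{L^\infty}$ shows that the $A$-contribution is $\le C d\,\|\Omega(t)\|_{L^\infty}$. On $B$, every point $\xi$ of the segment $[x,y]$ satisfies $|\xi-z|\ge \frac{2}{3}|x-z|$, so by the mean value theorem together with $|DK(w)|\le C|w|^{-2}$ we get $|K(x-z)-K(y-z)|\le C d\,|x-z|^{-2}$; splitting $B$ into $\{3d<|x-z|\le 1\}$ and $\{|x-z|>1\}$ and using $\|\Omega(t)\|_{L^\infty}$ on the first piece (the radial integral $\int_{3d}^1 r^{-1}\,dr$ producing the logarithm) and $\|\Omega(t)\|_{L^1}$ on the second, the $B$-contribution is $\le C d\,(1+|\ln d|)\,\|\Omega(t)\|_{L^\infty} + C d\,\|\Omega(t)\|_{L^1}$. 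Summing and taking the supremum in $t$ settles $d\le 1/3$. When $d>1/3$ the estimate is immediate from the already-established uniform bound on $\|U\|_{L^\infty}$, since then $d(1+|\ln d|)\ge d > 1/3$ so $|U(t,x)-U(t,y)|\le 2\|U\|_{L^\infty}\le C\le C d(1+|\ln d|)$.

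There is no genuinely hard step here: the argument is a standard potential-theoretic truncation, which is why one may simply cite \cite[Appendix 2.3]{MarPul}. The only points demanding a little care are (i) taking $r$ to be the geometric mean of $\|\Omega\|_{L^1}$ and $\|\Omega\|_{L^\infty}$, which is exactly what produces the $1/2$ exponents, and (ii) keeping the three regions in the log-Lipschitz estimate consistent so that $|x-z|$ and $|y-z|$ remain comparable on $B$ and the logarithmic factor is generated cleanly; all the remaining estimates are elementary.
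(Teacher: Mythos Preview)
Your proof is correct and is precisely the standard truncation argument. Note that the paper does not actually give a proof of this proposition: it merely states it as well known and cites \cite[Appendix 2.3]{MarPul}. Your write-up supplies exactly the details one would find there, so there is nothing to compare.
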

We also have the Calder\'on-Zygmund inequality \cite[Chapter II, Theorem 3]{stein}
\begin{proposition}\label{prop:calderon} There exists $C$ such that for all $p\geq 2$
\begin{equation*}
\|\nabla U(t,\cdot)\|_{L^p}\leq Cp \|\Omega(t,\cdot)\|_{L^p}\quad \text{for all } t\in [0,T].
\end{equation*}
\end{proposition}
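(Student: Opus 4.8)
The plan is to recognize each component of $\nabla U$ as the image of $\Omega$ under a convolution Calder\'on--Zygmund operator, and then to track how the $L^p$ operator norm depends on $p$. First I would write $K=\nabla^\perp G$, where $G(x)=\frac{1}{2\pi}\ln|x|$ is the fundamental solution of the Laplacian on $\RR$, so that $U=K\ast\Omega=\nabla^\perp(G\ast\Omega)$ and the entries of $\nabla U$ are second derivatives of the logarithmic potential of $\Omega$. Concretely $\partial_\ell U_m=(\partial_\ell K_m)\ast\Omega=:T_{\ell m}\Omega$, where each kernel $\partial_\ell K_m$ is homogeneous of degree $-2$, smooth on $\RR\setminus\{0\}$, and has vanishing mean over the unit circle. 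The non-integrable singularity is dealt with in the usual way: $T_{\ell m}$ is interpreted as a principal value convolution plus a bounded multiple of the identity (the delta contribution), so that $T_{\ell m}$ is a genuine Calder\'on--Zygmund singular integral operator.

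Next I would prove the two endpoint bounds with constants independent of $p$. For the $L^2$ bound I would use Plancherel: the Fourier multiplier of each $T_{\ell m}$ is homogeneous of degree $0$ and bounded, hence $\|T_{\ell m}\|_{L^2\to L^2}\leq A$ for an absolute constant $A$. For the weak type $(1,1)$ bound I would run the Calder\'on--Zygmund decomposition of $\Omega$ at height $\lambda$, writing $\Omega=g+b$ with $g$ controlled in $L^2$ and $b$ a sum of mean-zero pieces supported on disjoint cubes; the smoothness (H\"ormander) condition on $\partial_\ell K_m$ then bounds the contribution of $b$ outside the dilated cubes, which yields $\|T_{\ell m}\Omega\|_{L^{1,\infty}}\leq A'\|\Omega\|_{L^1}$ with $A'$ absolute.

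Finally I would interpolate and dualize to obtain the linear growth in $p$. Combining Marcinkiewicz interpolation between the weak $(1,1)$ and strong $(2,2)$ bounds (whose constant is $\sim 1/(q-1)$ as $q\to1^+$) with the plain $L^q$ boundedness for $q$ bounded away from $1$, one obtains $\|T_{\ell m}\|_{L^q\to L^q}\leq C/(q-1)$ for every $q\in(1,2]$ (here $1/(q-1)\geq 1$, which absorbs the $O(1)$ norm on the upper part of the range). To reach $p\geq 2$ I would pass to the adjoint: $T_{\ell m}^*$ is convolution with the reflected kernel $x\mapsto\partial_\ell K_m(-x)$, which is again a Calder\'on--Zygmund kernel of the same type and hence obeys the same bounds. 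Duality then yields, for $p\geq 2$ with $p'=p/(p-1)\in(1,2]$,
\[
\|T_{\ell m}\|_{L^p\to L^p}=\|T_{\ell m}^*\|_{L^{p'}\to L^{p'}}\leq \frac{C}{p'-1}=C(p-1)\leq Cp.
\]
Summing over the finitely many entries $\ell,m\in\{1,2\}$ gives $\|\nabla U(t,\cdot)\|_{L^p}\leq Cp\,\|\Omega(t,\cdot)\|_{L^p}$, uniformly in $t\in[0,T]$ since none of the constants depend on time.

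The main obstacle is not the boundedness of $T_{\ell m}$, which is classical Calder\'on--Zygmund theory, but the \emph{sharp linear} dependence of the constant on $p$. The delicate point is to check that the Marcinkiewicz constant really behaves like $1/(q-1)$ as $q\to 1^+$ — rather than some worse power — which forces one to keep explicit track of the constant produced by the Calder\'on--Zygmund decomposition; the linear growth at large $p$ is then recovered from this near-$L^1$ information by duality, exploiting that the adjoint is a kernel of the same class.
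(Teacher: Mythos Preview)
Your proposal is correct and reproduces the classical Calder\'on--Zygmund argument with the explicit tracking of the $p$-dependence. Note, however, that the paper does not give a proof of this proposition at all: it simply states the inequality and cites \cite[Chapter II, Theorem 3]{stein}. What you have written is essentially a sketch of that cited result --- identifying the entries of $\nabla U$ as singular integrals of $\Omega$, obtaining the $L^2$ and weak-$(1,1)$ endpoints, and then interpolating and dualizing to extract the linear growth in $p$ --- so your approach is the standard one underlying the reference, just made explicit rather than outsourced.
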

In particular, it follows that any such velocity field satisfies
\begin{equation}
\label{regu:field}
U\in L^\infty([0,T]\times \RR))\cap L^\infty([0,T],W^{1,1}_{\text{loc}}(\RR)),\quad \text{div}(U)=0.
\end{equation}

\subsection{Some basic properties for weak solutions of \eqref{syst:1}-\eqref{syst:2}}
\label{subsec:basic}

In all this paragraph, $(\omega, \{h_k\})$ denotes a weak solution of \eqref{syst:1}-\eqref{syst:2} on $[0,T]$, so that in particular $u$ satisfies Proposition~\ref{prop:reg-u} and the regularity property \eqref{regu:field}. We assume \emph{moreover} that
$\omega(t,\cdot)$ is compactly supported in some $B(0,R)$ for all $t\in [0,T]$.

\medskip

We introduce the stream function
\begin{equation}\label{def:stream}
\varphi(t,x)=\frac{1}{2\pi}\int_{\R^2}\ln |x-y|\omega(t,y)\,dy,
\end{equation}
so that
\begin{equation*}
u(t,x)=\nabla^\perp \varphi(t,x).
\end{equation*}
For the subsequent computations, in order to make the arguments rigorous, we introduce a regularized version of the stream function: for $\eps>0$ and 
$\ln_\eps$ a smooth function coinciding with $\ln$ on $[\eps,+\infty)$ and satisfying 
$|\ln_\eps'(r)|\leq C/\eps$ for all $r>0$, we set 
\begin{equation}\label{def:stram-2}
\varphi_\eps(t,x)=\frac{1}{2\pi}\int_{\RR} \ln_\eps |x-y|\omega(t,y)\,dy.
\end{equation}
Note that by assumption on the support of $\omega(t,\cdot)$ the following estimate holds for $\varphi_\eps$:
\begin{equation} \label{est:phi}
|\varphi_\eps(t,x)|\leq C\ln (2+|x|),
\end{equation}
with $C$ also independent of $\varepsilon$.
 
The following bound will be useful in order to establish a bound on the local energies in Proposition ~\ref{prop:Fi}:
\begin{proposition}\label{prop:ineq:partialt-several}
There exists $C_R$ depending only on $R$, $\|\omega\|_{L^\infty}$, $\|h_k\|_{L^\infty}$, and the initial data, such that
\begin{equation*}
\| \partial_t \varphi_\eps \|_{L^\infty} \leq C_R.
 \end{equation*}
\end{proposition}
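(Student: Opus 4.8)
\emph{Strategy.} The plan is to differentiate $\varphi_\eps$ in time using the transport equation, and then to exploit the vectorial cancellations that make the resulting kernel integrable in $y$, uniformly in $x$.

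\smallskip
\noindent\emph{Step 1: an explicit formula for $\partial_t\varphi_\eps$.} Fix $x\in\RR$ and a cut-off $\chi\in C^\infty_c(\RR)$ with $\chi\equiv 1$ on $B(0,R+1)$. Testing the weak formulation of the PDE in \eqref{syst:1} against $\theta(t)\,\chi(y)\,\ln_\eps|x-y|$ with $\theta\in C^\infty_c(0,T)$, and using that $\supp\omega(t,\cdot)\subset B(0,R)$ (so $\nabla\chi$ vanishes on $\supp\omega$) together with $v\omega\in L^\infty([0,T],L^1(\RR))$ --- indeed $u\omega$ is bounded with compact support, while the singular part of $v$ is $O(1/|y-h_k|)$ near $h_k$, hence locally integrable --- one reads off that, for a.e.\ $t\in[0,T]$,
\[
\partial_t\varphi_\eps(t,x)=\frac{1}{2\pi}\int_{\RR}\bigl(v(t,y)\cdot\nabla_y\ln_\eps|x-y|\bigr)\,\omega(t,y)\,dy .
\]
Here the truncation $\ln_\eps$ plays exactly its role: $\nabla_y\ln_\eps|x-\cdot|$ is a genuine bounded function, so differentiation under the integral and the integration by parts above are licit even at $y=x$.

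\smallskip
\noindent\emph{Step 2: the regular part, and an elementary bound on $\ln_\eps'$.} Since $\ln_\eps|x-\cdot|$ depends only on $|x-y|$, one has $\nabla_y\ln_\eps|x-y|=\ln_\eps'(|x-y|)\,\frac{y-x}{|x-y|}$, and $|\ln_\eps'(r)|\le C/r$ for every $r>0$ (equality $\ln_\eps'(r)=1/r$ if $r\ge\eps$; and $|\ln_\eps'(r)|\le C/\eps\le C/r$ if $r<\eps$). Thus $|\nabla_y\ln_\eps|x-y||\le C/|x-y|$. Splitting $v=u+\sum_k\frac{\gamma_k}{2\pi}\frac{(y-h_k)^\perp}{|y-h_k|^2}$, the contribution of $u$ is controlled by
\[
C\,\|u\|_{L^\infty}\|\omega\|_{L^\infty}\int_{B(0,R)}\frac{dy}{|x-y|}\le C_R,
\]
using Proposition~\ref{prop:reg-u} and $\supp\omega(t,\cdot)\subset B(0,R)$.

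\smallskip
\noindent\emph{Step 3: the singular part --- the main point.} For each $k$, since $\nabla_y\ln_\eps|x-y|$ is parallel to $y-x$, the scalar kernel to estimate is $\ln_\eps'(|x-y|)\,\frac{(y-x)\cdot(y-h_k)^\perp}{|x-y|\,|y-h_k|^2}$. Setting $b=y-x$, $c=y-h_k$, so $b-c=h_k-x$: from $b=c+(h_k-x)$ one gets $b\cdot c^\perp=(h_k-x)\cdot c^\perp$, and from $c=b-(h_k-x)$ one gets $b\cdot c^\perp=-\,b\cdot(h_k-x)^\perp$; hence
\[
|(y-x)\cdot(y-h_k)^\perp|\le |x-h_k|\,\min\bigl(|y-x|,\,|y-h_k|\bigr),
\]
which vanishes to first order \emph{both} at $y=x$ and at $y=h_k$. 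Combined with $|\ln_\eps'(|x-y|)|\le C/|x-y|$, the $k$-th integrand is bounded by
\[
C\,|x-h_k|\,\min\!\Bigl(\frac{1}{|x-y|\,|y-h_k|^2},\ \frac{1}{|x-y|^2\,|y-h_k|}\Bigr)\,|\omega(t,y)| .
\]
Bounding $|\omega|$ by $\|\omega\|_{L^\infty}$ and rescaling $y=x+|x-h_k|\,w$ (the case $x=h_k$ being trivial, the kernel then vanishing), the integral reduces to $\int_{\RR}\min\bigl(\frac{1}{|w|\,|w-e|^2},\frac{1}{|w|^2\,|w-e|}\bigr)\,dw$ with $|e|=1$, which is a finite constant, independent of $x$ and $h_k$: the integrand is $\lesssim 1/|w|$ near $0$, $\lesssim 1/|w-e|$ near $e$, $\lesssim 1/|w|^3$ at infinity, and is rotation-invariant. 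Summing over $k$ and adding Step~2 gives $\|\partial_t\varphi_\eps\|_{L^\infty}\le C_R$.

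\smallskip
The heart of the argument is Step~3: the naive bound $|v|\lesssim 1/|y-h_k|$ would produce the non-integrable product $1/(|x-y|\,|y-h_k|)$ as $x\to h_k$, so one genuinely has to keep the vectorial structure --- the parallelism of $\nabla_y\ln_\eps|x-y|$ with $y-x$, and the double cancellation of $(y-x)\cdot(y-h_k)^\perp$. Steps~1 and~2, once the $\ln_\eps$-regularization is in place, are routine.
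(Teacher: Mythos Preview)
Your proof is correct and follows the same overall line as the paper: derive the formula for $\partial_t\varphi_\eps$ from the weak formulation, split $v=u+\sum_k\gamma_k K(\cdot-h_k)$, bound the $u$-part trivially, and then estimate the singular kernel
\[
\int_{\RR}\frac{|(x-y)\cdot(y-h_k)^\perp|}{|x-y|^2\,|y-h_k|^2}\,|\omega(t,y)|\,dy.
\]
The only genuine difference is in how that last integral is bounded. The paper simply invokes Marchioro's estimates (1.39)--(1.43), whereas you give a self-contained argument: you extract the double cancellation $|(y-x)\cdot(y-h_k)^\perp|\le |x-h_k|\min(|y-x|,|y-h_k|)$ explicitly, then rescale $y=x+|x-h_k|w$ to reduce to a universal convergent integral. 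Your route has the advantage of being elementary and fully spelled out, and it makes transparent why the bound is uniform in $x$ (even as $x\to h_k$); the paper's route is shorter but relies on an external reference. Both exploit exactly the same vectorial structure, so the distinction is one of presentation rather than strategy.
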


\begin{proof}
Using the weak formulation for $\omega$ in \eqref{syst:1}, we have
\begin{multline*}
 \partial_t \varphi_\eps(t,x)\\
 =\frac{1}{2\pi}\int_{\RR} \ln_\eps'(|x-y|)\frac{y-x}{|y-x|}\cdot\left(u(t,y)+\sum_{k=1}^N \gamma_k K(y-h_k(t))\right)\omega(t,y)\,dy,
 \end{multline*}
 therefore
 \begin{align*}
| \partial_t \varphi_\eps(t,x)|
\leq& C\|u\|_{L^\infty} \int_{\RR} \frac{|\omega(t,y)|}{|x-y|}\,dy\\
&+C \sum_{k=1}^N \int_{\RR} \left|\frac{(x-y)\cdot(y-h_k(t))^\perp}{|x-y|^2|y-h_k(t)|^2} \omega(t,y)\right|\,dy.
 \end{align*}
By the estimates (1.39) to (1.43) in \cite{Marchioro}, there exists a constant $C$ depending only on $R$, on $\|\omega \|_{L^\infty}$, and on $\|h_k\|_{L^\infty}$, such that
\begin{equation*}\label{ineq:partialt}
| \partial_t \varphi_\eps(t,x)|\leq C_R.
 \end{equation*}
The conclusion follows.
\end{proof}
In the previous computation, we needed the smoothness of $\ln_{\varepsilon}$ in order to use the weak formulation for $\omega$. This explains why we have to replace $\varphi$ in the definition of $F_{k}$ \eqref{defi:energy-intro} by $\varphi_{\varepsilon}$ (see \eqref{def-Fk}) when we compute the derivative.

In the following proposition we state that $\nabla\varphi_{\varepsilon}$ approaches uniformly $\nabla\varphi$.

\begin{proposition}\label{prop:reste-vitesse}We have
 \begin{equation*}\nabla^\perp \varphi_\eps=u+R_\eps, 
 \end{equation*}
where $R_\eps$ satisfies
\begin{equation*}
\|R_\eps \|_{L^\infty}\leq \eps C\|\omega\|_{L^\infty}.
\end{equation*}
\end{proposition}
\begin{proof}
We have
 \begin{align*}
 \nabla^\perp \varphi_\eps(t,x)&=u(t,x)+\int_{\RR}\frac{(x-y)^\perp}{|x-y|^2}\left( |x-y|\ln'_\eps|x-y|-1\right)\omega(t,y)\,dy\\
 &=u(t,x)+R_\eps(t,x), 
 \end{align*}
where
\begin{equation*}
|R_\eps(t,x)|\leq C\int_{|y-x|\leq \eps}\frac{1}{|x-y|}|\omega(t,y)|\,dy\leq C\|\omega(t,\cdot)\|_{L^\infty}\eps.
\end{equation*}
\end{proof}

\subsection{Proof of Theorem~\ref{theorem:main-several-existence} }

The proof of Theorem~\ref{theorem:main-several-existence} is divided into two steps.
\medskip

\textbf{Step 1: iterative scheme.}
Let $\rho\in (0,\min_{k\neq p} |h_{k,0}-h_{p,0}|)$ which will be fixed later. We consider the following iterative scheme: for $n\in \mathbb{N}^\ast$, given 
$$
\omega_{n-1}\in L^\infty([0,T_{n-1}],L^1\cap L^\infty(\RR)),
$$
and given $N$ trajectories $h_{k,n-1}$ in $C^2([0,T_{n-1}])$ such that
$$\min_{t\in [0,T_{n-1}]}\min_{k\neq p} |h_{k,n-1}(t)-h_{p,n-1}(t)|>0,$$
for some $T_{n-1}>0$, we set 
$$u_{n-1}=K\ast \omega_{n-1},$$ 
having in mind to solve the linear PDE
\begin{equation}\label{syst:3}\begin{cases}
\displaystyle \partial_t \omega_{n}+\div \Bigg[\Big(u_{n-1}+\sum_{k=1}^N\frac{\gamma_k}{2\pi} \frac{(x-h_{k,n-1}(t))^\perp}{|x-h_{k,n-1}(t)|^2}\Big) \omega_{n}\Bigg]=0\\
\omega_{n}(0)=\omega_0,\end{cases}\end{equation}
and the non linear system of ODEs: for $k=1,\ldots,N$, 
\begin{equation}\label{syst:3-ODE}
\left\{
\begin{aligned}
& \displaystyle \ddot{h}_{k,n}(t)=\frac{\gamma_k}{m_{k}}\Big(\dot{h}_{k,n}(t)-u_{n-1}(t,h_{k,n}(t))\\
&\hspace{3cm}-\sum_{j\neq k}\frac{\gamma_j}{2\pi}\frac{(h_{k,n}(t)-h_{j,n}(t))^{\perp}}{|h_{k,n}(t)-h_{j,n}(t)|^2}\Big)^\perp\\
&(h_{k,n}(0),\dot{h}_{k,n}(0))=(h_{k,0},\ell_{k,0}),
\end{aligned}\right.
\end{equation}
on $[0,T_{n}]$, where $T_{n}\in (0,T_{n-1}]$ will be chosen such that
\begin{equation}\label{syst:3-dist}
\min_{t\in [0,T_{n}]} \min_{k\neq p} |h_{k,n}(t)-h_{p,n}(t)|\geq \rho. 
\end{equation}

For $n=0$ we take $\omega_0$ and $(h_{k,0},\ell_{k,0})$ as data (with $T_{0}=+\infty$).

\begin{proposition} \label{prop:n}
For all $n\in \mathbb{N}$, there exist $T_{n}\in (0,T_{n-1}]$ and a unique weak solution $\omega_{n}$ to \eqref{syst:3} and $\{h_{k,n}\}$ to \eqref{syst:3-ODE} on $[0,T_{n}]$ such that \eqref{syst:3-dist} is satisfied.

Moreover, $$\|\omega_{n}(t,\cdot ) \|_{L^1\cap L^\infty}\leq \|\omega_{0}\|_{L^1\cap L^\infty} \quad \forall t\in [0,T_{n}]$$ and there exists $\widetilde{T}$ depending only on $\rho$, $h_{k,0},\ell_{k,0}$, $R_0$ and $\|\omega_{0}\|_{L^\infty}$ such that $T_{n}\geq \widetilde{T}$ for all $n$.

Finally, if all the $\gamma_k$ have the same sign, then for any $T>0$, one can choose $\rho$ depending on $T$ (and on $h_{k,0},\ell_{k,0}$ and $\|\omega_{0}\|_{L^\infty}$, $R_0$) such that $T_n= T$ for all $n\in \mathbb{N}$.
\end{proposition}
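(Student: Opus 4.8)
The plan is to treat the linear transport equation \eqref{syst:3} and the ODE system \eqref{syst:3-ODE} separately at each step, then close the argument with a uniform lower bound on the existence times $T_n$. First I would observe that, given $\omega_{n-1}$ and the trajectories $\{h_{k,n-1}\}$ on $[0,T_{n-1}]$ with a positive minimal distance, the vector field $u_{n-1}+\sum_k \frac{\gamma_k}{2\pi}\frac{(x-h_{k,n-1})^\perp}{|x-h_{k,n-1}|^2}$ is, for each fixed $t$, the sum of a log-Lipschitz bounded divergence-free field (Proposition~\ref{prop:reg-u}) and $N$ explicit smooth divergence-free fields which are bounded away from their poles on the complement of small balls around the $h_{k,n-1}(t)$. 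Since the poles are themselves $C^1$ curves staying at distance $>0$ from one another, the field is locally log-Lipschitz away from the curves, so its characteristics are well-defined: one builds the flow $\Phi_n(t,\cdot)$ by a standard Osgood/Picard-Lindel\"of argument away from the singular curves, using that a characteristic starting away from a curve cannot reach it in finite time (the radial component of the singular part vanishes, as in the classical point-vortex estimates, cf.\ \cite{Marchioro,MarPul}). Then $\omega_n(t,\cdot):=\Phi_n(t,\cdot)_\#\omega_0$ is the unique weak solution of \eqref{syst:3}; since $\Phi_n(t,\cdot)$ is measure-preserving (the field is divergence-free), we immediately get $\|\omega_n(t,\cdot)\|_{L^1\cap L^\infty}\leq\|\omega_0\|_{L^1\cap L^\infty}$, and $\omega_n(t,\cdot)$ is compactly supported in a ball $B(0,R_n(t))$ whose radius grows at a controlled (time-dependent) rate because $\|u_{n-1}\|_{L^\infty}$ and the singular velocities on the support are bounded in terms of $\|\omega_0\|_{L^1\cap L^\infty}$, $\rho$ and $\|h_{k,n-1}\|_{L^\infty}$.

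Next I would solve the ODE system \eqref{syst:3-ODE}. This is an ODE in $(h_{k,n},\dot h_{k,n})_{k=1,\dots,N}\in(\RR\times\RR)^N$ whose right-hand side is smooth in time (through $u_{n-1}(t,\cdot)$, which is log-Lipschitz in space hence continuous, and bounded) and locally Lipschitz in the unknowns as long as the $h_{k,n}$ remain distinct. By Cauchy-Lipschitz there is a unique maximal $C^2$ (indeed $C^\infty$ in the mutual-distance region, with $C^2$ regularity up to the continuity of $u_{n-1}$) solution on a maximal interval; we then let $T_n\in(0,T_{n-1}]$ be the largest time $\leq T_{n-1}$ such that \eqref{syst:3-dist} holds. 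To get the uniform lower bound $T_n\geq\widetilde T$, I would estimate $|\dot h_{k,n}(t)|$: from \eqref{syst:3-ODE}, on the time interval where \eqref{syst:3-dist} holds the term $\sum_{j\neq k}\frac{\gamma_j}{2\pi}\frac{(h_{k,n}-h_{j,n})^\perp}{|h_{k,n}-h_{j,n}|^2}$ is bounded by $C/\rho$, and $\|u_{n-1}\|_{L^\infty}\leq C$ by Proposition~\ref{prop:reg-u} and the $L^1\cap L^\infty$ bound above; a Gronwall argument on $|\dot h_{k,n}-u_{n-1}(t,h_{k,n})-(\dots)|$ (or directly on $|\dot h_{k,n}|$, noting the gyroscopic term is orthogonal so it does not feed energy — $\frac{d}{dt}|\dot h_{k,n}-u_{n-1}(t,h_{k,n})-\dots|^2$ controls the growth) gives $|\dot h_{k,n}(t)|\leq C(1+t)e^{Ct}$ with $C$ independent of $n$. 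Hence $|h_{k,n}(t)-h_{k,0}|\leq C(t)$ uniformly in $n$, so the minimal distance can decrease from $\min_{k\neq p}|h_{k,0}-h_{p,0}|$ to $\rho$ only after a time $\widetilde T=\widetilde T(\rho,h_{k,0},\ell_{k,0},R_0,\|\omega_0\|_{L^\infty})>0$; choosing $\rho<\frac12\min_{k\neq p}|h_{k,0}-h_{p,0}|$ fixes it. This also shows $T_n\geq\min(T_{n-1},\widetilde T)$, and since $T_0=+\infty$, induction gives $T_n\geq\widetilde T$ for all $n$.

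Finally, for the same-sign case, the key point is that collisions are then excluded on any fixed time horizon by the Marchioro-Pulvirenti argument \cite{MarPul1}: when all $\gamma_k$ have the same sign the interaction energy $-\sum_{j\neq k}\gamma_j\gamma_k\ln|h_{k,n}-h_{j,n}|$ (plus the coupling terms with $\omega_n$ and the kinetic contributions $\frac{m_k}{2}|\dot h_{k,n}|^2$, which are controlled by the velocity bound just obtained) is, up to bounded additive terms, monotone/controlled, forcing $\min_{k\neq p}|h_{k,n}-h_{p,n}|(t)\geq c(T)>0$ on $[0,T]$ with $c(T)$ independent of $n$; then taking $\rho\leq c(T)$ yields $T_n=T$ for all $n$. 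The main obstacle is precisely this last uniform-in-$n$ non-collision estimate in the same-sign case: one must show the logarithmic energy functional — which for the massive system must incorporate the kinetic terms $\frac{m_k}{2}|\dot h_{k,n}|^2$ and the background-vorticity coupling $\sum_k\gamma_k\varphi_n(t,h_{k,n})$ — is bounded uniformly in $n$ and in $t\in[0,T]$, using Proposition~\ref{prop:ineq:partialt-several} to control the time derivative of the coupling term and the velocity bound above to control the kinetic part; this is where the new functional $\mathcal H_n$ of \eqref{def-Hn} enters and where the bulk of the work lies, the transport and Cauchy-Lipschitz steps being standard.
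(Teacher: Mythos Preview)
Your approach to the transport equation and the ODE is essentially the paper's, modulo one technical point: $u_{n-1}$ is only log-Lipschitz in space, so both the construction of characteristics for \eqref{syst:3} and the well-posedness of \eqref{syst:3-ODE} require Osgood's lemma rather than Cauchy--Lipschitz. (The paper in fact obtains existence, uniqueness and the $L^p$ norm preservation for \eqref{syst:3} via the renormalization route of Lemma~\ref{appendix:1} and Corollary~\ref{coro:appendix} rather than by building characteristics directly, but either works.)

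The genuine gap is in the same-sign case. You propose to control the kinetic part of the energy functional using ``the velocity bound above'', but that bound (your $|\dot h_{k,n}|\leq C(1+t)e^{Ct}$, or the paper's \eqref{ineg:d-31}) depends on $\rho$, which is exactly what you are trying to choose at the end --- this is circular. You also suggest including the vorticity coupling $\sum_k\gamma_k\varphi_n(t,h_{k,n})$ in the functional; note that the ODE at step $n$ involves $u_{n-1}$, so the relevant stream function would be $\varphi_{n-1}$, and in any case this coupling turns out to be unnecessary for the iterative scheme.

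The paper's $\mathcal H_n$ in \eqref{def-Hn} contains \emph{only} the interaction term $\sum_{j\neq k}\frac{\gamma_j\gamma_k}{2\pi}\ln|h_{j,n}-h_{k,n}|$ and the kinetic term $-\sum_k m_k|\dot h_{k,n}|^2$. After symmetrizing the double sum and inserting the ODE \eqref{syst:3-ODE}, one obtains the clean identity
\[
\dot{\mathcal H}_n(t)=2\sum_{k=1}^N\gamma_k\,\dot h_{k,n}(t)\cdot u_{n-1}(t,h_{k,n}(t))^\perp,
\]
hence $|\dot{\mathcal H}_n|\leq C\sum_k|\dot h_{k,n}|$ with $C$ independent of $\rho$. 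On the other hand, since all $\gamma_j\gamma_k>0$ and $\ln|x-y|\leq |x|+|y|$, the definition of $\mathcal H_n$ gives $m_k|\dot h_{k,n}|^2\leq |\mathcal H_n|+C\sum_j|h_{j,n}|$. These two inequalities close a bootstrap loop on $\max_k\max_{[0,T_n]}|\dot h_{k,n}|$ and $\max_{[0,T_n]}|\mathcal H_n|$, yielding $\rho$-independent bounds on $|\mathcal H_n|$, on the speeds, and finally (reading the interaction term the other way) on the minimal distance between the points. Choosing $\rho$ below this last bound gives $T_n=T$.
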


\begin{proof} Given $(\omega_{n-1},\{h_{k,n-1}\})$ satisfying the bound of Proposition~\ref{prop:n}, we solve the \emph{linear} transport equation \eqref{syst:3} with initial data $\omega_0$ and velocity field given by
$$v_{n-1}(t,x)=u_{n-1}(t,x)+\sum_{k=1}^N\frac{\gamma_k}{2\pi} \frac{(x-h_{k,n-1}(t))^\perp}{|x-h_{k,n-1}(t)|^2}.$$
The existence of such a weak solution $\omega_{n}\in L^\infty([0,T_{n-1}],L^1\cap L^\infty(\RR))$ follows from classical arguments on 
linear transport equation. For the uniqueness issue, we refer to Lemma~\ref{appendix:1} (derived from \cite[Chapter 1]{MiotThesis}), 
which proves that any field $v_{n-1}$ given as above, with $u_{n-1}$ satisfying the regularity property \eqref{regu:field} 
and with the maps $h_{k,n-1}$ Lipschitz continuous and not intersecting on $[0,T_{n-1}]$, 
has the renormalization property (see \cite[Definition 1.5]{DeLellis} for the definition of renormalization). 
By the usual arguments for linear transport equations, see \cite{dip-lions}, uniqueness therefore holds in $L^\infty([0,T_{n-1}], L^1\cap L^\infty(\RR))$ 
for the linear transport equation associated to $v_{n-1}$. 

 Moreover, it follows from Corollary~\ref{coro:appendix} in the Appendix that the norms $\|\omega_{n}(t,\cdot)\|_{L^p}$ are constant in time 
 for all $p$, therefore we get the desired bound for $\|\omega_{n}\|_{L^1\cap L^\infty}$. 
 Recalling Proposition~\ref{prop:reg-u}, it follows that $\|u_n\|_{L^\infty}\leq C.$ Furthermore, the weak time continuity for $\omega_n$ established in \cite[Proposition 4.1]{LM} (see also \cite{MiotThesis}) implies that $u_n$ is uniformly continuous in space-time.

Next, in view of the almost-Lipschitz property and the time regularity for $u_{n-1}$, 	Osgood's lemma ensures that there exists a unique solution 
$\{h_{k,n}\}$ to \eqref{syst:3-ODE} on some maximal open interval $I_n\subset [0,T_{n-1}]$ such that
\[
\min_{k\neq p} |h_{k,n}(t)-h_{p,n}(t)| > 0, \quad \forall t\in I_n.
 \]
We consider then $T_n\leq T_{n-1}$ such that $[0,T_n)\subset I_n$ and $T_n$ is the largest time for which \eqref{syst:3-dist} holds:
\[
\min_{k\neq p} |h_{k,n}(t)-h_{p,n}(t)| > \rho,\quad \forall t\in [0,T_n).
 \]
Taking the scalar product of \eqref{syst:3-ODE} with $\dot{h}_{k,n}(t)$ and using Proposition~\ref{prop:reg-u} and the lower bound \eqref{syst:3-dist}, we get on $[0,T_{n}]$:
\[
m_{k} \frac{d |\dot{h}_{k,n}(t)|^2}{dt} \leq C |\dot{h}_{k,n}(t)|\leq C |\dot{h}_{k,n}(t)|^2+1
\]
hence we deduce by Gronwall that 
\begin{equation}\label{ineg:d-31}
|\dot{h}_{k,n}(t)|\leq C \quad \text{on } [0,\min (T_{n},1)], 
\end{equation}
(where we emphasize that $C$ depends on $\rho$), so 
\[
T_{n}\geq \widetilde{T}:= \min\Big(1, \frac{\min_{k\neq p} |h_{k,0}-h_{p,0}| -\rho}{2C}\Big).
\]

It remains to study the case where all $\gamma_k$ have the same sign (say positive), where we have to derive an inequality like \eqref{ineg:d-31} which is independent of $\rho$. We fix $T>0$ and we assume that $T_{n-1}=T$. We want to show that $T_n=T$.
In the sequel of this proof, $C$ depends only on the initial data and $T$. We seek for a uniform lower bound for the distances $|h_{k,n}-h_{p,n}|$ and for a uniform upper bound for $|\dot{h}_{k,n}|$ on $[0,T_n)$. To this aim, we introduce the quantity
\begin{equation}\label{def-Hn}
\mathcal{H}_n(t)=\sum_{j\neq k} \frac{\gamma_j \gamma_k}{2\pi} \ln |h_{j,n}(t)-h_{k,n}(t)|-\sum_{k=1}^N m_k |\dot{h}_{k,n}(t)|^2,
\end{equation}
defined on $[0,T_n]$. As we shall see below, bounding $|\mathcal{H}_n|$ uniformly with respect to $n$ allows to obtain the desired bounds on $|\dot{h}_{k,n}|$ and on $|h_{k,n}-h_{p,n}|$. In order to obtain a suitable estimate on $|\mathcal{H}_n|$, we compute the time derivative:
\begin{equation*}
\begin{split}
\dot{\mathcal{H}}_n&=\sum_{j\neq k}\frac{\gamma_j \gamma_k}{2\pi}
(\dot{h}_{k,n}- \dot{h}_{j,n})\cdot \frac{h_{k,n}-h_{j,n}}{|h_{k,n}-h_{j,n}|^2}-2 \sum_{k=1}^N m_k \dot{h}_{k,n}\cdot \ddot{h}_{k,n}\\
&= 2 \sum_{k=1}^N \dot{h}_{k,n}\cdot \left(\gamma_k \sum_{j\neq k} \frac{\gamma_j}{2\pi} \frac{h_{k,n}-h_{j,n}}{|h_{k,n}-h_{j,n}|^2}-m_k \ddot{h}_{k,n}\right),
\end{split}
\end{equation*}
where we have exchanged $j$ and $k$ in order to pass from the first line to the second one. Thus by \eqref{syst:3-ODE}, it only remains:
\begin{equation*}
\dot{\mathcal{H}}_n(t)=2\sum_{k=1}^N \gamma_k \dot{h}_{k,n}(t)\cdot u_{n-1}(t,h_{k,n}(t))^\perp.
\end{equation*}
Using the bound $\|u_{n-1}\|_{\infty}\leq C$ we get
\begin{equation}\label{ineg:H-1}
|\dot{\mathcal{H}}_n(t)| \leq C \sum_{k=1}^N \gamma_k |\dot{h}_{k,n}(t)|.
\end{equation}
On the other hand, we notice that for all $k$, for all $t\in [0,T_n]$, using that $\ln |x-y|\leq |x|+|y|$ we have
\begin{align*}
|\dot{h}_{k,n}(t)|\leq& C |{\mathcal{H}}_n(t)|^{1/2}+ C\left( \sum_{j=1}^N \gamma_j |h_{j,n}(t)|\right)^{1/2}\\
\leq& C |{\mathcal{H}}_n(t)|^{1/2}
+ C\left( \sum_{j=1}^N \gamma_j |h_{j,n}(0)|+\sum_{j=1}^N \gamma_j \int_0^t |\dot{h}_{j,n}(\tau)|\,d\tau\right)^{1/2}\\
\leq& C |{\mathcal{H}}_n(t)|^{1/2}
+ C\left( \sum_{j=1}^N \gamma_j |h_{j,0}|+\max_k \max_{ [0,T_n]} |\dot{h}_{k,n}|\right)^{1/2},
\end{align*}
hence 
\begin{equation*}
\begin{split}
\max_k \max_{ [0,T_n]} |\dot{h}_{k,n}|\leq &C \max_{ [0,T_n]} |{\mathcal{H}}_n|^{1/2}+C\sqrt{1+\max_k \max_{[0,T_n]} |\dot{h}_{k,n}|}\\
\leq&C \max_{ [0,T_n]} |{\mathcal{H}}_n|^{1/2}+C+ \frac{\max_k \max_{[0,T_n]} |\dot{h}_{k,n}|}{2},
\end{split}
\end{equation*}where we have used that $C\sqrt{1+a}\leq C\sqrt{2}+C^2+a/2$ for $a>0$.
Therefore
\begin{equation}\label{ineg:H-2}
\begin{split}
\max_k \max_{ [0,T_n]} |\dot{h}_{k,n}|\leq C \max_{ [0,T_n]} |{\mathcal{H}}_n|^{1/2}+C.
\end{split}
\end{equation}
Inserting \eqref{ineg:H-2} in \eqref{ineg:H-1} we also obtain 
\begin{align*}
\max_{[0,T_n]}|\dot{\mathcal{H}}_n|
\leq& C \left(1+\max_{ [0,T_n]}|{\mathcal{H}}_n|^{1/2}\right),
\end{align*}
therefore we get
\[
\max_{[0,T_n]}|{\mathcal{H}}_n|
\leq C\quad \text{and} \quad \max_{[0,T_n]}|\dot{\mathcal{H}}_n|
\leq C.
\]
Coming back to \eqref{ineg:H-2}, it follows that
\begin{equation}\label{ineg:d-2}
\max_k \max_{ [0,T_n]} |\dot{h}_{k,n}|\leq C ,
\end{equation}
so that
\begin{equation*}
\max_k \max_{ [0,T_n]} |{h}_{k,n}|\leq C .
\end{equation*}
Finally, by the definition of $\mathcal{H}_n(t)$ and by the previous bounds, using again that $\ln |x-y|\leq |x|+|y|$ we have for all $j\neq k$:
\begin{align*}
\gamma_j \gamma_k\ln |h_{j,n}(t)-h_{k,n}(t)|&\geq -2\pi |\mathcal{H}_n(t)| 
-\sum_{p,\ell =1}^N \gamma_p \gamma_\ell \left(|h_{p,n}(t)|+ |h_{\ell,n}(t)|\right)\\
&\geq -C,
\end{align*}
which means that there exists $\rho>0$ depending only on $T$ and the initial data such that
\begin{equation*}
\min_{j\neq k} |h_{j,n}(t)-h_{k,n}(t)|> \rho>0, \quad \forall t\in [0,\min (T_{n},T)).
\end{equation*}
Choosing this $\rho$ from the beginning, we conclude that $T_n= T$, and that the proposition is proved.
\end{proof}

\medskip

\textbf{Step 2: Passing to the limit} We only sketch the subsequent arguments. 
By the previous estimates, extracting if necessary, we find that $\{\omega_n\}_{n\in \N}$ converges to some $\omega$ in $L^\infty$ weak-$\ast$ on $[0,\widetilde{T}]\times \RR$. Moreover, setting $u=K\ast \omega$, we infer that $\{u_n\}_{n\in \N}$ converges to $u$ locally uniformly on $[0,\widetilde{T}]\times \RR$ (see for instance \cite[Sect. 6.1]{GLS1}). On the other hand, the bounds \eqref{syst:3-dist}-\eqref{ineg:d-31} (or \eqref{ineg:d-2}) imply that each sequence $\{\ddot{h}_{k,n}\}_{n\in \N}$ is uniformly bounded on $[0,\widetilde{T}]$. By Ascoli's theorem, extracting again if necessary, we obtain that each $\{(h_{k,n},\dot{h}_{k,n})\}_{n\in \N}$ converges uniformly to some $(h_k,\dot{h}_k)$ on $[0,\widetilde{T}]$, and passing to the limit in \eqref{syst:3-ODE}, we see that the points $\{h_k\}$ satisfy the desired system of ODE in \eqref{syst:1}. Note that in particular that they satisfy
\begin{equation}
\label{ineg:d-5}
\max_{k}\max_{[0,\widetilde{T}]}|h_k|\leq C,\quad \min_{j\neq k} \min_{ [0,\widetilde{T}]} |h_{j}-h_{k}|\geq \rho>0
\end{equation}
and
\begin{equation}\label{ineg:d-6}
\max_k \max_{ [0,\widetilde{T}]} |\dot{h}_{k}|\leq C.
\end{equation}

Finally, coming back to \eqref{syst:3}, we can pass to the limit exploiting the previous types of convergence to show that $\omega$ is a weak solution of the first PDE in \eqref{syst:1} on $[0,\widetilde{T}]$.

Iterating this construction we reach existence up to the first time of collisions.

If all the circulations have the same sign, we take $T>0$, and we can replace $\widetilde{T}$ by $T$ in all the arguments above since for all $n$ we have a solution $\omega_n$ and $\{h_{k,n}\}$ on $[0,T_n=T]$. This shows that no collision occurs in finite time.

\section{Proof of Theorem~\ref{theorem:main-several-existence-lagrangian}}
\label{sec:lagrangian}

In all this section, $\omega$ denotes any weak solution of \eqref{syst:1-2} on $[0,T]$, where $\{h_k\}$ are given trajectories belonging in $W^{2,\infty}([0,T])$. We assume the analog initial condition as
\eqref{syst:2}:
\begin{equation}
 \label{syst:ini-3}
\omega(0,\cdot)=\omega_0\in L^\infty(\RR)\text{ compactly supported in some } B(0,R_0). 
\end{equation}

We assume that no collision occurs, i.e.
\begin{equation}\label{syst:4-dist}
\min_{t\in [0,T]} \min_{k\neq p} |h_{k}(t)-h_{p}(t)|\geq \rho
\end{equation}
for some $\rho>0$. The purpose of this section is to show Theorem~\ref{theorem:main-several-existence-lagrangian}. We emphasize that the proof does not use the dynamics of $\{h_k\}$.

\subsection{Regular Lagrangian flow}\label{subsec:lagrangian}

We show here that there exists a unique regular Lagrangian flow as defined in Definition~\ref{def:1}.

\medskip

Recall the general following abstract result by Ambrosio \cite[Theorems 3.3 and 3.5]{ambrosio}. Given a vector field $v$ in $L^1_\text{loc}([0,T]\times \R^d)$, if existence and uniqueness for the continuity equation
$$\partial_t \omega+\text{div}(v\omega)=0, \quad \omega(0, \cdot)=\omega_0\in L^1\cap L^\infty$$
 hold in $L^\infty([0,T],L^1\cap L^\infty)$ then the regular Lagrangian flow $X$ for $v$ exists and is unique, and the unique solution is then given by $\omega(t,\cdot)=X(t,\cdot)_\#\omega_0$. 

\medskip
In order to apply this result to the present setting, we introduce the divergence-free field
\begin{equation}\label{def:field}
v(t,x)=u(t,x)+\sum_{j=1}^N \gamma_j K(x-h_j(t)).
\end{equation} 
By Corollary~\ref{coro:appendix} in the Appendix, the transport equation associated to $v$ admits a unique solution (which is renormalized by Lemma~\ref{appendix:1}: for any continuous function $\beta$ growing not too fast at infinity, the function $\beta(\omega)$ is also a solution). Therefore Ambrosio's result yields the existence and uniqueness of the regular Lagrangian flow $X$ associated to $v$, and we have $\omega(t,\cdot)=X(t,\cdot)_\#\omega_0$. This proves the first claim of Theorem~\ref{theorem:main-several-existence-lagrangian}. 

\medskip
Again by Corollary~\ref{coro:appendix}, the renormalization property ensures that 
\begin{equation}\label{eq:norms}\|\omega(t,\cdot)\|_{L^p}=\|\omega_0\|_{L^p}, \quad 1\leq p\leq +\infty.
\end{equation}

We derive first the following property:
\begin{proposition}\label{prop:ineg:flot}
There exists $C$ depending only on $T$, on $\|h_k\|_{L^\infty([0,T])}$ and on the initial data such that 
\begin{equation*}\label{ineg:flot}
\sup_{t\in [0,T]}|X(t,x)|\leq |x|+ C, \quad \text{ for a.e. } x\in \RR.
\end{equation*}
\end{proposition}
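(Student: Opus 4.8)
The plan is to exploit the fundamental orthogonality property $z\cdot K(z)=0$ of the Biot--Savart kernel together with the boundedness of $u$, in order to control the growth of $|X(t,x)|$ \emph{in the region far from the point vortices}; in the region close to them there is nothing to prove, since there $|X(t,x)|$ is automatically bounded by $2\max_k\|h_k\|_{L^\infty([0,T])}$. No information on the dynamics of the $h_k$ is used, only the a priori bound $\|h_k\|_{L^\infty([0,T])}\le M$, consistently with the remark preceding the proposition.

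First I would record the ingredients. By Proposition~\ref{prop:reg-u} together with the conservation of norms \eqref{eq:norms}, $\|u\|_{L^\infty([0,T]\times\RR)}\le C$. Fix a representative $x$ for which $t\mapsto X(t,x)$ is absolutely continuous and solves $\dot X(t,x)=v(t,X(t,x))$ (Definition~\ref{def:lagrangian-flow}); set $y(t)=X(t,x)$ and $M=\max_k\|h_k\|_{L^\infty([0,T])}$, which we may assume positive. Then $t\mapsto|y(t)|$ is absolutely continuous and, for a.e.\ $t$,
\[
\frac{d}{dt}|y(t)|^{2}=2\,y(t)\cdot u(t,y(t))+2\sum_{j=1}^{N}\gamma_j\,y(t)\cdot K\big(y(t)-h_j(t)\big).
\]
The key algebraic observation is that
\[
y\cdot K(y-h_j)=(y-h_j)\cdot K(y-h_j)+h_j\cdot K(y-h_j)=h_j\cdot K(y-h_j),
\]
because $z\cdot K(z)=\tfrac{1}{2\pi}\tfrac{z\cdot z^{\perp}}{|z|^{2}}=0$. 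Hence, whenever $|y(t)|\ge 2M$ we have $|y(t)-h_j(t)|\ge|y(t)|-M\ge|y(t)|/2$, and therefore $|y(t)\cdot K(y(t)-h_j(t))|\le M\cdot\tfrac{1}{2\pi}\cdot\tfrac{2}{|y(t)|}\le\tfrac{1}{2\pi}$. Combining this with $|y\cdot u|\le|y|\,\|u\|_{L^\infty}$ gives a constant $C_1$, depending only on $\|u\|_{L^\infty}$, the $\gamma_j$ and $M$, such that $\frac{d}{dt}|y(t)|^{2}\le 2C_1|y(t)|$, i.e.\ $\frac{d}{dt}|y(t)|\le C_1$, at a.e.\ $t$ with $|y(t)|\ge 2M$.

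It then remains a routine continuity argument to convert this far-region bound into the global one. If $|y(s)|>2M$ on all of $[0,t]$, one integrates the differential inequality directly and gets $|y(t)|\le|x|+C_1 T$. Otherwise one sets $\tau=\sup\{s\in[0,t]:|y(s)|\le 2M\}$; by continuity $|y(\tau)|\le 2M$, and on $(\tau,t]$ one has $|y(s)|>2M$, so integrating again yields $|y(t)|\le 2M+C_1 T$. In all cases $|y(t)|\le\max(|x|,2M)+C_1 T\le|x|+C$, which is the asserted bound after renaming the constant; since this holds for a.e.\ $x$, the proposition follows.

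The only genuine difficulty — the ``hard part'', modest as it is — is exactly the singularity of the vector field $v$ along the trajectories $h_k$, which forbids the crude estimate $|\dot X|\le\|v\|_{L^\infty}$. The cancellation $y\cdot K(y-h_j)=h_j\cdot K(y-h_j)$ removes the worst part of this singularity, and then restricting to the far region $|y|\ge 2M$, where $|y-h_j|$ is comparable to $|y|$, makes the remaining contribution harmless.
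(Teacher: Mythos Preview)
Your proof is correct, and its near/far splitting is the same strategy as the paper's, but your execution in the far region is more elaborate than necessary. The paper simply introduces the tubular neighborhood $\Sigma=\bigcup_{t\in[0,T]}\bigcup_{k}B(h_k(t),1)\subset B(0,C_1)$ and observes that the \emph{full} velocity $v$ is uniformly bounded, say by $C_2$, outside $\Sigma$ (since $|K(z)|\le 1/(2\pi)$ for $|z|\ge 1$); hence $t\mapsto X(t,x)$ is $C_2$-Lipschitz whenever it is outside $\Sigma$, and trivially bounded by $C_1$ whenever it is inside, giving $|X(t,x)|\le |x|+C_1+C_2T$ immediately. Your argument reaches the same conclusion but goes through the orthogonality identity $y\cdot K(y-h_j)=h_j\cdot K(y-h_j)$ to bound only the \emph{radial} component of $v$; this is a nice observation but unnecessary here, since in your far region $|y|\ge 2M$ one already has $|y-h_j|\ge M$ and hence $|K(y-h_j)|\le 1/(2\pi M)$, so the whole velocity is bounded there anyway. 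The cancellation buys nothing extra in this particular proposition; it would matter only if one needed estimates uniform as $h_j\to\infty$, which is not the case.
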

 
 \begin{proof} Let us introduce the set of tubular neighborhoods 
 $$\Sigma:=\bigcup_{t\in [0,T]}\bigcup_{k=1}^N B(h_{k}(t),1),$$
 which is bounded set, let say included in a ball $B(0,C_{1})$. Outside $\Sigma$, the velocity $v$ is uniformly bounded by $C_{2}$ (see Proposition~\ref{prop:reg-u}).
 
 Therefore, for any $x\in \mathbb{R}^2$ such that $X(\cdot,x)$ is absolutely continuous on $[0,T]$, the map $t\to X(t,x)$ starts from $x$, has a Lipschitz variation outside $\Sigma$ and 
 can evolve with a diverging velocity inside $\Sigma$, but remaining bounded.
 
Thus, setting $C= C_{1}+C_{2}T$ proves the proposition.
\end{proof}

The following corollary gives the second point in Theorem~\ref{theorem:main-several-existence-lagrangian}.
\begin{corollary}
 \label{coro:support-several} The vorticity $\omega(t,\cdot)$ is compactly supported for 
all $t\in [0,T]$, with $\supp(\omega(t,\cdot))\subset B(0,R_T)$ for some $R_T$ depending only on the initial data, on $\|h_k\|_{L^\infty([0,T])}$ and on $T$.
\end{corollary}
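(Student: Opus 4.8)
The plan is to deduce this corollary directly from Proposition~\ref{prop:ineg:flot} together with the transport identity $\omega(t,\cdot)=X(t,\cdot)_\#\omega_0$ established earlier in this section. First I would recall that $\omega_0$ is supported in $B(0,R_0)$ by \eqref{syst:ini-3}. By definition of the push-forward, for any test function $\psi\in C_c(\RR)$ we have $\int_{\RR}\psi(y)\,\omega(t,y)\,dy=\int_{\RR}\psi(X(t,x))\,\omega_0(x)\,dx$, and since $\omega_0$ vanishes outside $B(0,R_0)$ this integral only sees the values $X(t,x)$ for $x\in B(0,R_0)$.

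Next I would invoke Proposition~\ref{prop:ineg:flot}: there is a constant $C$ (depending only on $T$, $\|h_k\|_{L^\infty([0,T])}$ and the initial data) such that for a.e.\ $x$, $\sup_{t\in[0,T]}|X(t,x)|\leq |x|+C$. Hence for a.e.\ $x\in B(0,R_0)$ and all $t\in[0,T]$ we have $|X(t,x)|\leq R_0+C=:R_T$, so that $X(t,x)\in B(0,R_T)$ for a.e.\ $x\in B(0,R_0)$. Consequently, for any $\psi\in C_c(\RR)$ supported outside $\overline{B(0,R_T)}$, we get $\psi(X(t,x))=0$ for a.e.\ $x\in B(0,R_0)$, whence $\int_{\RR}\psi(y)\,\omega(t,y)\,dy=0$. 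Since this holds for all such $\psi$, we conclude that $\omega(t,\cdot)$ is supported in $\overline{B(0,R_T)}\subset B(0,R_T+1)$ (adjusting $R_T$ if one wants an open ball), which is exactly the claim with $R_T$ depending only on the initial data (through $R_0$), on $\|h_k\|_{L^\infty([0,T])}$ and on $T$.

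There is essentially no obstacle here; the only mild point of care is that the estimate in Proposition~\ref{prop:ineg:flot} holds only for a.e.\ $x$, but this is harmless since the push-forward integral is insensitive to null sets and $\omega_0\in L^\infty$. For completeness one may also note that the same argument shows that the support bound is uniform over $[0,T]$, which is what is needed in the sequel (for instance to apply Propositions~\ref{prop:ineq:partialt-several} and~\ref{prop:reste-vitesse} with a fixed radius $R=R_T$).
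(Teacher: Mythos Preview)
Your argument is correct and follows exactly the paper's approach: use the push-forward identity $\omega(t,\cdot)=X(t,\cdot)_\#\omega_0$ together with Proposition~\ref{prop:ineg:flot} to obtain $R_T=R_0+C$. You have simply spelled out in more detail (via test functions and the a.e.\ caveat) what the paper states in one sentence.
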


\begin{proof}
We have $\omega(t,\cdot)=X(t,\cdot)_\#\omega_0$ and $\omega_0$ is compactly supported in $B(0,R_0)$, so it follows from Proposition~\ref{prop:ineg:flot} that $\omega(t,\cdot)$ is compactly supported for all $t\in [0,T]$, with $\supp(\omega(t,\cdot))\subset B(0,R_T)$ for $R_T= R_0+C$ (with $C$ given in Proposition~\ref{prop:ineg:flot}).
\end{proof}

\subsection{Vorticity trajectories}

For the third point in Theorem~\ref{theorem:main-several-existence-lagrangian}, we have to show that for almost every $x\in \RR$, we have $X(t,x)\neq h_k(t)$ for all $t\in [0,T]$ and for all $k=1,\ldots,N$. 

\medskip

For almost every $x\in \RR\setminus \cup_j \{ h_{j,0}\}$ such that $X(\cdot,x)$ is an absolutely continuous solution on $[0,T]$ to the ODE with field $v$ defined in \eqref{def:field}, by time continuity, there exists $T^\ast(x)$ such that 
$$\min_j \min_{t\in [0,T^\ast(x)]}|X(t,x)-h_j(t)|>0.$$

We may then consider the local microscopic energies near the points $h_k(t)$ on $[0,T^\ast(x)]$:
\begin{equation}\label{def-Fk}
F_k(t)=\sum_{j=1}^N \frac{\gamma_j}{2\pi} \ln |X(t,x)-h_j(t)|+\varphi_\eps(t,X(t,x))+\langle X(t,x),\dot{h}_k^\perp(t)\rangle,
\end{equation}
where we recall that $\varphi_\eps$ denotes the regularization of the stream function, see \eqref{def:stram-2}.

\medskip

 On the other hand, the result in \cite[Proposition 4.1]{LM} states the continuity of $u$ on $[0,T]\times \RR$. Therefore, the field $v(\cdot,X(\cdot,x))$ is continuous on 
 $[0,T^\ast(x)]$. So we infer that $X(\cdot,x)$ is differentiable on $[0,T^\ast(x)]$ with $\frac{d}{dt}X(t,x)=v(t,X(t,x))$. This enables to perform the following estimate on the local energies.

\begin{proposition}
\label{prop:Fi}
We have for $t\in [0,T^\ast(x)]$ and for all $k=1,\ldots, N$,
\begin{equation*}
|F'_k(t)|\leq C\Big(1+|x|+\eps |X(t,x)-h_k(t)|^{-1}+\sum_{j\neq k}|X(t,x)-h_j(t)|^{-1}\Big).
\end{equation*}
\end{proposition}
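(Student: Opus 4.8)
The plan is to differentiate $F_k$ along the trajectory $X(t,x)$ and exploit the cancellation of the most singular terms, exactly as in the formal one-point computation presented in the introduction, but keeping track of the $\eps$-regularization. First I would write $X = X(t,x)$ and $h_j = h_j(t)$ for brevity, recall that $\dot X = v(t,X) = u(t,X) + \sum_j \gamma_j K(X - h_j)$ (justified on $[0,T^\ast(x)]$ by the differentiability established just above the statement), and compute
\[
F_k'(t) = \sum_{j=1}^N \frac{\gamma_j}{2\pi}\, \frac{(X-h_j)\cdot(\dot X - \dot h_j)}{|X-h_j|^2} + \partial_t\varphi_\eps(t,X) + \langle \dot X, \nabla\varphi_\eps(t,X)\rangle + \langle \dot X, \dot h_k^\perp\rangle + \langle X, \ddot h_k^\perp\rangle.
\]

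**Handling the singular sum.** I would split the first sum into the $j=k$ term and the $j\neq k$ terms. For $j\neq k$ the terms are directly bounded by $C(1 + |X-h_j|^{-1})$ once one uses $\|\dot h_j\|_{L^\infty}\leq C$ and $|\dot X| = |v(t,X)|$, which is bounded by $C$ on any compact avoiding the $h_j$'s but a priori only by $C(1+|X-h_k|^{-1}+\sum_{j\neq k}|X-h_j|^{-1})$ near the points; multiplying this by $|X-h_j|^{-1}$ for $j\neq k$ gives the stated terms plus cross terms that are again of the same form (using $|X-h_j|^{-1}|X-h_\ell|^{-1}\leq |X-h_j|^{-2}+|X-h_\ell|^{-2}$ and absorbing the $j\neq k$ pieces). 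The key point is the $j=k$ term: there I would use Proposition~\ref{prop:reste-vitesse}, which gives $\nabla^\perp\varphi_\eps = u + R_\eps$ with $\|R_\eps\|_{L^\infty}\leq \eps C$, so that $\frac{\gamma_k}{2\pi}\frac{X-h_k}{|X-h_k|^2} = \gamma_k K(X-h_k)^\perp\cdot$(rotation) and, crucially, $-\dot X^\perp + (u(t,X)+R_\eps^\perp(t,X))$ equals (minus) the full singular velocity at $X$ up to the $j\neq k$ contributions; matching this against $\langle \dot X,\nabla\varphi_\eps\rangle = \langle \dot X, u^\perp + R_\eps^\perp\rangle$ and the $\langle\dot X,\dot h_k^\perp\rangle$ term, the $|X-h_k|^{-1}$ contributions cancel up to an error of size $\eps\,C\,|X-h_k|^{-1}$ from $R_\eps$ and up to $j\neq k$ terms of the controlled form.

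**Collecting bounded terms.** After the cancellation, what remains is, schematically, $-\langle u^\perp(t,X),\dot h_k\rangle + \partial_t\varphi_\eps(t,X) + \langle X,\ddot h_k^\perp\rangle$ plus the admissible singular remainders. Here $\|u\|_{L^\infty}\leq C$ by Proposition~\ref{prop:reg-u} and \eqref{eq:norms}, $\|\dot h_k\|_{L^\infty},\|\ddot h_k\|_{L^\infty}\leq C$ since $h_k\in W^{2,\infty}$, $\|\partial_t\varphi_\eps\|_{L^\infty}\leq C_R$ by Proposition~\ref{prop:ineq:partialt-several} (with $R=R_T$ from Corollary~\ref{coro:support-several}, so $C_R$ is an admissible $C$), and $|\langle X,\ddot h_k^\perp\rangle|\leq C|X|\leq C(1+|x|)$ by Proposition~\ref{prop:ineg:flot}. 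Assembling all of this yields exactly
\[
|F_k'(t)| \leq C\Big(1 + |x| + \eps\,|X(t,x)-h_k(t)|^{-1} + \sum_{j\neq k}|X(t,x)-h_j(t)|^{-1}\Big),
\]
as claimed.

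**Main obstacle.** The delicate point is the bookkeeping of the $j=k$ cancellation once the stream function is replaced by its regularization $\varphi_\eps$: one must verify that replacing $\nabla^\perp\varphi$ by $u+R_\eps$ does not spoil the exact cancellation of $|X-h_k|^{-1}$, and that the leftover is genuinely $O(\eps|X-h_k|^{-1})$ rather than $O(|X-h_k|^{-1})$ — this is why the $\eps$-term appears in the estimate and is precisely what allows the later integration argument (dividing by $\eps$ and sending $\eps\to 0$) to go through. The secondary nuisance is controlling $|\dot X|$ near $h_k$ when it multiplies $|X-h_j|^{-1}$ for $j\neq k$; this is routine once one notes $|v(t,X)|\leq C(1 + \sum_j |X-h_j|^{-1})$ and the $|X-h_j|^{-2}$ terms that arise for $j\neq k$ are acceptable (while no $|X-h_k|^{-2}$ term survives, thanks to the cancellation).
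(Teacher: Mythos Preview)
Your approach has a genuine gap in the treatment of the $j\neq k$ terms. When you bound
\[
\left|\frac{\gamma_j}{2\pi}\frac{(X-h_j)\cdot(\dot X-\dot h_j)}{|X-h_j|^2}\right|\leq C\,|X-h_j|^{-1}\,|\dot X|
\]
and then insert $|\dot X|\leq C\bigl(1+\sum_{\ell}|X-h_\ell|^{-1}\bigr)$, you produce the cross term $|X-h_j|^{-1}|X-h_k|^{-1}$. Your AM--GM step turns this into $|X-h_j|^{-2}+|X-h_k|^{-2}$. The second piece is fatal: it is \emph{not} of order $\eps\,|X-h_k|^{-1}$ and so directly contradicts both the stated inequality and your own claim that ``no $|X-h_k|^{-2}$ term survives.'' (The $|X-h_j|^{-2}$ pieces for $j\neq k$ are also not in the statement, though they would happen to be harmless in the later applications once $|X-h_j|\geq 2\rho/3$.) The cancellation you invoke for the $j=k$ term does nothing to these cross terms, because they come from the $j\neq k$ part of the sum which you have already crudely bounded.

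The remedy, and this is what the paper does, is to observe a \emph{global} orthogonality before splitting off the $j=k$ term: writing $A=\sum_{j=1}^N\frac{\gamma_j}{2\pi}\frac{X-h_j}{|X-h_j|^2}$, the contribution of the point-vortex part of $\dot X$ to the full sum is exactly
\[
\Big\langle A,\;\sum_{m=1}^N\gamma_m K(X-h_m)\Big\rangle=\langle A,A^\perp\rangle=0.
\]
This kills every quadratic term $|X-h_j|^{-1}|X-h_m|^{-1}$ in one stroke. Only \emph{after} this do you isolate the $j=k$ summand and substitute $\frac{\gamma_k}{2\pi}\frac{X-h_k}{|X-h_k|^2}=-\dot X^\perp+u^\perp-\sum_{j\neq k}\frac{\gamma_j}{2\pi}\frac{X-h_j}{|X-h_j|^2}$, which leaves only $\langle\dot X^\perp,R_\eps\rangle$ (giving $\eps\,C\sum_j|X-h_j|^{-1}$) and $\sum_{j\neq k}\bigl\langle\frac{\gamma_j}{2\pi}\frac{X-h_j}{|X-h_j|^2},\dot h_k-\dot h_j\bigr\rangle$ (giving $C\sum_{j\neq k}|X-h_j|^{-1}$), together with the bounded pieces you already identified. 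With this correction your bookkeeping on $R_\eps$, $\partial_t\varphi_\eps$, and $|X|\leq |x|+C$ is fine.
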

In the previous statement, $C$ is independent of $\varepsilon$ whereas $F_k$ depends on $\varepsilon$.

\begin{proof} 
In the subsequent proof we set for clarity:
$$X=X(t,x),\quad u=u(t,X(t,x)), \quad \varphi_\eps=\varphi_\eps(t,X(t,x)),\quad h_k=h_k(t),$$ 
 and we compute on $[0,T^\ast(x)]$
\begin{align*}
F_k'=&\sum_{j=1}^N \left\langle \frac{\gamma_j}{2\pi}\frac{X-h_j}{|X-h_j|^2},\sum_{m=1}^N\gamma_m K(X-h_m)+u-\dot{h}_j\right\rangle \\
&+\partial_t \varphi_\eps+\langle \dot{X},\nabla\varphi_\eps\rangle+\langle \dot{X}, \dot{h}_k^\perp\rangle+\langle X,\ddot{h}_k^\perp\rangle\\
=& \left\langle \sum_{j=1}^N \frac{\gamma_j}{2\pi}\frac{X-h_j}{|X-h_j|^2},\sum_{m=1}^N \frac{\gamma_m}{2\pi}\frac{(X-h_m)^\perp}{|X-h_m|^2} \right\rangle
+\sum_{j=1}^N \left\langle \frac{\gamma_j}{2\pi}\frac{X-h_j}{|X-h_j|^2}, u -\dot{h}_j\right\rangle \\
&+\partial_t \varphi_\eps+\langle \dot{X},\nabla\varphi_\eps\rangle+\left\langle \dot{X}, \dot{h}_k^\perp\rangle+\langle X,\ddot{h}_k^\perp\right\rangle \\
=& \left\langle \frac{\gamma_k}{2\pi}\frac{X-h_k}{|X-h_k|^2},
u-\dot{h}_k\right\rangle 
+\sum_{j\neq k} \left\langle \frac{\gamma_j}{2\pi}\frac{X-h_j}{|X-h_j|^2},
u-\dot{h}_j\right\rangle \\
&+\partial_t \varphi_\eps+\langle \dot{X},\nabla\varphi_\eps\rangle+\left\langle \dot{X}, \dot{h}_k^\perp\right\rangle+\langle X,\ddot{h}_k^\perp\rangle.
\end{align*}
Next, using again that $X$ satisfies the ODE with field $v$ defined in \eqref{def:field}, we have
\begin{equation}\label{eq:flot}
\frac{\gamma_k}{2\pi}\frac{X-h_k}{|X-h_k|^2}=-\dot{X}^\perp+u^\perp-\sum_{j\neq k}\frac{\gamma_j}{2\pi}\frac{X-h_j}{|X-h_j|^2},
\end{equation}
hence we get
\begin{align*}
F'_k=&\langle -\dot{X}^\perp + u^\perp, u-\dot{h}_k\rangle-\sum_{j\neq k}\left\langle \frac{\gamma_j}{2\pi} \frac{X-h_j}{|X-h_j|^2} , u-\dot{h}_k\right\rangle \\
&+\sum_{j\neq k} \left\langle \frac{\gamma_j}{2\pi}\frac{X-h_j}{|X-h_j|^2},
u-\dot{h}_j\right\rangle 
+\partial_t \varphi_\eps+\langle \dot{X},\nabla\varphi_\eps\rangle+\langle \dot{X}, \dot{h}_k^\perp\rangle+\langle X,\ddot{h}_k^\perp\rangle\\
=&\langle \dot{X}^\perp, - u +\nabla^\perp \varphi_\eps \rangle -\langle u^\perp,\dot{h}_k\rangle \\
&+\sum_{j\neq k}\left\langle \frac{\gamma_j}{2\pi} \frac{X-h_j}{|X-h_j|^2} , \dot{h}_k-\dot{h}_j\right\rangle
+\partial_t \varphi_\eps+\langle X,\ddot{h}_k^\perp\rangle.
\end{align*}
Hence, plugging the equality $\nabla^\perp\varphi_\eps=u+R_\eps$, with $R_\eps$ defined in Proposition~\ref{prop:reste-vitesse}, we have
\begin{align*}
F'_k=&\langle \dot{X}^\perp,R_\eps \rangle -\langle u^\perp,\dot{h}_k\rangle +\sum_{j\neq k}\left\langle \frac{\gamma_j}{2\pi} \frac{X-h_j}{|X-h_j|^2} , \dot{h}_k-\dot{h}_j\right\rangle
+\partial_t \varphi_\eps+\langle X,\ddot{h}_k^\perp\rangle.
\end{align*}
By Proposition~\ref{prop:reste-vitesse} together with \eqref{eq:norms} and \eqref{eq:flot}, we have on the one hand
\begin{equation*}
\left|\langle \dot{X}^\perp,R_\eps \rangle\right|\leq C \eps \|\omega_0\|_{L^\infty}\ \left( \sum_{j=1}^N |X-h_j|^{-1}+\|u\|_{L^\infty}\right).
\end{equation*}

On the other hand, as $h_k \in W^{2,\infty}$, we obtain by Proposition~\ref{prop:ineg:flot}
\begin{multline*}
\Big|-\langle u^\perp ,\dot{h}_k\rangle 
+\sum_{j\neq k}\left\langle \frac{\gamma_j}{2\pi} \frac{X-h_j}{|X-h_j|^2} , \dot{h}_k-\dot{h}_j\right\rangle 
 +\langle X,\ddot{h}_k^\perp\rangle\Big|\\
\leq C\Big(\|u\|_{L^\infty}+\sum_{j\neq k}|X-h_j|^{-1}+|x| +1 \Big).
\end{multline*}
Finally, recalling that $\|\partial_t \varphi_\eps\|_{L^\infty}\leq C$ by Proposition~\ref{prop:ineq:partialt-several} and that 
$\|u\|_{L^\infty}\leq C$ by Proposition~\ref{prop:reg-u}, the conclusion follows.
\end{proof}

\begin{corollary}\label{coro:inf-several}For almost every $x$ in $\RR$ we can take $T^\ast(x)=T$, more precisely
$$\min_j \min_{t\in [0,T]}|X(t,x)-h_j(t)|>0,\quad \text{for a.e. } x\in \RR.$$
\end{corollary}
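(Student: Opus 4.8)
The plan is to argue by contradiction. The assertion $T^\ast(x)=T$ for a.e.\ $x$ amounts to saying that for a.e.\ $x$ there is no $t\in[0,T]$ and no index $k$ with $X(t,x)=h_k(t)$ (the displayed minimum being then positive by compactness of $[0,T]$). So suppose that for a positive-measure set of $x$ such a collision occurs, and fix one such $x$ which is moreover \emph{good}, i.e.\ $X(\cdot,x)$ is an absolutely continuous solution of $\dot X=v(\cdot,X)$ on $[0,T]$, differentiable on the relevant interval as recalled before Proposition~\ref{prop:Fi}, and $x\notin\bigcup_j\{h_{j,0}\}$ (a.e.\ $x$ is good). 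Let $t_0=t_0(x)\in(0,T]$ be the first collision time; by continuity and the $\rho$-separation of the $h_j$ there is exactly one $k$ with $X(t_0,x)=h_k(t_0)$, and $d(t):=|X(t,x)-h_k(t)|$ is continuous and strictly positive on $[0,t_0)$ with $d(t)\to0$ as $t\to t_0^-$. We will contradict this.

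For this fixed $x$, all constants below depend on $x$ but not on $\eps$. By Proposition~\ref{prop:ineg:flot} we have $|X(t,x)|\leq|x|+C$, hence $|\langle X(t,x),\dot h_k^\perp(t)\rangle|\leq C(1+|x|)$ and, by \eqref{est:phi}, $|\varphi_\eps(t,X(t,x))|\leq C\ln(2+|x|)$ uniformly in $\eps$. Since $|X(t_0,x)-h_j(t_0)|=|h_k(t_0)-h_j(t_0)|\geq\rho$ for $j\neq k$, there is $\tau\in(0,t_0)$ with $|X(t,x)-h_j(t)|\geq\rho/2$ on $[\tau,t_0)$ for all $j\neq k$, and since $|X(\cdot,x)-h_j(\cdot)|$ is continuous and positive on the compact $[0,\tau]$ we obtain, on $[\tau,t_0)$, the bounds $|X(t,x)-h_j(t)|^{-1}\leq M(x)$ and $\bigl|\ln|X(t,x)-h_j(t)|\bigr|\leq M(x)$ for $j\neq k$. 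Isolating the $j=k$ term in \eqref{def-Fk} and using these bounds then yields, on $[\tau,t_0)$,
\begin{equation*}
|\ln d(t)|\leq\frac{2\pi}{|\gamma_k|}\bigl(|F_k(t)|+C(x)\bigr),
\end{equation*}
which is where $\gamma_k\neq0$ is used (the case $\gamma_k=0$ is treated below).

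I would then integrate the estimate of Proposition~\ref{prop:Fi}. Using the bounds just listed, $\|\partial_t\varphi_\eps\|_{L^\infty}\leq C$ (Proposition~\ref{prop:ineq:partialt-several}) and $|X-h_j|^{-1}\leq M(x)$ for $j\neq k$, we get for $t\in[\tau,t_0)$ and $\eps\leq1$
\begin{equation*}
|F_k(t)|\leq A(x)+B(x)\,\eps\int_\tau^t d(s)^{-1}\,ds,
\end{equation*}
with $A(x),B(x)$ finite and independent of $\eps$; in particular $|F_k(0)|\leq C(x)$ uniformly in $\eps$ because $|\varphi_\eps(0,x)|\leq C\ln(2+|x|)$, and $\eps\int_0^\tau d(s)^{-1}\,ds=O(\eps)$ since $d$ is bounded below on the compact $[0,\tau]$. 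Combining with the inequality of the preceding paragraph and setting $G(t)=\int_\tau^t d(s)^{-1}\,ds$, we arrive on $[\tau,t_0)$ at a Bihari-type differential inequality with $\eps$-independent constants, $G'(t)=d(t)^{-1}\leq e^{P(x)}e^{Q(x)\eps G(t)}$. Integrating it from $\tau$ gives $e^{-Q(x)\eps G(t)}\geq1-Q(x)\,\eps\,e^{P(x)}T$, so choosing $\eps=\eps(x)$ small enough that $Q(x)\,\eps\,e^{P(x)}T<\tfrac12$ keeps $G$ bounded on $[\tau,t_0)$, whence $d(t)^{-1}=G'(t)\leq2e^{P(x)}$, i.e.\ $d(t)\geq c(x)>0$ there. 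This contradicts $d(t)\to0$; hence no collision occurs, that is, $T^\ast(x)=T$, for a.e.\ $x$, and by compactness of $[0,T]$ the displayed minimum is then strictly positive for a.e.\ $x$.

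The main obstacle is precisely the term $\eps\,|X-h_k|^{-1}$ in Proposition~\ref{prop:Fi}, which rules out a direct uniform bound on $F_k'$: it is absorbed through the Bihari/Gronwall step above and then eliminated by letting $\eps\to0$. Finally, for an index $k$ with $\gamma_k=0$ the functional $F_k$ no longer controls $d$, and one argues instead that $\int_0^T|X(s,x)-h_k(s)|^{-1}\,ds<\infty$ for a.e.\ $x$ — by the bounded-compression property of the regular Lagrangian flow (Definition~\ref{def:lagrangian-flow}), Fubini and the layer-cake formula — while a collision $X(t_0,x)=h_k(t_0)$ would keep the trajectory close to $h_k$ on a neighbourhood of $t_0$, hence far from the other $h_j$, hence moving with bounded velocity (the $\gamma_kK(\cdot-h_k)$ term being absent), so that $d(s)\leq C|s-t_0|$ and $d(s)^{-1}\notin L^1$ near $t_0$, a contradiction.
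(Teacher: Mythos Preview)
Your proof is correct and reaches the same conclusion, but the route differs from the paper's in an interesting way. The paper argues in two stages: first it shows that once $X(t,x)$ enters a small ball around $h_k$, it cannot exit the $\rho/3$-ball (by integrating $F_k'$ between an entry time $t_n$ and a putative exit time $t_n'$, then letting $\eps\to 0$ to kill the $\eps\int d^{-1}$ term, which is finite on that compact interval); second, once localized, it integrates $F_k'$ again from $t_{n_0}$ to $t$ and lets $\eps\to 0$ to obtain a positive lower bound on $d(t)$. Your argument collapses both steps into a single Bihari--Gronwall estimate: you keep $\eps$ fixed but small (depending on $x$), feed the bound $|\ln d(t)|\leq P(x)+Q(x)\eps G(t)$ back into $G'=d^{-1}$, and integrate the resulting inequality $G'\leq e^{P(x)}e^{Q(x)\eps G}$ directly. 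This is neat and avoids the localization detour; on the other hand, the paper's two-step version yields constants that are manifestly $\eps$-free after the limit, which is exactly what is exploited in the quantitative follow-up Proposition~\ref{prop:dmin-several}.

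Your separate treatment of the case $\gamma_k=0$ via the bounded-compression property of the flow and the observation $d(s)\leq C|s-t_0|$ is correct and a genuine addition: the paper's own argument tacitly divides by $|\gamma_k|$ (see the step from \eqref{est:aout} to the displayed contradiction), so it does not cover this degenerate case.
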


\begin{proof} We argue by contradiction, assuming that $T^\ast(x)=T$ is impossible for some $x\in \RR\setminus\{ h_{k,0}\}$ where the flow exists, so that there exist $k\in \{1,\ldots,N\}$ and $\tilde T<T$ such that $\liminf_{t\to \tilde T} |X(t,x)-h_k(t)|=0$ and $\min_j \min_{t\in [0,T^\ast]}|X(t,x)-h_j(t)|>0$ for any $T^\ast<\tilde T$. We further set $X(t)=X(t,x)$.
Let $t_n\to \tilde T$ such that $|X(t_n)-h_k(t_n)|\to 0$ as $n\to +\infty$. We recall that $\rho$ is defined by \eqref{syst:4-dist}. For $n$ sufficiently large we have 
$|X(t_n)-h_k(t_n)|<\rho/K$, with $K>3$ large to be determined later on.
We take $t'_n$ maximal such that on $[t_n,t'_n)$ we have
 $|X(t)-h_k(t)|<\rho/3$. In particular, by \eqref{syst:4-dist}, for $j\neq k$ we have $|X(t)-h_j(t)|\geq 2\rho/3$ on $[t_n,t'_n)$.
 We assume first that $t'_n< \widetilde{T}$: then we have $|X(t'_n)-h_k(t'_n)|=\rho/3$. We fix $n\in \N$. For all $\eps>0$, 
by the definition of $F_k$, we write
 \begin{equation}\label{esti:Fi}
 \begin{split}
\frac{\gamma_k}{2\pi}& \ln \left(\frac{ |X(t'_n)-h_k(t'_n)|} {|X(t_n)-h_k(t_n)|}\right)
=\sum_{j\neq k}^N \frac{\gamma_j}{2\pi} \ln \left( \frac{|X(t_n)-h_j(t_n)|}{|X(t'_n)-h_j(t'_n)|}\right) +\int_{t_n}^{t'_n}F'_k(\tau)\,d\tau \\
&+\varphi_\eps(t_n,X(t_n))-\varphi_\eps(t'_n,X(t'_n))
+\langle X(t_n),\dot{h}_k^\perp(t_n)\rangle-\langle X(t'_n),\dot{h}_k^\perp(t'_n)\rangle,
 \end{split}
 \end{equation}
 so by \eqref{est:phi}, Proposition~\ref{prop:Fi} and the previous estimates we get
 \begin{equation}\label{est:aout}\begin{split}
\frac{|\gamma_k|}{2\pi} \ln\left(\frac{K}{3}\right)&\leq C\Big(1+|x| + \eps\int_{t_n}^{t'_n}|X(\tau)-h_k(\tau)|^{-1}\,d\tau\Big)
 \end{split}
 \end{equation}
Letting $\eps\to 0$ for fixed $n$, we find
\begin{equation*}
 \ln \left(\frac{K}{3}\right)\leq C,
\end{equation*}
which is a contradiction for $K$ sufficiently large (depending on $x$ and on the initial conditions). So we have $t'_n=\widetilde T$, hence
 \begin{equation}\label{esti:push}
|X(t,x)-h_k(t)|<\frac{\rho}{3}\text { and } |X(t,x)-h_j(t)|\geq \frac{2\rho}{3} \text{ on } [t_n,\widetilde{T}).
\end{equation}
We have therefore localized the fluid trajectory $X(t)$ in the neighborhood of one point vortex trajectory $h_k(t)$, namely we have proved that if the trajectory goes too close to $h_{k}$, it stays in a neighborhood of radius $\rho/3$. 
We fix $n_0\in \mathbb{N}$ sufficiently large so that $|X(t_{n_0})-h_k(t_{n_0})|<\rho/K$. We come back to \eqref{esti:Fi}, replacing $t'_n$ by any $t\in [t_n, \widetilde{T})$, and we apply 
again Proposition~\ref{prop:Fi}:
\begin{equation*}
 \ln |X(t)-h_k(t)|\geq \ln |X(t_{n_0})-h_k(t_{n_0})|-C\Big(1+ |x| +\eps \int_{t_{n_0}}^t |X(\tau)-h_k(\tau)|^{-1}\,d\tau\Big).
 \end{equation*} 
Letting $\eps \to 0$ we find 
\begin{equation*}
|X(t)-h_k(t)|\geq |X(t_{n_0})-h_k(t_{n_0})|e^{-C(1+|x|)}\text { on } [t_{n_0},\widetilde{T}),
\end{equation*}
 which contradicts the fact that $\liminf_{t\to \widetilde{T}} |X(t,x)-h_k(t)|=0$.
 
Hence we conclude that $T^\ast(x)=T$ is possible.
\end{proof}

\begin{corollary}
\label{coro:ODE}
For a.e. $x\in \RR$, the map $ X(\cdot,x)$ is the unique differentiable solution on $[0,T]$ of the ODE $$\frac{d}{dt} \gamma(t)=u(t,\gamma(t))+\sum_{j=1}^N \gamma_j K(\gamma(t)-h_j(t)),\quad \gamma(0)=x,$$such that 
$\min_j \min_{[0,T]} |\gamma(t)-h_j(t)|>0.$
\end{corollary}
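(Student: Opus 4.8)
The plan is to split the statement into an existence part, which is essentially already in hand, and a uniqueness part of Osgood type. For existence: by Corollary~\ref{coro:inf-several}, for a.e. $x\in\RR$ one has $T^\ast(x)=T$, i.e. $\min_j\min_{t\in[0,T]}|X(t,x)-h_j(t)|>0$; and, as observed just before Proposition~\ref{prop:Fi}, for such $x$ the curve $t\mapsto X(t,x)$ is differentiable on $[0,T]$ and solves $\dot\gamma(t)=v(t,\gamma(t))$ with $\gamma(0)=x$, where $v$ is the field of \eqref{def:field}. Hence $X(\cdot,x)$ is a differentiable solution of the required type, and only uniqueness remains.

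For uniqueness, I would take two differentiable solutions $\gamma_1,\gamma_2$ on $[0,T]$ with $\gamma_i(0)=x$, each staying away from the point vortices, and set
\[\rho':=\min_{i\in\{1,2\}}\ \min_{1\le j\le N}\ \min_{t\in[0,T]}|\gamma_i(t)-h_j(t)|>0.\]
Since both curves take values in the region where $|z-h_j(t)|\ge\rho'$ for every $j$, it suffices to control $v(t,\gamma_1(t))-v(t,\gamma_2(t))$ using the regularity of $v$ on that region. On the one hand $u=K\ast\omega$ is log-Lipschitz uniformly in time by Proposition~\ref{prop:reg-u} (recall $\|\omega(t,\cdot)\|_{L^1\cap L^\infty}$ is bounded by \eqref{eq:norms}). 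On the other hand each singular term $z\mapsto\gamma_j K(z-h_j(t))$ is Lipschitz, with a constant depending only on $\rho'$, once restricted to points at distance $\ge\rho'$ from $h_j(t)$: for $|a-b|\le\rho'/2$ the segment $[a,b]$ stays at distance $\ge\rho'/2$ from $h_j(t)$, where $|\nabla K|\lesssim\rho'^{-2}$, while for $|a-b|>\rho'/2$ one bounds each term crudely by $(2\pi\rho')^{-1}\le 2\rho'^{-2}|a-b|$. Setting $\delta(t)^2=|\gamma_1(t)-\gamma_2(t)|^2$, which is $C^1$, these two facts give
\[\frac{d}{dt}\delta(t)^2=2\big\langle\gamma_1(t)-\gamma_2(t),\,v(t,\gamma_1(t))-v(t,\gamma_2(t))\big\rangle\le C\,\delta(t)^2\big(1+|\ln\delta(t)|\big)\]
on $[0,T]$, with $C$ depending on $\rho'$ and the data. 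Since $\delta(0)=0$ and $\int_{0^+}ds/\big(s(1+|\ln s|)\big)=+\infty$, Osgood's lemma forces $\delta\equiv0$ on $[0,T]$, i.e. $\gamma_1=\gamma_2$, which gives uniqueness.

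Given Proposition~\ref{prop:reg-u} and Corollary~\ref{coro:inf-several}, this corollary is almost immediate and there is no real obstacle; the only mild subtlety is that the two competing solutions may approach the point vortices to different extents, which is why one passes to their common minimal distance $\rho'$ and uses that the Biot-Savart kernels $K(\cdot-h_j(t))$, although singular at $h_j(t)$, are Lipschitz with a $\rho'$-dependent constant away from those points, so that the competition between the log-Lipschitz part and the genuinely Lipschitz vortex part still yields an Osgood modulus of continuity.
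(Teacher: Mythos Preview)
Your proof is correct and follows essentially the same route as the paper: existence is read off from Corollary~\ref{coro:inf-several} together with the differentiability observation before Proposition~\ref{prop:Fi}, and uniqueness comes from Osgood's lemma, combining the log-Lipschitz regularity of $u$ (Proposition~\ref{prop:reg-u}) with the fact that $K$ is Lipschitz away from the origin. The paper's own proof is a one-sentence sketch of exactly this argument; you have simply spelled out the Lipschitz estimate for the vortex kernels and the Osgood comparison more explicitly.
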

\begin{proof}
We gather the already mentioned time continuity of $u$, the log-Lipschitz space regularity for $u$ stated in Proposition~\ref{prop:reg-u}, the no collision property of Corollary~\ref{coro:inf-several}, and the fact that $K$ is Lipschitz away from the origin. Invoking Osgood's Lemma, we can then conclude.
\end{proof}

We finish this paragraph with an additional estimate on the Lagrangian trajectories, which can be derived easily from the proof of Corollary~\ref{coro:inf-several}.

\begin{proposition}\label{prop:dmin-several}
Let $\omega$ be any weak solution of \eqref{syst:1-2} on $[0,T]$ with initial datum \eqref{syst:ini-3},
where $\{h_k\}$ are given trajectories in $W^{2,\infty}([0,T])$ satisfying the no collision property \eqref{syst:4-dist}. 
There exist $0<\delta<\min(\rho/3,1)$, $0<\delta_1<1$ and $0<\delta_2<1$, depending only on $T$, $\|h_k\|_{W^{2,\infty}([0,T])}$, $\rho$, $R_0$ and $\|\omega_0\|_{L^\infty}$,
satisfying the following property: 

\medskip

Let $x\in \supp(\omega_0)$ such that $\min_j \min_{t\in [0,T]} |X(t,x)-h_j(t)|>0$.

If $|X(t_0,x)-h_k(t_0)|<\delta$ for some $t_0\in [0,T]$ and $k\in \{1,\ldots,N\}$, then 
$$\delta_1|x-h_k(0)|\leq |X(t,x)-h_k(t)|<\frac{\rho}{3},\quad \forall t\in [0,T].$$

\medskip

If $\min_j |X(t_0,x)-h_j(t_0)|>\delta$ for some $t_0\in [0,T]$, 
then $$\delta_2\leq \min_j |X(t,x)-h_j(t)|,\quad \forall t\in [0,T].$$

\end{proposition}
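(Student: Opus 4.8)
The plan is to rerun the energy computation behind Corollary~\ref{coro:inf-several}, this time keeping track of the fact that, once $x$ is restricted to $\supp(\omega_0)$, every constant involved becomes a data constant (here $C$ denotes such a constant, allowed to depend on $T$, $\|h_k\|_{W^{2,\infty}([0,T])}$, $\rho$, $R_0$, $\|\omega_0\|_{L^\infty}$ and the circulations $\gamma_j$, and to change from line to line). Indeed, for such $x$ we have $|x|\leq R_0$ and, by Proposition~\ref{prop:ineg:flot}, $\sup_{t\in[0,T]}|X(t,x)|\leq R_0+C$; hence the two boundary terms in $F_k$, namely $\varphi_\eps(t,X(t,x))$ and $\langle X(t,x),\dot h_k^\perp(t)\rangle$, are bounded by $C$ (via \eqref{est:phi} and $\|\dot h_k\|_{L^\infty}\leq C$), and the bound of Proposition~\ref{prop:Fi} reads $|F_k'(t)|\leq C\bigl(1+\eps|X(t,x)-h_k(t)|^{-1}+\sum_{j\neq k}|X(t,x)-h_j(t)|^{-1}\bigr)$. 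The $\eps$-term is harmless exactly as in Corollary~\ref{coro:inf-several}: for the $x$ under consideration the trajectory stays at positive distance from each $h_j$ on $[0,T]$, so $\eps\int_0^T|X(\tau,x)-h_j(\tau)|^{-1}\,d\tau\to 0$ as $\eps\to0$, and we pass to this limit at the end of each estimate.

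\emph{Trapping and the lower bound (first part).} Suppose $|X(t_0,x)-h_k(t_0)|<\delta$ for some $t_0$ and some $k$ (if $\gamma_k=0$ there is no singularity at $h_k$ and $v$ is Lipschitz near it, so that index is trivial and may be discarded). Let $(a,b)\ni t_0$ be the maximal subinterval of $[0,T]$ on which $|X(t,x)-h_k(t)|<\rho/3$; by the no-collision assumption \eqref{syst:4-dist}, $|X(t,x)-h_j(t)|\geq 2\rho/3$ there for every $j\neq k$, so $\sum_{j\neq k}|X-h_j|^{-1}\leq C$ on $(a,b)$. If $b<T$, then $|X(b,x)-h_k(b)|=\rho/3$ by continuity; integrating $F_k'$ from $t_0$ to $b$ as in \eqref{esti:Fi}, bounding the $j\neq k$ logarithms and the boundary terms by $C$, using the above, and then letting $\eps\to0$, we obtain $\tfrac{|\gamma_k|}{2\pi}\ln\tfrac{\rho}{3\delta}\leq C$. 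Choosing $\delta<\min(\rho/3,1)$ small enough that $\min_k\tfrac{|\gamma_k|}{2\pi}\ln\tfrac{\rho}{3\delta}>C$ yields a contradiction, so $b=T$; symmetrically $a=0$. Hence $|X(t,x)-h_k(t)|<\rho/3$ on $[0,T]$, so $|X(t,x)-h_j(t)|\geq 2\rho/3$ there for $j\neq k$. Integrating $F_k'$ now from $0$ to $t$ (with $X(0,x)=x$) and letting $\eps\to0$ gives $\bigl|\tfrac{\gamma_k}{2\pi}\ln\tfrac{|X(t,x)-h_k(t)|}{|x-h_k(0)|}\bigr|\leq C$, whence $|X(t,x)-h_k(t)|\geq\delta_1|x-h_k(0)|$ on $[0,T]$ with $\delta_1:=\min_k e^{-2\pi C/|\gamma_k|}\in(0,1)$.

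\emph{Second part.} Let $t_0$ satisfy $\min_j|X(t_0,x)-h_j(t_0)|>\delta$. If the trajectory never enters any ball $B(h_j(t),\delta)$, then $\min_j|X(t,x)-h_j(t)|\geq\delta$ for all $t$ and we are done. Otherwise there exist $t_1$ and $k$ with $|X(t_1,x)-h_k(t_1)|<\delta$; by the trapping property just established, $|X(t,x)-h_k(t)|<\rho/3$ and $|X(t,x)-h_j(t)|\geq 2\rho/3$ ($j\neq k$) on all of $[0,T]$. Running the same $F_k$-estimate between $t_0$ and an arbitrary $t$ and letting $\eps\to0$ gives $|X(t,x)-h_k(t)|\geq e^{-2\pi C/|\gamma_k|}|X(t_0,x)-h_k(t_0)|\geq\delta_1\delta$. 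In both cases $\min_j|X(t,x)-h_j(t)|\geq\delta_2$ on $[0,T]$ with $\delta_2:=\min(\delta_1\delta,2\rho/3)\in(0,1)$.

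The argument is essentially a rerun of Corollary~\ref{coro:inf-several}, so there is no new analytic difficulty and the $\eps\to0$ passage is identical. The point requiring care is the uniformity bookkeeping — checking that every constant entering the bounds on $F_k$ and $F_k'$ depends only on the admissible data, which is exactly where the restriction $x\in\supp(\omega_0)$ (hence $|x|\leq R_0$) is used — together with fixing the constants in the order $\delta$, then $\delta_1$, then $\delta_2$, so that each is a genuine data constant and the contradiction in the trapping step holds.
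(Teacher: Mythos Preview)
Your proof is correct and the first part is essentially identical to the paper's: both rerun the argument of Corollary~\ref{coro:inf-several} on a maximal interval where $|X-h_k|<\rho/3$, exploit $|x|\leq R_0$ to make all constants uniform, derive the contradiction fixing $\delta$, and then integrate $F_k'$ from $0$ to $t$ (letting $\eps\to 0$) to obtain $\delta_1$.

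For the second part you take a slightly different and more direct route than the paper. The paper introduces a new large parameter $\widetilde{K}$, considers the maximal interval containing $t_0$ on which $\min_j|X-h_j|>\delta/\widetilde{K}$, and reruns the Corollary~\ref{coro:inf-several} contradiction at the endpoint to force $\widetilde{K}\leq C$, setting $\delta_2=\delta/\widetilde{K}$. You instead observe that if the trajectory ever enters some $B(h_k(t_1),\delta)$, the trapping just proved (the first part, applied at $t_1$) pins it inside $|X-h_k|<\rho/3$ on all of $[0,T]$; one more integration of $F_k'$ from $t_0$ to $t$ then gives $|X(t)-h_k(t)|\geq \delta_1|X(t_0)-h_k(t_0)|>\delta_1\delta$, while the $j\neq k$ distances are $\geq 2\rho/3$. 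This avoids the auxiliary $\widetilde{K}$ and the second maximal-interval argument, at the cost of invoking the first part as a black box; the paper's version is more self-contained but slightly longer. Both yield the same $\delta_2$ up to harmless constants.

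One small remark: your parenthetical dismissal of the case $\gamma_k=0$ as ``trivial'' is a bit quick, since the conclusion still speaks of $|X-h_k|$; the paper does not address this case either, and in practice one simply assumes $\gamma_k\neq 0$ throughout.
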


\begin{proof}
We start with the first estimate. We come back to the proof of Corollary~\ref{coro:inf-several} above, with $t_n$ replaced by $t_0$, $t'_n$ 
replaced by $T$. With $K>3$ a sufficiently large number to be chosen, we set 
$\delta= \rho/K$. By \eqref{est:aout} we obtain, using that $|x|\leq R_0$ since $x$ belongs to $\supp(\omega_0)$:
\begin{equation*}\begin{split}
\frac{|\gamma_k|}{2\pi} \ln\left(\frac{K}{3}\right)&\leq C\Big(1+\eps\int_{t_0}^{T}|X(\tau)-h_k(\tau)|^{-1}\,d\tau\Big).
 \end{split}
 \end{equation*}
Letting $\eps\to 0$, we find a contradiction if $K$ is sufficiently large (depending only on $T$, $\|h_k\|_{W^{2,\infty}([0,T])}$, $R_0$ and $\|\omega_0\|_{L^\infty}$).
Hence by the same arguments as those leading to \eqref{esti:push} we obtain: 
$$|X(t,x)-h_k(t)|< \frac{\rho}{3},\quad |X(t,x)-h_j(t)|\geq \frac{2\rho}{3}\text{ for }j\neq k \text{ on }[t_0,T].$$ We can invoke the 
same arguments to obtain the estimates above on $[0,t_0]$. 
Therefore, by Proposition~\ref{prop:Fi}, this yields:
$$\lim_{\eps \to 0} \int_0^T|F'_k(\tau)|\,d\tau \leq C,$$
so that, using again \eqref{esti:Fi} with $t_n$ replaced by $0$ and $t'_n$ by $t$, we get
\begin{equation*}
\ln |X(t,x)-h_k(t)|\geq \ln |x-h_k(0)|- C \text { on } [0,T],
\end{equation*}
for a constant $C$, so the first part is proved by setting $\delta_1=e^{-C}$.

\medskip

We turn now to the second part. Let $\widetilde{K}\geq 1$ be a number to be determined later on.
Let $(t_1,t_2)\subset (0,T)$ containing $t_0$ and be maximal such that $\min_j |X(t,x)-h_j(t)|> \delta/\widetilde{K}$ on $(t_1,t_2)$.
If $(t_1,t_2)\neq (0,T)$, let $k\in \{1,\ldots,N\}$ such that
$|X(t_1,x)-h_k(t_1)|=\delta/\widetilde{K}$ (or $|X(t_2,x)-h_k(t_2)|=\delta/\widetilde K$).
Repeating the first part of the proof of Corollary~\ref{coro:inf-several} with $t_n=t_0$ and $t'_n=t_1$ (or $t'_n=t_2$), we find $|\ln \widetilde K|\leq C$ 
which is a contradiction provided $\widetilde K$ is sufficiently large (depending only on $T$, $\|h_k\|_{W^{2,\infty}([0,T])}$, $R_0$ and $\|\omega_0\|_{L^\infty}$). 
So, setting $\delta_2=\delta/\widetilde K$, the conclusion follows.

\end{proof}

\subsection{Decomposition of the vorticity and reduction to the case of one point vortex.}

In all this subsection we assume moreover that $\omega_0$ is constant in a neighborhood of $\{ h_{k}(0) \}$, namely that \eqref{constant} holds.
The purpose here is to show the last property of Theorem~\ref{theorem:main-several-existence-lagrangian}: the vorticity remains constant in a neighborhood of each point vortex.
To this aim, we will first reduce the problem to the case of one single point vortex. In the next subsection, we will then establish the desired property.

\medskip
 
Let $\delta$, $\delta_1$ and $\delta_2$ be the constants introduced in Proposition~\ref{prop:dmin-several}. 
 We decompose $\omega_0$ as
$$\omega_0=\sum_{k=1}^N \omega_{0,k}+\omega_{0,r},$$ where
$$\omega_{0,k}=\omega_0 \mathds{1}_{B(h_{k}(0),\delta)},\quad k=1,\ldots,N$$
and 
$$\omega_{0,r} \quad \text{is supported in }\RR\setminus \cup_{j=1}^N B( h_{j}(0),\delta).$$

By uniqueness of the weak solution to the linear transport equation associated to the field
\begin{equation*}v(t,x)=u(t,x)+\sum_{j=1}^N \gamma_j K(x-h_j(t)),\end{equation*} 
 (see the Appendix and the beginning of Subsection~\ref{subsec:lagrangian}), $\omega$ may then be decomposed as
\begin{equation*}
\omega(t,\cdot)=\sum_{k=1}^N X(t,\cdot)_ \#\omega_{0,k}+X(t,\cdot)_\#\omega_{0,r}=\sum_{k=1}^N \omega_k(t,\cdot)+\omega_{r}(t,\cdot).
\end{equation*}

Let $K_{\delta}=1/(2\pi)\nabla^\perp \ln_{\delta}$, where $\ln_\eps$ is defined in Subsection~\ref{subsec:basic}. So $K_\delta$ is a smooth, divergence-free map coinciding with $K$ on $\RR\setminus B(0,\delta)$ such that $\|K_{\delta}\|_{L^\infty}\leq C \delta^{-1}.$

\medskip

Let $k=1,\ldots,N$. By the first part of Proposition~\ref{prop:dmin-several}, by definition of $\delta$, we have
\begin{equation}\begin{split}\label{ineq:gronwall}
& |{X}(t,x)-h_k(t)|<\frac{\rho}{3}, 
\quad \forall t\in [0,T],\quad \text{ for a.e. }x \in \text{supp}(\omega_{0,k}).
\end{split}
\end{equation}
Therefore,
\begin{equation*}
\min_{j\neq k} |{X}(t,x)-h_j(t)|\geq \frac{2\rho}{3}>\delta,
\quad \forall t\in [0,T]\quad \text{ for a.e. }x \in \text{supp}(\omega_{0,k}).
\end{equation*}
So by Corollary~\ref{coro:ODE}, we have
\begin{equation}\label{eq:flot-tilde}
X(t,x)=\widetilde{X_k}(t,x), \quad \text{ for a.e. }x\in \text{supp}(\omega_{0,k}),\end{equation}
where $\widetilde{X}_k$ is the unique regular Lagrangian flow associated to the field 
\begin{equation}\label{def:drus}
\widetilde v_k(t,x)=u(t,x)+\sum_{j\neq k}\gamma_j K_{\delta}(x-h_j(t))+\gamma_k K(x-h_k(t)).
\end{equation}
In particular,
\begin{equation}\label{eq:carre-0}
\omega_k(t,\cdot)=X(t,\cdot)_\#\omega_{0,k}=\widetilde{X}_k(t,\cdot)_\#\omega_{0,k}.
\end{equation}

We observe here for later use that the same argument applied to $\omega^2$ (noting that it is also a distributional solution of \eqref{syst:1-2} with initial datum $\omega_0^2$) yields 
\begin{equation}\label{eq:carre}\omega_k^2(t,\cdot)=X(t,\cdot)_\#\omega_{0,k}^2=\widetilde{X_k}(t,\cdot)_\#\omega_{0,k}^2,\quad k=1,\ldots,N.
\end{equation}
 So we are left with the case of a linear transport equation with field $\widetilde v_k$ given by the superposition \eqref{def:drus} of a regular part 
\begin{equation}\label{def:push}
u_k(t,x)=u(t,x)+\sum_{j\neq k}\gamma_j K_{\delta}(x-h_j(t))\end{equation} and a singular part generared by only one point vortex:
$$\gamma_k K(x-h_k(t)).$$ The analysis of this case was performed in \cite{bresiliens-miot}.
It was proved in particular that for all $t\in [0,T]$, the regular Lagrangian flow $\widetilde{X}_k$ associated to $\widetilde v_k$
 is the limit in $L^1_{\loc}(\RR)$ of the sequence $\widetilde{X_{k,n}}(t,\cdot)$, where $\widetilde{X_{k,n}}$ is the flow 
associated to any regularization of $\widetilde v_k$:
$$\widetilde v_{k,n}(t,x)=u_{k,n}(t,x)+\frac{\gamma_k}{2\pi}\frac{(x-h_k(t))^\perp}{|x-h_k(t)|^2+n^{-2}},$$ with $u_{k,n}$ a smooth and divergence-free approximation of $u_k$. By Liouville's theorem, $\widetilde{X_{k,n}}(t,\cdot)$ thus preserves Lebesgue's measure. 
Moreover, Proposition~\ref{prop:ineg:flot} also applies to $\widetilde{X_{k,n}}$ (with a constant independent of $n$). Therefore, passing to the limit, we conclude that $\widetilde{X_k}(t,\cdot)$ preserves Lebesgue's measure:
\begin{equation}
\label{prop:lebesgue}
\widetilde{X_k}(t,\cdot)_\#dx=dx,\quad \forall t\in [0,T].
\end{equation}

\medskip

We next derive a localization property for $\widetilde{X_k}(t,\cdot)$.

\begin{proposition}\label{prop:loc}
For all $R>0$, there exists $C_R$ depending only on $R$, $T$, $\|h_k\|_{W^{2,\infty}([0,T])}$, $\rho$, $R_0$ and $\|\omega_0\|_{L^\infty}$, but not on $\delta_0$, such that
\begin{multline*}
C_R|x-h_k(0)|\leq |\widetilde{X_k}(t,x)-h_k(t)|,\\ 
\forall t\in [0,T],\quad \text{ for all }x\in B(0,R)\setminus\{h_{k}(0)\},\quad k=1,\ldots,N.
\end{multline*}
\end{proposition}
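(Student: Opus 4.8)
The plan is to run, for the flow $\widetilde{X_k}$ of the field $\widetilde v_k$ from \eqref{def:drus}, the same local energy argument used for $X$ in Proposition~\ref{prop:Fi} and Corollary~\ref{coro:inf-several}, which here is much simpler because $\widetilde v_k=u_k+\gamma_k K(\cdot-h_k)$ has a \emph{single} genuine singularity, at $h_k$, the remaining point vortices having been absorbed into the smooth field $u_k$ of \eqref{def:push}. First I would record that $u_k$ is a legitimate ``good'' velocity field: it is divergence free, bounded in $L^\infty$ by $\|u\|_{L^\infty}+\sum_{j\neq k}|\gamma_j|\,\|K_\delta\|_{L^\infty}\leq C$ (Proposition~\ref{prop:reg-u} and $\|K_\delta\|_{L^\infty}\leq C\delta^{-1}$, with a constant depending on $\delta$, hence on $T,\|h_k\|_{W^{2,\infty}},\rho,R_0,\|\omega_0\|_{L^\infty}$ but \emph{not} on $\delta_0$), and equals $\nabla^\perp\Psi_k$ with $\Psi_k(t,x)=\varphi(t,x)+\sum_{j\neq k}\frac{\gamma_j}{2\pi}\ln_\delta|x-h_j(t)|$. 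Regularizing only the first term, $\Psi_{k,\eps}:=\varphi_\eps+\sum_{j\neq k}\frac{\gamma_j}{2\pi}\ln_\delta|\cdot-h_j|$ satisfies $\nabla^\perp\Psi_{k,\eps}=u_k+R_\eps$ with $\|R_\eps\|_{L^\infty}\leq C\eps\|\omega_0\|_{L^\infty}$ (Proposition~\ref{prop:reste-vitesse}), $|\Psi_{k,\eps}(t,x)|\leq C\ln(2+|x|)$ as in \eqref{est:phi}, and $\|\partial_t\Psi_{k,\eps}\|_{L^\infty}\leq C$, since $\partial_t[\ln_\delta|x-h_j(t)|]$ is bounded by $C\delta^{-1}\|\dot h_j\|_{L^\infty}$ and $\partial_t\varphi_\eps$ is controlled by Proposition~\ref{prop:ineq:partialt-several} (recall $\supp\omega(t,\cdot)\subset B(0,R_T)$ by Corollary~\ref{coro:support-several}).

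\textbf{The local energy functional.}
Fix $x\in B(0,R)\setminus\{h_k(0)\}$ along which $\widetilde{X_k}(\cdot,x)$ is an absolutely continuous solution of the ODE with field $\widetilde v_k$ (a set of $x$ of full measure; the pointwise statement for every such $x$ follows since, by \cite{bresiliens-miot}, the flow of a superposition of a regular field and one point vortex is defined off the vortex position). Let $\widetilde T(x)\in(0,T]$ be the first time at which $\widetilde{X_k}(t,x)=h_k(t)$, setting $\widetilde T(x)>T$ if there is none on $[0,T]$; on $[0,\widetilde T(x))$ the trajectory avoids $h_k$, hence $\widetilde{X_k}(\cdot,x)$ is in fact $C^1$ there because $\widetilde v_k(\cdot,\widetilde{X_k}(\cdot,x))$ is continuous (continuity of $u$, see \cite{LM}). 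I then define on $[0,\widetilde T(x))$
\[
\mathcal{F}_k(t)=\frac{\gamma_k}{2\pi}\ln|\widetilde{X_k}(t,x)-h_k(t)|+\Psi_{k,\eps}(t,\widetilde{X_k}(t,x))+\langle\widetilde{X_k}(t,x),\dot{h}_k^{\perp}(t)\rangle .
\]
The computation of $\mathcal{F}_k'$ is verbatim that of Proposition~\ref{prop:Fi}, with $u$ replaced by $u_k$ and no sum over $j\neq k$ (those contributions being in $u_k$); invoking the ODE identity $\frac{\gamma_k}{2\pi}\frac{\widetilde{X_k}-h_k}{|\widetilde{X_k}-h_k|^2}=-\dot{\widetilde{X_k}}{}^{\perp}+u_k^{\perp}$ (cf.\ \eqref{eq:flot}) the singular term cancels, leaving
\[
\mathcal{F}_k'(t)=\langle\dot{\widetilde{X_k}}{}^{\perp},R_\eps\rangle-\langle u_k^{\perp},\dot h_k\rangle+\partial_t\Psi_{k,\eps}+\langle\widetilde{X_k},\ddot{h}_k^{\perp}\rangle .
\]
Since $|\widetilde{X_k}(t,x)|\leq|x|+C\leq R+C$ by Proposition~\ref{prop:ineg:flot} (applied to $\widetilde{X_k}$) and $|\dot{\widetilde{X_k}}(t,x)|\leq\|u_k\|_{L^\infty}+\tfrac{|\gamma_k|}{2\pi}|\widetilde{X_k}(t,x)-h_k(t)|^{-1}$, this gives
\[
|\mathcal{F}_k'(t)|\leq C_R\bigl(1+\eps\,|\widetilde{X_k}(t,x)-h_k(t)|^{-1}\bigr),\qquad 0\leq t<\widetilde T(x),
\]
with $C_R$ of the form allowed in the statement of the proposition.

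\textbf{Conclusion.}
Integrating $\mathcal{F}_k$ and isolating the logarithm, for $0\leq t<\widetilde T(x)$,
\begin{multline*}
\frac{\gamma_k}{2\pi}\ln|\widetilde{X_k}(t,x)-h_k(t)|=\frac{\gamma_k}{2\pi}\ln|x-h_k(0)|+\Psi_{k,\eps}(0,x)-\Psi_{k,\eps}(t,\widetilde{X_k}(t,x))\\
+\langle x,\dot{h}_k^{\perp}(0)\rangle-\langle\widetilde{X_k}(t,x),\dot{h}_k^{\perp}(t)\rangle+\int_0^t\mathcal{F}_k'(\tau)\,d\tau .
\end{multline*}
Every term on the right other than the first and the integral is bounded by $C_R$ in absolute value (using $|x|\leq R$, $|\widetilde{X_k}(t,x)|\leq R+C$, and the bounds on $\Psi_{k,\eps}$ and $\dot h_k$), while $\bigl|\int_0^t\mathcal{F}_k'\bigr|\leq C_R\bigl(1+\eps\int_0^t|\widetilde{X_k}(\tau,x)-h_k(\tau)|^{-1}\,d\tau\bigr)$, the last integral being finite for fixed $t<\widetilde T(x)$ by continuity and absence of collision. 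Letting $\eps\to0$ (the left-hand side and the leading right-hand term not depending on $\eps$) yields $\bigl|\ln|\widetilde{X_k}(t,x)-h_k(t)|-\ln|x-h_k(0)|\bigr|\leq C_R$, i.e.
\[
|\widetilde{X_k}(t,x)-h_k(t)|\geq e^{-C_R}\,|x-h_k(0)|,\qquad 0\leq t<\widetilde T(x).
\]
Finally, if $\widetilde T(x)\leq T$, letting $t\uparrow\widetilde T(x)$ and using continuity of $\widetilde{X_k}(\cdot,x)$ and $h_k$ forces $|\widetilde{X_k}(\widetilde T(x),x)-h_k(\widetilde T(x))|\geq e^{-C_R}|x-h_k(0)|>0$, contradicting that $\widetilde T(x)$ is a collision time; hence $\widetilde T(x)>T$, the bound holds on all of $[0,T]$, and this is the assertion with $C_R$ relabelled as $e^{-C_R}$.

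\textbf{Main obstacle.}
There is no real obstacle: the decisive cancellation of the singular terms in $\mathcal{F}_k'$ is exactly the one already carried out in Proposition~\ref{prop:Fi}, and ruling out collisions is a one-vortex version of Corollary~\ref{coro:inf-several}. The only care required is (i) checking that $u_k$ genuinely qualifies as a ``good'' regular part, in particular that $\|\partial_t\Psi_{k,\eps}\|_{L^\infty}\leq C$ with the constant depending on $\delta$ but not on $\delta_0$, so that $C_R$ has the claimed form; (ii) justifying the fundamental theorem of calculus for $\mathcal{F}_k$ and the passage $\eps\to0$, handled exactly as in Proposition~\ref{prop:Fi} and Corollary~\ref{coro:inf-several}; and (iii) upgrading the almost-everywhere-in-$x$ statement to every $x\in B(0,R)\setminus\{h_k(0)\}$ through the regular/single-vortex flow theory of \cite{bresiliens-miot}.
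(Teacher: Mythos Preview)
Your proposal is correct and follows essentially the same approach as the paper. Your functional $\mathcal{F}_k$ coincides with the paper's $\widetilde{F_k}$ (the paper writes $\Psi_{k,\eps}$ as $\varphi_\eps+\psi_\delta$ with $\psi_\delta(t,x)=\sum_{j\neq k}\frac{\gamma_j}{2\pi}\ln_\delta|x-h_j(t)|$), the derivative computation and the resulting bound $|\mathcal{F}_k'|\leq C(1+|x|+\eps|\widetilde{X_k}-h_k|^{-1})$ are the same, and the conclusion via integration and $\eps\to0$ is identical; you are in fact slightly more careful than the paper in justifying the application of Proposition~\ref{prop:ineg:flot} to $\widetilde{X_k}$ and in handling the ``for all $x$'' versus ``for a.e.\ $x$'' issue.
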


\begin{remark}
By \eqref{eq:flot-tilde} and by Proposition~\ref{prop:dmin-several}, we already know that this holds for a.e. $x$ in $\text{supp}(\omega_{0,k})$.
\end{remark}

\begin{proof} 
As long as $\widetilde{X_k}(t,x)\neq h_k(t)$, we introduce the new energy
\begin{equation*}
\widetilde{F_k}(t)=\frac{\gamma_k}{2\pi}\ln |\widetilde{X_k}(t,x)-h_k(t)|+\varphi_\eps(t,\widetilde{X_k}(t,x))+\psi_{\delta}(t,\widetilde{X_k}(t,x))+\langle \widetilde{X_k}(t,x), \dot{h}_k(t)^\perp\rangle,
\end{equation*}
where
$$ \psi_{\delta}(t,x)=\sum_{j\neq k}\frac{\gamma_j}{2\pi} \ln_{\delta}|x-h_j(t)|,$$
so that
 $$\nabla^\perp \psi_{\delta}(t,x)=\sum_{j\neq k}\gamma_j K_{\delta}(x-h_j(t)).$$
Exactly as in the proof of Proposition~\ref{prop:Fi}, we compute, recalling the definition \eqref{def:push} of $u_k$,
\begin{align*}
\widetilde{F_k}'(t)=& -\left\langle u_k^\perp(t,\widetilde{X_k}(t,x)),\dot{h}_k(t)\right\rangle
+\partial_t \varphi_\eps(t,\widetilde{X_k}(t,x))+\partial_t \psi_{\delta}(t,\widetilde{X_k}(t,x))\\
&+\left\langle \dot{\widetilde{X_k}}(t,x)^\perp,R_\eps(t,\widetilde{X_k}(t,x))\right\rangle
+\left\langle \widetilde{X_k}(t,x),\ddot{h}_k^\perp(t)\right\rangle.
\end{align*}
Using the uniform bounds on $u_k$, $\partial_t \psi_{\delta}$, $h_j$, $\dot{h}_j$ and $\ddot{h}_j$ for $j=1,\ldots,N$, and using the previous bounds for $\partial_t \varphi_\eps$ and $R_\eps$ we therefore get for all $\eps>0$
\begin{equation*}
|\widetilde{F_k}'(t)| \leq C\big(|x|+\eps |\widetilde{X_{k}}(t,x)-h_k(t)|^{-1}+1\big).
\end{equation*}
We may now conclude exactly as in the proof of the first part of Proposition~\ref{prop:dmin-several}: as long as $\widetilde{X_k}(t,x)\neq h_k(t)$, letting $\eps$ tend to zero after integrating the inequality above on $[0,t]$, we get
\begin{equation*}
\left|\ln \left(\frac{|\widetilde{X_k}(t,x)-h_k(t)|}{|x-h_{k,0}|}\right)\right|\leq C(1+|x|),\end{equation*}
where $C$ depends on $\delta$, $T$, $\|h_k\|_{W^{2,\infty}([0,T])}$, $\rho$ and on the initial data. So, setting 
$C_R=e^{-C(1+R)}$, the conclusion follows.
\end{proof}

\begin{proposition}\label{prop:loca-1}
We have
$$|x-h_{k}(0)|\leq C( |\widetilde{X_k}(t,x)-h_k(t)|+1),$$
where $C$ depends only on $T$, $\|h_k\|_{W^{2,\infty}([0,T])}$, $\rho$, $R_0$ and $\|\omega_0\|_{L^\infty}$, but not on $\delta_0$.
\end{proposition}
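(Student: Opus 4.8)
The plan is to argue exactly as in Proposition~\ref{prop:ineg:flot}, but by examining the trajectory $s\mapsto\widetilde{X_k}(s,x)$ run from $s=0$ and bounding its starting point $x$ from above. Note that the local energy $\widetilde{F_k}$ from the proof of Proposition~\ref{prop:loc} only yields an estimate of the form $|\ln(|\widetilde{X_k}(t,x)-h_k(t)|/|x-h_k(0)|)|\leq C(1+|x|)$, whose exponential dependence on $|x|$ is too weak to produce a \emph{linear} comparison; hence a direct geometric argument on the flow is called for.

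First I would record that the field $\widetilde v_k$ from \eqref{def:drus} is uniformly bounded away from the single curve $h_k$. Indeed $u$ is bounded by Proposition~\ref{prop:reg-u}, $\|K_\delta\|_{L^\infty}\leq C\delta^{-1}$, and $|K(y-h_k(t))|\leq 1/(2\pi)$ whenever $y\notin B(h_k(t),1)$; therefore there is $C_2$ — depending only on $T$, $\|h_k\|_{W^{2,\infty}([0,T])}$, $\rho$, $R_0$, $\|\omega_0\|_{L^\infty}$ (through $\delta$), but not on $\delta_0$ — with $|\widetilde v_k(t,y)|\leq C_2$ for all $t\in[0,T]$ and all $y\notin B(h_k(t),1)$. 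Set also $C_1:=\|h_k\|_{L^\infty([0,T])}+1$, so that $B(h_k(s),1)\subset B(0,C_1)$ for every $s\in[0,T]$.

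Then, for a.e.\ $x$ — more precisely for every $x\neq h_k(0)$, where by Proposition~\ref{prop:loc} and Corollary~\ref{coro:ODE} the curve $Z(s):=\widetilde{X_k}(s,x)$ is a genuine solution of $\dot Z(s)=\widetilde v_k(s,Z(s))$ on $[0,T]$ staying away from $h_k$ — I would bound $|x|=|Z(0)|$ as follows. If $|Z(s)-h_k(s)|>1$ for all $s\in[0,t]$, then $|\dot Z|\leq C_2$ on $[0,t]$, so $|x|\leq|Z(t)|+C_2T$. Otherwise let $s_0\in[0,t]$ be the first time with $|Z(s_0)-h_k(s_0)|\leq 1$; on $[0,s_0)$ one still has $|\dot Z|\leq C_2$, so $|x|\leq|Z(s_0)|+C_2T\leq C_1+C_2T$ since $Z(s_0)\in B(0,C_1)$. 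In either case $|x|\leq\max(|\widetilde{X_k}(t,x)|,C_1)+C_2T\leq|\widetilde{X_k}(t,x)|+C$, whence
\begin{equation*}
|x-h_k(0)|\leq|x|+\|h_k\|_{L^\infty}\leq|\widetilde{X_k}(t,x)|+C\leq|\widetilde{X_k}(t,x)-h_k(t)|+C,
\end{equation*}
and the stated form $|x-h_k(0)|\leq C(|\widetilde{X_k}(t,x)-h_k(t)|+1)$ follows by taking $C\geq 1$.

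I do not expect a genuine obstacle here: the only points requiring care are to invoke Proposition~\ref{prop:loc} so that the curve $\widetilde{X_k}(\cdot,x)$ is an honest ODE solution on all of $[0,T]$ for a.e.\ $x$ (which legitimizes the speed bound $|\dot Z|\leq C_2$ away from $B(h_k(t),1)$), and to keep checking that every constant — notably $C_2$, through $\delta$ — stays independent of $\delta_0$. The mechanism is simply that the velocity of the flow $\widetilde{X_k}$ is large only near the \emph{single} curve $h_k$, which is confined to the fixed ball $B(0,C_1)$, so a trajectory cannot travel far in time $T$ without having visited that ball.
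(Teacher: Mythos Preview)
Your argument is correct, but the paper takes a different and slightly slicker route. Rather than splitting into cases according to whether the trajectory visits the tube $B(h_k(s),1)$, the paper differentiates $|\widetilde{X_k}(t,x)-h_k(t)|^2$ directly and uses the key orthogonality $\langle y, K(y)\rangle=0$ of the point-vortex kernel: the singular term $\gamma_k K(\widetilde{X_k}-h_k)$ contributes nothing to the radial derivative, leaving only the bounded field $u_k-\dot h_k$. This yields $\frac{d}{dt}|\widetilde{X_k}(t,x)-h_k(t)|\geq -C$ immediately, hence $|x-h_k(0)|\leq |\widetilde{X_k}(t,x)-h_k(t)|+CT$ by integration.

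Your approach, patterned on Proposition~\ref{prop:ineg:flot}, avoids any computation with the vortex kernel by instead exploiting that the singularity is spatially confined to a fixed bounded region; what you lose is the single-line derivation, and the constant you produce depends on $\delta$ through the bound $\|K_\delta\|_{L^\infty}\leq C\delta^{-1}$, which is harmless here but less sharp. One minor quibble: you do not actually need to invoke Proposition~\ref{prop:loc} or Corollary~\ref{coro:ODE} to justify the speed bound, since the regular Lagrangian flow already gives an absolutely continuous solution of the ODE for a.e.\ $x$, and your argument only uses $|\dot Z|\leq C_2$ on the interval where $Z$ stays outside $B(h_k(s),1)$.
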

\begin{proof} 
\begin{align*}
\frac{d}{dt} |\widetilde{X_k}&(t,x)-h_k(t)|^2\\
&=2\left\langle \widetilde{X_k}(t,x)-h_k(t), u_k(t,\widetilde{X_k}(t,x))+\gamma_k K(\widetilde{X_k}(t,x)-h_k(t))-\dot{h}_k(t)\right\rangle\\
&=2\left\langle \widetilde{X_k}(t,x)-h_k(t), u_k(t,\widetilde{X_k}(t,x))-\dot{h}_k(t)\right\rangle\\
&\geq -2|\widetilde{X_k}(t,x)-h_k(t)|(\|\dot{h}_k\|_{L^\infty}+\|u_k\|_{L^\infty}),
\end{align*}
hence 
\begin{equation*}
\frac{d}{dt} |\widetilde{X_k}(t,x)-h_k(t)| \geq -C,
\end{equation*}
so the conclusion follows.

\end{proof}

\subsection{The vorticity remains constant in the neighborhood of the point vortices}
We finally establish that the vorticity remains constant in a neighborhood of the point vortices.
Let $C$ be the constant of Proposition~\ref{prop:loca-1}. We set 
$$R=2C+|h_{k}(0)|,$$
and we consider the corresponding constant $C_R$ of Proposition~\ref{prop:loc}.
We may decrease $\delta_0$ so that
$$\max(\delta_0,C_R \delta_0)<\min\left(\delta,\delta_2,1,\frac{2\rho}{3}\right),$$
where we recall $\delta$ and $\delta_2$ were found in Proposition~\ref{prop:dmin-several}.

We fix $t\in [0,T]$.
We claim that
\begin{equation}\label{claim:vanish}
 \omega_j(t,y)=0,\quad \text{for a.e. } y\in B(h_k(t),C_R\delta_0),\quad \forall j\neq k.\end{equation}
Indeed, by \eqref{eq:carre}, considering the $L^1$ function $\varphi_k=\mathds{1}_{B(h_{k}(t),C_R\delta_0)},$ we find 
\begin{equation*}
 \int_{\RR}\omega_j^2(t,y)\varphi_k(y)\,dy
 =\int_{\RR}\omega_{0,j}^2(x)\varphi_k({X}(t,x))\,dx.
 \end{equation*}
On the other hand, for $x\in \text{supp}(\omega_{0,j})$, we have by \eqref{ineq:gronwall} $|{X}(t,x)-h_j(t)|<\rho/3$ and therefore $|{X}(t,x)-h_k(t)|>2\rho /3>C_R\delta_0$. So the right hand side above vanishes, which establishes \eqref{claim:vanish}.

Using that $\delta_2>C_R\delta_0$, by the same arguments as above, the second part of Proposition~\ref{prop:dmin-several} yields that
\begin{equation}\label{claim:vanish-2}
 \omega_r(t,y)=0,\quad \text{for a.e. } y\in \bigcup_{k=1}^N B(h_k(t),C_R\delta_0).\end{equation}

\medskip

Finally, we show that
\begin{equation}\label{claim:vanish-3}
 \omega_k(t,y)=\alpha_k,\quad \text{for a.e. } y\in B(h_k(t),C_R\delta_0).\end{equation}

Indeed, since $\omega_k(t,\cdot)=\widetilde{X_k}(t,\cdot)_\#\omega_{0,k}$, $\omega_k^2(t,\cdot)=\widetilde{X_k}(t,\cdot)_\#\omega_{0,k}^2$, and $\widetilde{X_k}(t,\cdot)_\#dx=dx$ by \eqref{eq:carre-0}, \eqref{eq:carre} and \eqref{prop:lebesgue}, we compute 
\begin{align*}
 \int_{\RR}(\omega_k(t,y)-\alpha_k)^2\varphi_k(y)\,dy \hspace{-2cm}&\\
=&\int_{\RR}\omega_k(t,y)^2\varphi_k(y)\,dy-2\alpha_k \int_{\RR}\omega_k(t,y)\varphi_k(y)\,dy+\alpha_k^2\int_{\RR}\varphi_k(y)\,dy\\
 =&\int_{\RR}\omega_{0,k}^2(x)\varphi_k(\widetilde{X_k}(t,x))\,dx-2\alpha_k \int_{\RR}\omega_{0,k}(x)\varphi_k(\widetilde{X_k}(t,x))\,dx\\
&+\alpha_k^2\int_{\RR}\varphi_{k}(\widetilde{X_k}(t,x))\,dx\\
=&\int_{\RR}(\omega_{0,k}(x)-\alpha_k)^2\varphi_{k}(\widetilde{X_k}(t,x))\,dx\\
=&\int_{\widetilde{X_k}(t,\cdot)^{-1}(B(h_k(t),C_R\delta_0))}(\omega_{0,k}(x)-\alpha_k)^2\,dx.
 \end{align*}
Now we observe that since $C_R\delta_0<1$, by Proposition~\ref{prop:loca-1}, we get 
$$\widetilde{X_k}(t,\cdot)^{-1}\Big(B(h_k(t),C_R\delta_0) \Big)\subset B(h_{k}(0),2C)\subset B(0,R).$$ 
Thus, we are allowed to use Proposition~\ref{prop:loc}, and we have for $x\in \widetilde{X_k}(t,\cdot)^{-1}(B(h_k(t),C_R\delta_0))$: 
$$C_R|x-h_{k}(0)|\leq |\widetilde{X_k}(t,x)-h_k(t)|\leq C_R\delta_0.$$
We get therefore
\begin{equation*}
\int_{\RR}(\omega(t,y)-\alpha_k)^2\varphi_k(y)\,dy
 \leq \int_{B(h_{k}(0),\delta_0)}(\omega_0(x)-\alpha_k)^2\,dx=0,
 \end{equation*}
 and the conclusion follows.

\medskip
In view of \eqref{claim:vanish}, \eqref{claim:vanish-2} and \eqref{claim:vanish-3}, we finally conclude that
\begin{equation}
\label{om:constant}
\omega(t,\cdot)=\alpha_k,\quad \text{a.e. on } B(h_k(t), C_R\delta_0).
\end{equation}

\section{Proof of Theorem~\ref{theorem:main-several}.}

\label{sec:uniqueness-final}

\textbf{Step 1: uniqueness in the case of one point vortex.}

We start with the case $N=1$.
Let $(\omega,h)$ and $(\widetilde{\omega},\widetilde{h})$ two solutions of \eqref{syst:1} with initial datum $(\omega_0,h_0,\ell_0)$ satisfying 
the assumption of Theorem~\ref{theorem:main-several}. So Theorem~\ref{theorem:main-several-existence-lagrangian} holds for both solutions: $\omega$ and $\widetilde{\omega}$ remain constant in a neighborhood of the trajectories of
$h$ and $\tilde{h}$.

 Noting that $u-\widetilde{u}=K\ast(\omega-\widetilde{\omega})$ with $\int (\omega-\widetilde{\omega})=0$ and $\omega,\widetilde{\omega}$ compactly supported, we have $u-\widetilde{u}\in L^2(\R^2)$ (see \cite[Proposition 3.3]{majda-bertozzi}) and we may consider the quantity
\begin{equation*}
D(t)=\|u(t,\cdot)-\widetilde{u}(t,\cdot)\|_{L^2}^2+|h(t)-\widetilde{h}(t)|^2+|\dot{h}(t)-\dot{\widetilde{h}}(t)|^2,\quad t\in [0,T].
\end{equation*}
In what follows we establish a Gronwall inequality for $D(t)$.

We remark that the only difference between \eqref{syst:1} and the vortex-wave system \eqref{syst:wv} is the ODE for the point vortex, 
since the PDE for the vorticity is the same. Thus we may directly use the estimates derived for \eqref{syst:wv} in \cite[Subsection 3.4]{LM} for the quantity $\|u(t,\cdot)-\widetilde{u}(t,\cdot)\|_{L^2}^2$. More precisely, by the estimate (3.9) in \cite{LM} we have for $t\in [0,T^\ast)$ and for all $p\geq 2$
\begin{equation*}
\|u(t,\cdot)-\widetilde{u}(t,\cdot)\|_{L^2}^2\leq C\int_0^t \left( r(\tau)+\sqrt{r(\tau)}f(\sqrt{r(\tau)})+p\:r(\tau)^{1-1/p}\right)\,d\tau,
\end{equation*}
where $$r(t)=\|u(t,\cdot)-\widetilde{u}(t,\cdot)\|_{L^2}^2+|h(t)-\widetilde{h}(t)|^2,$$ and where 
$$f(\tau)=\tau |\ln \tau|.$$ Here, $T^\ast\in [0,T]$ is the largest time such that 
$|h(t)-\widetilde{h}(t)|<\min(1,\delta/2)$ on $[0,T^\ast)$. So using that $r(t)\leq D(t)$, and the inequalities
$\tau f(\tau)\leq f(\tau^2)$, $\tau\leq f(\tau)$ for $\tau\leq 1$ and $f(\tau)\leq p\tau ^{1-1/p}$ (for all $p\geq 2$), we get for $t\in [0,T^\ast)$ and for all $p\geq 2$
\begin{equation}
\label{ineq:vitesse}
\|u(t,\cdot)-\widetilde{u}(t,\cdot)\|_{L^2}^2\leq C\,p\int_0^t D(\tau)^{1-1/p}\,d\tau.
\end{equation}
We emphasize that the property obtained in Theorem~\ref{theorem:main-several-existence-lagrangian} is crucial in order to obtain the previous estimate, 
by implying in particular that $u-\widetilde{u}$ is harmonic in the neighborhood of $h$ and $\tilde{h}$.

We turn next to the estimate for the point vortices. We compute
\begin{align*}
\frac{d}{dt}|h-\widetilde{h}|^2&+\frac{d}{dt}|\dot{h}-\dot{\widetilde{h}}|^2\\
=&2\langle h-\widetilde{h}, \dot{h}-\dot{\widetilde{h}}\rangle
-2\frac{\gamma}{m}\langle \dot{h}-\dot{\widetilde{h}},u(t,h)^\perp-\widetilde{u}(t,\widetilde{h})^\perp\rangle\\
\leq& D(t)+2\frac{\gamma}{m}\sqrt{D(t)}|u(t,h)-u(t,\widetilde{h})|+2\frac{\gamma}{m}\sqrt{D(t)}|(u-\widetilde{u})(t,\widetilde{h})|.
\end{align*}
On the one hand, since $u$ is log-Lipschitz we have $|u(t,h)-u(t,\widetilde{h})|\leq C f(|h-\tilde{h}|)\leq Cf(\sqrt{D(t)})$. On the other hand, 
exactly as in Step 2 in the proof of \cite[Proposition 3.10]{LM}, we rely on \cite[Lemma 3.9]{LM}: using the analyticity of $u-\widetilde{u}$ near $h$ and $\widetilde{h}$, 
that lemma enables to obtain
$$|u(t,h)-\widetilde{u}(t,h)|\leq C \|u(t,\cdot)-\widetilde{u}(t,\cdot)\|_{L^2}.$$
Hence we get finally that for all $p\geq 2$,
\begin{equation}\label{ineq:vortex}
\frac{d}{dt}|h-\widetilde{h}|^2+\frac{d}{dt}|\dot{h}-\dot{\widetilde{h}}|^2
\leq C f(D(t))\leq CpD(t)^{1-1/p},\quad \forall t\in [0,T^\ast).
\end{equation}

Finally, gathering \eqref{ineq:vitesse} and \eqref{ineq:vortex}, we find 
$$D(t)\leq C\,p \int_0^t D(\tau)^{1-1/p}\,d\tau,\quad \forall p\geq 2.$$ So we conclude by usual arguments (see \cite[Chapter 8]{majda-bertozzi} that $D\equiv 0$ on $[0,T^\ast)$. Thus by definition of $T^\ast$ we get $T^\ast=T$ and uniqueness follows on $[0,T]$.

\medskip

\textbf{Step 2: Proof of Theorem~\ref{theorem:main-several} completed}

Once the case of one point is settled, the conclusion of Theorem~\ref{theorem:main-several} follows easily by adapting the proof above to the case of several points, using \eqref{om:constant}, \eqref{ineg:d-5} and \eqref{ineg:d-6}. We refer also to the proof of uniqueness in \cite[Theorem 2.1, Chapter 2]{MiotThesis} dealing with several points.

\label{sec:final-proof}

\section{Some additional properties}

We prove in this section some additional properties for System~\eqref{syst:1} in the case where the circulations and the vorticity have positive sign.

\begin{proposition}\label{prop:energy-cst}
Let $\omega_0$ and $(\{h_{k,0}\},\{\ell_{k,0}\})$ be as in \eqref{syst:2} and let $(\omega,\{h_k\})$ be any corresponding weak solution to \eqref{syst:1} on $[0,T]$. The following quantities are conserved:
\begin{itemize}
\item The energy,
\begin{align*}
\mathcal{H}_0=&\frac{1}{2\pi}\int_{\RR} \int_{\RR}\ln |x-y|\omega(t,y)\omega(t,x)\,dx\,dy+\frac{1}{\pi}\sum_{k=1}^N \gamma_k \int_{\RR}\ln |x-h_k(t)|\omega(t,x)\,dx\\
&+\sum_{j\neq k}\frac{\gamma_k \gamma_j}{2\pi}\ln |h_k(t)-h_j(t)|-\sum_{k=1}^N m_k |\dot{h}_k(t)|^2.
\end{align*}
\item The momentum,
\begin{equation*}
\mathcal{I}_0=\int_{\RR} |x|^2\omega(t,x)\,dx+\sum_{k=1}^N \gamma_k |h_k(t)|^2-2\sum_{k=1}^N m_k h_k(t)^\perp \cdot \dot{h}_k(t).
\end{equation*}
\end{itemize}

\end{proposition}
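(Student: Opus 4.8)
The plan is to prove conservation of each quantity by differentiating it in time and checking that all surviving terms cancel algebraically, the whole content being a sequence of exact cancellations between the fluid, the interaction and the ODE contributions. The only delicate point is that the logarithmic kernels appearing in $\mathcal{H}_0$ are not admissible test functions in the weak formulation of the PDE in \eqref{syst:1}, so I would first regularise them exactly as in Subsection~\ref{subsec:basic} (replacing $\ln$ by $\ln_\eps$, hence $\varphi$ by $\varphi_\eps$) and then let $\eps\to 0$; I expect this regularisation, with the bookkeeping of the $O(\eps)$ errors, to be the main obstacle, everything else being a routine if lengthy computation. Concretely, write $\mathcal{H}_0=E_1+E_2+E_3+E_4$ with $E_1(t)=\int_{\RR}\varphi\,\omega=\frac1{2\pi}\iint\ln|x-y|\,\omega\omega$, $E_2(t)=2\sum_k\gamma_k\varphi(t,h_k(t))$, $E_3(t)=\sum_{j\neq k}\frac{\gamma_k\gamma_j}{2\pi}\ln|h_k-h_j|$, $E_4(t)=-\sum_k m_k|\dot h_k|^2$, and let $\mathcal{H}_0^\eps=E_1^\eps+E_2^\eps+E_3+E_4$ be obtained by replacing $\ln$ by $\ln_\eps$ (so $E_3,E_4$ are unchanged for $\eps<\rho$, where $\rho>0$ is a lower bound on $[0,T]$ for $|h_k-h_p|$, which exists by Theorem~\ref{theorem:main-several-existence}, and $E_2^\eps=2\sum_k\gamma_k\varphi_\eps(t,h_k)$). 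Since $\omega(t,\cdot)$ is compactly supported uniformly on $[0,T]$ (Corollary~\ref{coro:support-several}) and $|\ln_\eps r|\leq|\ln r|+C$, dominated convergence gives $\mathcal{H}_0^\eps\to\mathcal{H}_0$ on $[0,T]$, so it suffices to show $\frac{d}{dt}\mathcal{H}_0^\eps=O(\eps)$ uniformly on $[0,T]$.

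I would then differentiate, using the weak formulation for the $\omega$-dependence (the time regularity coming from the weak continuity of $\omega$, Proposition~\ref{prop:reg-u} and $h_k\in C^2$) and the ODEs in \eqref{syst:1} for the $h_k$-dependence. By symmetry $\frac{d}{dt}E_1^\eps=2\int\nabla\varphi_\eps\cdot v\,\omega$; writing $\nabla\varphi_\eps=-(u+R_\eps)^\perp$ (Proposition~\ref{prop:reste-vitesse}) and $v=u+\sum_k\gamma_k K(\cdot-h_k)$, the $u^\perp\cdot u$ term vanishes identically, the $R_\eps$ terms are $O(\eps)$, and $u^\perp\cdot K(\cdot-h_k)=\frac1{2\pi}\frac{u\cdot(\cdot-h_k)}{|\cdot-h_k|^2}$, so $\frac{d}{dt}E_1^\eps=-\frac1\pi\sum_k\gamma_k\int\frac{(x-h_k)\cdot u}{|x-h_k|^2}\omega\,dx+O(\eps)$. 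Next $\frac{d}{dt}\varphi_\eps(t,h_k)=\partial_t\varphi_\eps(t,h_k)+\dot h_k\cdot\nabla\varphi_\eps(t,h_k)$, whence (using again $\nabla\varphi_\eps=-(u+R_\eps)^\perp$) $\frac{d}{dt}E_2^\eps=2\sum_k\gamma_k\partial_t\varphi_\eps(t,h_k)-2\sum_k\gamma_k\dot h_k\cdot u^\perp(t,h_k)+O(\eps)$. Exchanging $j\leftrightarrow k$ gives $\frac{d}{dt}E_3=2\sum_{j\neq k}\frac{\gamma_k\gamma_j}{2\pi}\frac{(h_k-h_j)\cdot\dot h_k}{|h_k-h_j|^2}$, and substituting $m_k\ddot h_k=\gamma_k(\dot h_k-u(t,h_k)-\sum_{j\neq k}\gamma_j K(h_k-h_j))^\perp$ into $\frac{d}{dt}E_4=-2\sum_k m_k\dot h_k\cdot\ddot h_k$, and using $\dot h_k\cdot\dot h_k^\perp=0$, $a^\perp\cdot b^\perp=a\cdot b$ and $K^\perp(z)=-\frac1{2\pi}\frac{z}{|z|^2}$, yields $\frac{d}{dt}E_4=2\sum_k\gamma_k\dot h_k\cdot u^\perp(t,h_k)-2\sum_{j\neq k}\frac{\gamma_k\gamma_j}{2\pi}\frac{(h_k-h_j)\cdot\dot h_k}{|h_k-h_j|^2}$. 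Adding the four, the terms $\pm2\sum_k\gamma_k\dot h_k\cdot u^\perp(t,h_k)$ and $\pm2\sum_{j\neq k}\frac{\gamma_k\gamma_j}{2\pi}\frac{(h_k-h_j)\cdot\dot h_k}{|h_k-h_j|^2}$ cancel, leaving $\frac{d}{dt}\mathcal{H}_0^\eps=-\frac1\pi\sum_k\gamma_k\int\frac{(x-h_k)\cdot u}{|x-h_k|^2}\omega\,dx+2\sum_k\gamma_k\partial_t\varphi_\eps(t,h_k)+O(\eps)$.

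The decisive step — handled exactly as in Proposition~\ref{prop:ineq:partialt-several} — is to pass to the limit in $2\sum_k\gamma_k\partial_t\varphi_\eps(t,h_k)=\frac1\pi\sum_k\gamma_k\int\ln_\eps'(|h_k-y|)\frac{y-h_k}{|y-h_k|}\cdot v(t,y)\,\omega(t,y)\,dy$: in $v$, the singular part $\gamma_k K(\cdot-h_k)$ contributes \emph{identically zero} because $\frac{y-h_k}{|y-h_k|}\cdot(y-h_k)^\perp=0$, so near $h_k$ the relevant field is $u+\sum_{j\neq k}\gamma_j K(\cdot-h_j)$, which is bounded there, and $\ln_\eps'(r)\frac{\cdot}{r}$ may be replaced by $\frac{\cdot}{r^2}$ up to an $O(\eps)$ error; hence $2\sum_k\gamma_k\partial_t\varphi_\eps(t,h_k)=\frac1\pi\sum_k\gamma_k\int\frac{y-h_k}{|y-h_k|^2}\cdot\big(u+\sum_{j\neq k}\gamma_j K(\cdot-h_j)\big)\omega\,dy+O(\eps)$. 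The $u$-part cancels the remaining $E_1^\eps$-term, and the double sum $\frac1{2\pi^2}\sum_{k\neq j}\gamma_k\gamma_j\int\frac{(y-h_k)\cdot(y-h_j)^\perp}{|y-h_k|^2|y-h_j|^2}\omega\,dy$ vanishes by antisymmetry under $(k,j)\mapsto(j,k)$ (using $a\cdot b^\perp=-b\cdot a^\perp$). Therefore $\frac{d}{dt}\mathcal{H}_0^\eps=O(\eps)$ uniformly on $[0,T]$, all constants being controlled by $\|\omega\|_{L^1\cap L^\infty}$, $\|u\|_\infty$ (Proposition~\ref{prop:reg-u}), $\rho$ and $\|h_k\|_{W^{2,\infty}}$; integrating on $[0,t]$ and letting $\eps\to0$ gives $\mathcal{H}_0(t)=\mathcal{H}_0(0)$.

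For the momentum no regularisation is needed, since $|x|^2$ is smooth, $\omega$ is compactly supported, and the $h_k$ are distinct. Write $\mathcal{I}_0=I_1+I_2+I_3$ with $I_1=\int|x|^2\omega$, $I_2=\sum_k\gamma_k|h_k|^2$, $I_3=-2\sum_k m_k h_k^\perp\cdot\dot h_k$. The weak formulation gives $\frac{d}{dt}I_1=2\int x\cdot v\,\omega=2\int x\cdot u\,\omega+2\sum_k\gamma_k\int x\cdot K(x-h_k)\omega$; symmetrising in $(x,y)$, $\int x\cdot u\,\omega=\frac1{4\pi}\iint\frac{(x-y)\cdot(x-y)^\perp}{|x-y|^2}\omega\omega=0$, while $x\cdot K(x-h_k)=h_k\cdot K(x-h_k)$ (as $(x-h_k)\cdot(x-h_k)^\perp=0$) and $\int K(x-h_k)\omega\,dx=-u(t,h_k)$ since $K$ is odd, so $\frac{d}{dt}I_1=-2\sum_k\gamma_k h_k\cdot u(t,h_k)$. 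Also $\frac{d}{dt}I_2=2\sum_k\gamma_k h_k\cdot\dot h_k$ and $\frac{d}{dt}I_3=-2\sum_k m_k h_k^\perp\cdot\ddot h_k$ (using $\dot h_k^\perp\cdot\dot h_k=0$); substituting the ODE and using $a^\perp\cdot b^\perp=a\cdot b$ gives $h_k^\perp\cdot m_k\ddot h_k=\gamma_k h_k\cdot\big(\dot h_k-u(t,h_k)-\sum_{j\neq k}\gamma_j K(h_k-h_j)\big)$, whence $\frac{d}{dt}I_3=-2\sum_k\gamma_k h_k\cdot\dot h_k+2\sum_k\gamma_k h_k\cdot u(t,h_k)+2\sum_{k\neq j}\gamma_k\gamma_j h_k\cdot K(h_k-h_j)$. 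Summing, the $h_k\cdot u(t,h_k)$ terms and the $h_k\cdot\dot h_k$ terms cancel, and $\sum_{k\neq j}\gamma_k\gamma_j h_k\cdot K(h_k-h_j)=\frac12\sum_{k\neq j}\gamma_k\gamma_j(h_k-h_j)\cdot K(h_k-h_j)=0$ by the $(k,j)\mapsto(j,k)$ symmetry together with $K$ odd and $(h_k-h_j)\cdot(h_k-h_j)^\perp=0$. Hence $\frac{d}{dt}\mathcal{I}_0\equiv0$ and $\mathcal{I}_0$ is conserved.
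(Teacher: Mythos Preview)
Your argument is correct and follows essentially the same strategy as the paper's (sketched) proof: regularise the logarithms via $\ln_\eps$, show $|\mathcal{H}_0-\mathcal{H}_0^\eps|\leq C\eps$ and $|\frac{d}{dt}\mathcal{H}_0^\eps|\leq C\eps$, then let $\eps\to 0$; for the momentum, differentiate directly and check the cancellations. You have in fact filled in the detailed cancellations that the paper only alludes to. One very minor remark: for $\mathcal{I}_0$ you say no regularisation is needed because $|x|^2$ is smooth, but $|x|^2$ is not compactly supported, so strictly speaking a spatial cutoff (removable since $\omega(t,\cdot)$ is uniformly compactly supported by Corollary~\ref{coro:support-several}) is still required to justify the use of the weak formulation; this is routine and the paper's sketch omits it as well.
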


\begin{proof}(sketch)
For $\eps<\frac{1}{3}\min_{j\neq k}\min_{t\in [0,T]}|h_j(t)-h_k(t)|$, we replace $\ln$ by the smooth function $\ln_\eps$ defined in the first section and we set $\varphi_\eps=\frac1{2\pi}\ln_\eps\ast \omega$ as in \eqref{def:stram-2}, so that, setting
\begin{align*}
\mathcal{H}_\eps=&\int_{\RR} \varphi_\eps(t,x)\omega(t,x)\,dx+2\sum_{k=1}^N \gamma_k \varphi_\eps(t,h_k(t))\\
&+\sum_{j\neq k}\frac{\gamma_j \gamma_k}{2\pi}\ln_\eps |h_j(t)-h_k(t)|-\sum_{k=1}^N m_k |\dot{h}_k(t)|^2,
\end{align*}
we have $\sup_{ [0,T]}|\mathcal{H}_0-\mathcal{H}_\eps|\leq C\eps$, with the quantity $C$ depending only on $\|\omega\|_{L^\infty}$, $\|h_k\|_{L^\infty}$, $m_k$, $\gamma_k$ etc.

It suffices then to compute the time derivative of $\mathcal{H}_\eps$ using the weak formulation for $\omega$ and the ODE for the $h_k's$, which yields
$\sup_{[0,T]}|\dot{\mathcal{H}}_\eps|\leq C\eps$. Letting $\eps$ tend to zero, the conclusion follows.

For $\mathcal{I}_0$ we compute directly the time derivative using the weak formulation for $\omega$ and the ODE for the $h_k's$ and we show that it vanishes, which yields the result.
\end{proof}

With these conservations, we can prove that the massive point vortices are confined if $\omega$ and $\{\gamma_{k}\}$ have the same sign.
\begin{corollary}\label{coro:dmin}
Assume moreover that $$\omega_0\geq 0,\text{ a.e. on }\RR,\quad \gamma_k>0,\: k=1,\ldots,N.$$
Let $(\omega,\{h_k\})$ be any corresponding weak solution to \eqref{syst:1} on $[0,T]$. Then
there exists $C>0$ and $d>0$, depending only on $\mathcal{H}_0$, $\mathcal{I}_0$, $m_k$, $\gamma_k$ and $\|\omega_0\|_{L^\infty}$ and $R_0$, but not on $T$, such that
\begin{equation*}
\sup_{t\in [0,T]}\left(|\dot{h}_k(t)|^2+|{h}_k(t)|^2\right)\leq C
\end{equation*}
and
\begin{equation*}
\inf_{t\in [0,T]} \min_{j\neq k}|h_j(t)-h_k(t)|\geq d.
\end{equation*} 
\end{corollary}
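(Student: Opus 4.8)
The plan is to exploit the two conserved quantities $\mathcal{H}_0$ and $\mathcal{I}_0$ of Proposition~\ref{prop:energy-cst}, using crucially that under the hypotheses $\omega_0\geq 0$ and $\gamma_k>0$ every term carries a favorable sign. Since $\omega_0\geq 0$, the transport property of Theorem~\ref{theorem:main-several-existence-lagrangian} gives $\omega(t,\cdot)=X(t,\cdot)_\#\omega_0\geq 0$ for all $t$, while $\|\omega(t,\cdot)\|_{L^1}=\|\omega_0\|_{L^1}$ and $\|\omega(t,\cdot)\|_{L^\infty}=\|\omega_0\|_{L^\infty}$ by \eqref{eq:norms} (with $\|\omega_0\|_{L^1}\leq C(\|\omega_0\|_{L^\infty},R_0)$ since $\omega_0$ is supported in $B(0,R_0)$), and $\omega(t,\cdot)$ stays compactly supported so that the quantities below are finite. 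The argument then decouples the fluid, the positions $h_k$ and the velocities $\dot h_k$ by pure sign-chasing, which makes all constants independent of $T$.

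\textbf{Step 1: bounds on $|\dot h_k|$, $|h_k|$ and $\int|x|^2\omega$.}
I would first bound from above the fluid--fluid and fluid--vortex interaction terms of $\mathcal{H}_0$. Using $|x-y|\leq|x|+|y|$, $|x-h_k|\leq|x|+|h_k|$, Cauchy--Schwarz, $\|\omega\|_{L^1}=\|\omega_0\|_{L^1}$, and — for the vortex--vortex term, which has a positive coefficient — the elementary bound $\ln r\leq\ln^+ r:=\max(\ln r,0)\leq\varepsilon r^2+C_\varepsilon$, one gets, for any $\varepsilon>0$,
\begin{equation*}
\frac{1}{2\pi}\iint\ln|x-y|\,\omega(x)\,\omega(y)\,dx\,dy+\frac1\pi\sum_k\gamma_k\int\ln|x-h_k|\,\omega(x)\,dx+\sum_{j\neq k}\frac{\gamma_k\gamma_j}{2\pi}\ln|h_k-h_j|\leq\varepsilon C_1\Big(\int|x|^2\omega+\sum_k|h_k|^2\Big)+C_\varepsilon,
\end{equation*}
with $C_1,C_\varepsilon$ independent of $T$. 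Since $\mathcal{H}_0$ is conserved this gives $\sum_k m_k|\dot h_k|^2\leq\varepsilon C_1\bigl(\int|x|^2\omega+\sum_k|h_k|^2\bigr)+C_\varepsilon$. On the other hand, conservation of $\mathcal{I}_0$, the bound $2m_kh_k^\perp\cdot\dot h_k\leq\frac{\gamma_k}{2}|h_k|^2+\frac{2m_k^2}{\gamma_k}|\dot h_k|^2$, and $\int|x|^2\omega\geq 0$ together yield
\begin{equation*}
\int|x|^2\omega+\sum_k|h_k|^2\leq C_2\Big(|\mathcal{I}_0|+\sum_k|\dot h_k|^2\Big).
\end{equation*}
Combining the two inequalities and using $\sum_k m_k|\dot h_k|^2\geq(\min_k m_k)\sum_k|\dot h_k|^2$, I would choose $\varepsilon$ so small that $\varepsilon C_1C_2\leq\frac12\min_k m_k$; absorbing the velocity term then gives $\sum_k|\dot h_k|^2\leq C$, hence also $\int|x|^2\omega+\sum_k|h_k|^2\leq C$, with $C$ depending only on $\mathcal{H}_0,\mathcal{I}_0,m_k,\gamma_k,\|\omega_0\|_{L^\infty},R_0$. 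This is the first estimate.

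\textbf{Step 2: lower bound on the distances.}
I would rewrite the conservation of $\mathcal{H}_0$ by isolating the vortex--vortex sum:
\begin{equation*}
\sum_{j\neq k}\frac{\gamma_k\gamma_j}{2\pi}\ln|h_k-h_j|=\mathcal{H}_0+\sum_k m_k|\dot h_k|^2-\frac{1}{2\pi}\iint\ln|x-y|\,\omega(x)\,\omega(y)\,dx\,dy-\frac1\pi\sum_k\gamma_k\int\ln|x-h_k|\,\omega(x)\,dx.
\end{equation*}
On the right-hand side $\sum_k m_k|\dot h_k|^2\geq 0$, while the two remaining integrals are bounded above by $C$ by exactly the estimate of Step~1, now using $\int|x|^2\omega+\sum_k|h_k|^2\leq C$; hence the vortex--vortex sum is bounded below by $-C$. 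Since every coefficient $\gamma_k\gamma_j$ is positive and each individual term $\frac{\gamma_k\gamma_j}{2\pi}\ln|h_k-h_j|\leq\frac{\gamma_k\gamma_j}{2\pi}\ln(1+|h_k|+|h_j|)$ is bounded above by $C$ (positions being bounded by Step~1), isolating a single pair shows $\frac{\gamma_k\gamma_j}{2\pi}\ln|h_k-h_j|\geq-C$ for every $j\neq k$, hence $\ln|h_k-h_j|\geq-C$ and finally $|h_k-h_j|\geq e^{-C}=:d>0$, which is the second estimate.

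\textbf{Main obstacle.}
The only genuinely delicate point is the coupling in Step~1: neither $\mathcal{H}_0$ nor $\mathcal{I}_0$ alone controls $|\dot h_k|$, and the two estimates feed a quadratic velocity term back into one another, so the argument closes only because that term's coefficient can be made arbitrarily small via $\varepsilon$ relative to $\min_k m_k$. Everything else is sign-chasing based on $\omega\geq 0$, $\gamma_k>0$, the conservation laws, and $\ln^+ r\leq\varepsilon r^2+C_\varepsilon$; in particular the dynamics of the $h_k$'s enters only through Proposition~\ref{prop:energy-cst}, which is why the constants do not depend on $T$.
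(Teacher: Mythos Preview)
Your proof is correct and follows the same overall strategy as the paper: combine the conservation of $\mathcal{H}_0$ and $\mathcal{I}_0$ with the sign conditions $\omega\geq 0$, $\gamma_k>0$ to first bound $|\dot h_k|$, $|h_k|$ and $\int|x|^2\omega$ uniformly in $t$, and then feed these bounds back into $\mathcal{H}_0$ to extract a lower bound on each $\ln|h_j-h_k|$.

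The only noteworthy difference is how the coupled pair of inequalities in Step~1 is closed. The paper uses the cruder bound $\ln|x-y|\leq |x|+|y|$ together with Cauchy--Schwarz to obtain
\[
\sum_k m_k|\dot h_k|^2\leq -\mathcal{H}_0+C\Big(\int|x|^2\omega+\sum_k\gamma_k|h_k|^2\Big)^{1/2},
\]
and then, after inserting this into the estimate coming from $\mathcal{I}_0$, arrives at a self--referential inequality of the form $A\leq \mathcal{I}_0+C(1+A)^{3/4}$ for $A=\int|x|^2\omega+\sum_k\gamma_k|h_k|^2$, which is closed by the sublinearity of the exponent $3/4$. You instead use $\ln^+ r\leq \varepsilon r^2+C_\varepsilon$ to make the coupling \emph{linear} with an arbitrarily small coefficient and then absorb against $\min_k m_k$. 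Both devices are standard and equally effective; yours is arguably a bit more transparent since the absorption is explicit, while the paper's avoids introducing the auxiliary parameter $\varepsilon$. In either case Step~2 is identical.
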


\begin{proof}
Since $\omega$ is transported by the flow, we have $\omega(t,\cdot)\geq 0$ almost everywhere for $t\in [0,T]$. Similarly to the proof of Proposition~\ref{prop:n}, picking $m\neq n$, we have, using that $\ln(|x-y|)\leq |x|+|y|$,
\begin{multline*}
\frac{\gamma_m \gamma_n}{2\pi}\ln |h_m(t)-h_n(t)|\geq \mathcal{H}_0-\frac{1}{2\pi}\int_{\RR}\int_{\RR}
\big(|x|+|y|\big)\omega(t,y)\omega(t,x)\,dx\,dy\\
-\frac{1}{\pi}\sum_{k=1}^N\gamma_k\int_{\RR}\big(|x|+|h_k(t)|\big)\omega(t,x)\,dx
-\sum_{j\neq k}\frac{\gamma_k \gamma_j}{2\pi}\big( |h_k(t)|+|h_j(t)|\big)
\end{multline*}
therefore by Cauchy-Schwartz inequality:
\begin{equation}\label{ineq:appendix-1}
\frac{\gamma_m \gamma_n}{2\pi}\ln |h_m(t)-h_n(t)|
 \geq \mathcal{H}_0-C\left(\int_{\RR} |x|^2\omega(t,x)\,dx+\sum_{k=1}^N \gamma_k |h_k(t)|^2\right)^{1/2},
\end{equation}
where $C$ depends only on $\|\omega(t,\cdot)\|_{L^1}=\|\omega_0\|_{L^1}$ and on $\gamma_k$.

By the same estimates we also obtain
\begin{equation}\label{ineq:appendix-2}
\sum_{k=1}^N m_k |\dot{h}_k(t)|^2\leq - \mathcal{H}_0+C\left(\int_{\RR} |x|^2\omega(t,x)\,dx+\sum_{k=1}^N \gamma_k |h_k(t)|^2\right)^{1/2}.
\end{equation}

On the other hand, by Cauchy-Schwartz inequality:
\begin{align*}
&\int_{\RR} |x|^2\omega(t,x)\,dx+\sum_{k=1}^N \gamma_k |h_k(t)|^2
\leq \mathcal{I}_0+C\left(\sum_{k=1}^N m_k |\dot{h}_k(t)|^2\right)^{1/2}\left(\sum_{k=1}^N \gamma_k |{h}_k(t)|^2\right)^{1/2}\\
\leq& \mathcal{I}_0 +C\left(-\mathcal{H}_0+C\left(\int_{\RR} |x|^2\omega(t,x)\,dx+\sum_{k=1}^N \gamma_k |h_k(t)|^2\right)^{1/2}\right)^{1/2}\left(\sum_{k=1}^N \gamma_k |{h}_k(t)|^2\right)^{1/2}\\
\leq& \mathcal{I}_0 +C\left(1+\int_{\RR} |x|^2\omega(t,x)\,dx+\sum_{k=1}^N \gamma_k |h_k(t)|^2\right)^{3/4}
\end{align*}
where we have used \eqref{ineq:appendix-2}. We conclude that
\begin{equation*}
\int_{\RR} |x|^2\omega(t,x)\,dx+\sum_{k=1}^N \gamma_k |h_k(t)|^2\leq C
\end{equation*}
with $C$ depending only $\mathcal{I}_0, \mathcal{H}_0$ and $\|\omega_0\|_{L^1}.$ Coming back to \eqref{ineq:appendix-1} and \eqref{ineq:appendix-2}, the conclusion follows.
\end{proof}

\appendix
 \section{Some results included in \cite{MiotThesis}}

In this Appendix we gather several results from \cite[Chapter 1]{MiotThesis}. Since that reference is in french we provide here the statements in english and refer to \cite{MiotThesis} for the proofs. Similar results and proofs in the case of one point vortex are also to be found in \cite{LM}.

\begin{lemma}
 \label{appendix:1}Let $\{h_k\}$ be $N$ Lipschitz trajectories on $[0,T]$ without collisions: 
\begin{equation*}
\min_{t\in [0,T]} \min_{k\neq p} |h_{k}(t)-h_{p}(t)|\geq \rho
\end{equation*}
for some $\rho>0$.
Let $\omega$ be a weak solution of the PDE
\begin{equation}
\label{eq:appendix}
\partial_t \omega+\div(v \omega)=0\quad \text{on }[0,T],
\end{equation}where $v$ is the divergence-free velocity field given by
\begin{equation}\label{def:field-appendix}
v(t,x)=u(t,x)+\sum_{j=1}^N \gamma_j K(x-h_j(t)).
\end{equation} 
with $u$ a divergence-free vector field satisfying
\begin{equation}\label{regu:field-appendix}
u\in L^\infty([0,T]\times \RR)\quad \text{ and }u(t,\cdot)\text{ is log-Lipschitz uniformly in time}.
\end{equation}
Let $\beta:\R \rightarrow
\R$ be $C^1$ such that
\begin{equation*}
|\beta'(z)|\leq C(1+ |z|^p),\qquad \forall z\in \R,
\end{equation*}
for some $p\geq 0$. Then for all test function $\psi \in
C_c^\infty([0,T] \times \RR)$, we have
\begin{equation*}
\frac{d}{dt}\int_{\RR} \psi \beta(\omega)\,dx=\int_{\RR} \beta(\omega)
(\partial_t \psi +v\cdot \nabla \psi)\,dx \:\: \mathrm{in }\:
L^1([0,T]).
\end{equation*}
\end{lemma}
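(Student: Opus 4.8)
The plan is to establish this renormalization property by the regularization-and-commutator scheme of DiPerna--Lions, adapted to the two distinct defects of regularity of the field \eqref{def:field-appendix}: the merely log-Lipschitz transport part $u$ and the explicit point-vortex singularities $\gamma_j K(\cdot-h_j(t))$. First I would mollify in space: let $\rho_\eps$ be a standard mollifier and set $\omega_\eps(t,\cdot)=\omega(t,\cdot)\ast\rho_\eps$. Convolving \eqref{eq:appendix} with $\rho_\eps$ gives
\[
\partial_t\omega_\eps+v\cdot\nabla\omega_\eps=r_\eps,\qquad r_\eps:=v\cdot\nabla\omega_\eps-\big(v\cdot\nabla\omega\big)\ast\rho_\eps,
\]
and, using $\div v=0$, the commutator can be rewritten as $r_\eps(t,x)=\int_{\RR}\nabla\rho_\eps(x-y)\cdot\big(v(t,x)-v(t,y)\big)\omega(t,y)\,dy$. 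Since $\omega_\eps$ is smooth in $x$, the chain rule applies and $\partial_t\beta(\omega_\eps)+v\cdot\nabla\beta(\omega_\eps)=\beta'(\omega_\eps)\,r_\eps$ holds classically, hence weakly against any $\psi\in C_c^\infty([0,T]\times\RR)$. The whole matter then reduces to proving that $r_\eps\to 0$ in $L^1_{\loc}([0,T]\times\RR)$; granting this, one passes to the limit in the weak formulation for $\beta(\omega_\eps)$ --- using that $\omega$ is bounded and integrable, the polynomial bound on $\beta'$, the convergence $\omega_\eps\to\omega$ in $L^p_{\loc}$ for every finite $p$, and $v\in L^1_{\loc}$ --- to obtain the renormalized equation in $\mathcal D'([0,T]\times\RR)$, which is exactly the claimed identity, read after testing against $\psi$ as a $W^{1,1}(0,T)$ statement for $t\mapsto\int_{\RR}\psi\,\beta(\omega)\,dx$.

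To handle $r_\eps$ I would split $v=u+\sum_{j=1}^N\gamma_jK(\cdot-h_j)$, hence $r_\eps=r_\eps^{u}+\sum_{j=1}^N r_\eps^{j}$. For $r_\eps^{u}$ I would invoke the commutator estimate for a log-Lipschitz transport field, which is the borderline case for which such an estimate is still available and which, by the uniform-in-time log-Lipschitz assumption \eqref{regu:field-appendix}, comes with a bound uniform in $t$; the details are as in \cite{LM, MiotThesis}. For each $r_\eps^{j}$ I would localize in space around the moving point $h_j$: on $\{|x-h_j(t)|>\delta\}$ the field $K(\cdot-h_j(t))$ is smooth and divergence-free, so $r_\eps^{j}\to 0$ there by the standard argument, and it remains to control the contribution of an $O(\eps)$-neighborhood of the curve $t\mapsto h_j(t)$. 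On that neighborhood I would exploit the explicit structure of $K$ --- divergence-free, positively homogeneous of degree $-1$, and orthogonal to the radial direction, $K(z)\cdot z=0$ --- to show that, although $\nabla K\notin L^1_{\loc}$, the commutator still decays in $L^1$ there; equivalently, near $h_j$ one compares the dynamics with the exact flow of $\gamma_jK(\cdot-h_j)$, which is explicit, rotational and area-preserving and for which compatibility of the push-forward with $\beta$ is immediate, and one absorbs the correction produced by $u$ and by the remaining $K(\cdot-h_p)$, $p\neq j$, which are bounded and Lipschitz on that neighborhood thanks to the no-collision hypothesis on the $\{h_k\}$.

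I expect the main obstacle to be precisely this last point: the commutator attached to $\gamma_jK(\cdot-h_j(t))$ inside an $O(\eps)$-neighborhood of the point vortex, where the field fails to lie in $W^{1,1}_{\loc}$ and a crude estimate on $r_\eps^{j}$ only produces $L^1$-boundedness, not decay. Turning this into decay requires carefully tracking the cancellations coming from $\div K=0$ and from the homogeneity of $K$, together with the fact that $\omega$ stays in $L^1\cap L^\infty$ uniformly in time, so that the singular kernel is integrated against a bounded density that does not concentrate on the trajectory. Once $r_\eps\to0$ in $L^1_{\loc}$ is secured, the remainder is routine, and the time regularity --- absolute continuity of $t\mapsto\int_{\RR}\psi\,\beta(\omega)\,dx$, with the stated $L^1([0,T])$ derivative --- is then read off from the renormalized PDE. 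The complete argument is carried out in \cite[Chapter 1]{MiotThesis}, and in the single-vortex case in \cite{LM}.
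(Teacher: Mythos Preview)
Your proposal is essentially correct and matches the paper's own treatment: the paper does not give a proof either, but simply refers to \cite[Chapter~1, Lemme~1.5]{MiotThesis} (and \cite{LM} for one point), noting that the argument there adapts verbatim since the precise dynamics of the $h_k$ is never used. Your DiPerna--Lions commutator scheme with the splitting $r_\eps=r_\eps^{u}+\sum_j r_\eps^{j}$ is exactly the framework of those references.

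One technical remark is worth making. For the singular contributions $r_\eps^{j}$, you propose to show directly that the commutator decays in $L^1$ on an $O(\eps)$-neighborhood of $h_j(t)$, relying on cancellations from homogeneity and $K(z)\cdot z=0$. In \cite{LM,MiotThesis} the order of operations is reversed: one first introduces a radial cutoff $\chi_\eta(t,x)=\chi(|x-h_j(t)|/\eta)$, applies the standard DiPerna--Lions argument on $\{|x-h_j(t)|>\eta\}$ (where $K(\cdot-h_j)$ is smooth and $u\in W^{1,p}_{\loc}$ for all finite $p$ by Calder\'on--Zygmund, since in practice $u=K\ast\omega$), and only then sends $\eta\to 0$. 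The boundary term produced by the cutoff is $\int \beta(\omega)\,v\cdot\nabla\chi_\eta\,\psi\,dx$, and the crucial observation---precisely the orthogonality $K(x-h_j)\cdot\nabla\chi_\eta=0$ you identified---kills the dangerous part, while the remaining contributions from $u$, from $\dot h_j\cdot\nabla\chi_\eta$ (arising from $\partial_t\chi_\eta$), and from the other vortices $K(\cdot-h_p)$, $p\neq j$ (bounded near $h_j$ by the no-collision hypothesis), are $O(\eta)$ since $\omega\in L^\infty$. This cutoff-first route is slightly cleaner than trying to make $r_\eps^{j}\to 0$ in $L^1$ across the singularity, where a crude bound indeed gives only $O(1)$ and extracting decay is delicate. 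Your ``equivalently'' clause (straightening by the exact rotational flow of $\gamma_j K(\cdot-h_j)$) is a valid alternative that also ultimately exploits the same orthogonality.
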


This lemma is stated in \cite[Chapter 1, Lemme 1.5]{MiotThesis} in the case where $(\omega, \{h_k\})$ is a weak solution of the vortex-wave system. However a straightforward adaptation of the proof shows that this holds for the linear transport equation \eqref{eq:appendix} with any vector field $v$ given by the decomposition \eqref{def:field-appendix}, where $u$ satisfies the regularity properties \eqref{regu:field-appendix} and where the $h_j'$ are Lipschitz continuous on $[0,T]$ and do not intersect. We emphasize that their precise dynamics is not used to show the renormalization property. 

\medskip

As a consequence of Lemma~\ref{appendix:1} it is observed in \cite[Chapter 1, Remarque 1.3]{MiotThesis} (or in \cite[Lemma 3.2]{LM} for the case of one point) that 
\begin{corollary}\label{coro:appendix} Under the same assumption as in Lemma~\ref{appendix:1} for the $\{h_k\}$, let $\omega$ be a weak solution of the 
PDE \eqref{eq:appendix}. Then for all $1\leq p \leq +\infty$ we have $\|\omega(t,\cdot)\|_{L^p}=\|\omega(0,\cdot)\|_{L^p}.$ In particular, uniqueness of the weak 
solution holds.
\end{corollary}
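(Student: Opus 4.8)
The plan is to derive both assertions from the renormalization property provided by Lemma~\ref{appendix:1}, following the classical DiPerna--Lions scheme \cite{dip-lions} adapted to this mildly singular field.

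\textbf{Step 1: conservation of the $L^p$ norms.} Fix first $p\in(1,\infty)$ and apply Lemma~\ref{appendix:1} with $\beta(z)=|z|^p$, which is $C^1$ on $\R$ with $\beta'(z)=p\,\mathrm{sgn}(z)|z|^{p-1}$, hence $|\beta'(z)|\leq C(1+|z|^p)$. Let $\chi\in C_c^\infty(\RR)$ satisfy $0\leq\chi\leq 1$, $\chi\equiv 1$ on $B(0,1)$ and $\supp\chi\subset B(0,2)$, and set $\psi_R(x)=\chi(x/R)$, so that $\|\nabla\psi_R\|_{L^\infty}\leq C/R$ and $\nabla\psi_R$ is supported in the annulus $A_R=B(0,2R)\setminus B(0,R)$. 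Then Lemma~\ref{appendix:1} yields, in $L^1([0,T])$,
\begin{equation*}
\frac{d}{dt}\int_{\RR}\psi_R\,|\omega(t,\cdot)|^p\,dx=\int_{\RR}|\omega(t,\cdot)|^p\,v(t,\cdot)\cdot\nabla\psi_R\,dx.
\end{equation*}
For $R$ large enough, $A_R$ is disjoint from every ball $B(h_j(t),1)$, $t\in[0,T]$, hence on $A_R$ the field $v=u+\sum_j\gamma_jK(\cdot-h_j)$ is bounded uniformly in time: $u$ is bounded by \eqref{regu:field-appendix}, and each $|K(x-h_j(t))|\leq C/R$ there since the trajectories stay bounded on $[0,T]$. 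Consequently the right-hand side is bounded by $\frac{C}{R}\int_{A_R}|\omega(t,\cdot)|^p\,dx\leq \frac{C}{R}\|\omega\|_{L^\infty([0,T]\times\RR)}^{p-1}\|\omega\|_{L^\infty([0,T];L^1)}$. Integrating in time over $[0,t]$ and letting $R\to+\infty$, the flux term goes to $0$ while monotone convergence handles the two remaining terms, giving $\|\omega(t,\cdot)\|_{L^p}=\|\omega(0,\cdot)\|_{L^p}$. For $p=1$ I would run the same computation with $\beta_\delta(z)=\sqrt{z^2+\delta^2}-\delta$, which is smooth with $|\beta_\delta'|\leq 1$, and then let $\delta\to 0$; for $p=+\infty$ I would pass to the limit $p\to+\infty$ in the identity just obtained, using that $\|f\|_{L^p}\to\|f\|_{L^\infty}$ for every $f\in L^1\cap L^\infty(\RR)$.

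\textbf{Step 2: uniqueness.} Let $\omega^{(1)},\omega^{(2)}\in L^\infty([0,T];L^1\cap L^\infty(\RR))$ be two weak solutions of \eqref{eq:appendix} with the same initial datum. Since \eqref{eq:appendix} is linear in the unknown for the fixed field $v$, the difference $w=\omega^{(1)}-\omega^{(2)}$ is again a weak solution in $L^\infty([0,T];L^1\cap L^\infty(\RR))$, now with $w(0,\cdot)=0$. Applying Step~1 with $p=2$ to $w$ (the function $\beta(z)=z^2$ clearly satisfies the growth assumption) gives $\|w(t,\cdot)\|_{L^2}=\|w(0,\cdot)\|_{L^2}=0$ for all $t\in[0,T]$, hence $\omega^{(1)}=\omega^{(2)}$.

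\textbf{Main obstacle.} The only delicate point is the passage $R\to+\infty$ in Step~1: one must check that, although $v$ is singular at the moving points $h_j(t)$, it stays bounded uniformly in $t$ on the far annuli $A_R$, and that $|\omega(t,\cdot)|^p$ is integrable there with a norm controlled independently of $t$, so that the boundary flux is $O(1/R)$. Both facts follow at once from $\omega\in L^\infty([0,T];L^1\cap L^\infty)$ together with \eqref{regu:field-appendix} and the boundedness of the trajectories; once they are in hand the rest is the standard renormalization bookkeeping, which is why the statement is only quoted from \cite{MiotThesis,LM}.
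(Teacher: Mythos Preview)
Your proof is correct and follows the standard DiPerna--Lions renormalization scheme, which is precisely the route taken in the references \cite{MiotThesis,LM} that the paper defers to for this corollary (the paper itself provides no proof). The far-field cutoff argument, the endpoint treatments $p=1$ via $\beta_\delta$ and $p=\infty$ via $p\to\infty$, and the linear-difference step for uniqueness are all handled cleanly.
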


\def\cprime{$'$}

\end{document}